\theoremstyle:=definition,remark,plain\do{%
        \expandafter\g@addto@macro\csname th@\theoremstyle\endcsname{%
            \addtolength\thm@preskip\parskip
            }%
        }
\declaretheorem[name=Theorem,numberwithin=section]{thm}
\declaretheorem[name=Proposition,numberlike=thm]{prop}
\declaretheorem[name=Lemma,numberlike=thm]{lemma}
\declaretheorem[name=Corollary,numberlike=thm]{cor}
\declaretheorem[name=Definition,style=definition,qed=$\blacktriangle$,numberlike=thm]{defn}
\declaretheorem[name=Example,style=definition,qed=$\blacktriangle$,numberlike=thm]{ex}
\declaretheorem[name=Remark,style=definition,qed=$\blacktriangle$,numberlike=thm]{rmk}
\newcommand{\bu}{\bullet}
\newcommand{\dd}{\mathrm{d}}
\newcommand{\cc}{\mathrm{c}}
\newcommand{\cop}[1]{{#1}{}^{\cc}}
\newcommand{\dc}{\cop{\dd}}
\newcommand{\ddc}{\dd \dc}
\newcommand{\imag}{\mathrm{Im}}
\newcommand{\G}{\mathrm{G}_2}
\newcommand{\GL}[1]{\mathrm{GL}(#1)}
\newcommand{\U}[1]{\mathrm{U}(#1)}
\newcommand{\R}{\mathbb R}
\newcommand{\C}{\mathbb C}
\newcommand{\Z}{\mathbb Z}
\newcommand{\del}{\partial}
\newcommand{\delbar}{\overline{\partial}}
\newcommand{\DR}{\mathrm{dR}}
\newcommand{\BC}{\mathrm{BC}}
\newcommand{\im}{\operatorname{im}}
\newcommand{\coker}{\operatorname{coker}}
\newcommand{\End}{\operatorname{End}}
\let\c@equation\c@thm
\numberwithin{equation}{section}
\begin{document}

\title{Cohomologies on almost complex manifolds \\ and the $\partial \bar{\partial}$-lemma}

\author{Ki Fung Chan \\ {\it Chinese University of Hong Kong} \\\tt{kifung@link.cuhk.edu.hk} \and Spiro Karigiannis \\ {\it Department of Pure Mathematics, University of Waterloo} \\ \tt{karigiannis@uwaterloo.ca} \and Chi Cheuk Tsang \\ {\it Chinese University of Hong Kong} \\ \tt{1155062770@link.cuhk.edu.hk} }

\maketitle

\begin{abstract}
We study cohomologies on an almost complex manifold $(M, J)$, defined using the Nijenhuis-Lie derivations $\mathcal{L}_J$ and $\mathcal{L}_N$ induced from the almost complex structure $J$ and its Nijenhuis tensor $N$, regarded as vector-valued forms on $M$. 

We show how one of these, the $N$-cohomology $H^{\bu}_N (M)$, can be used to distinguish non-isomorphic non-integrable almost complex structures on $M$. Another one, the $J$-cohomology $H^{\bu}_J (M)$, is familiar in the integrable case but we extend its definition and applicability to the case of non-integrable almost complex structures. The $J$-cohomology encodes whether a complex manifold satisfies the $\partial \bar{\partial}$-lemma, and more generally in the non-integrable case the $J$-cohomology encodes whether $(M, J)$ satisfies the $\dd \mathcal{L}_J$-lemma, which we introduce and motivate in this paper. We discuss several explicit examples in detail, including a non-integrable example.

We also show that $H^k_J$ is finite-dimensional for compact integrable $(M, J)$, and use spectral sequences to establish partial results on the finite-dimensionality of $H^k_J$ in the compact non-integrable case.

\smallskip

\noindent \textcolor{red}{Changes made after publication: Theorem 3.23 was originally misstated. Also, we have made minor changes to notation in Theorem 3.32. The authors had originally unwisely chosen (without stating it explicitly) to define $H^{k-1}( \im \mathcal{L}_J) = \big( (\ker \dd )^k \cap (\im \mathcal{L}_J)^k \big) / (\im \dd)^k$. This was done to make the map of complexes $\Omega^{\bu} \to (\im \mathcal{L}_J)^{\bu}$ degree preserving. However this choice was unnecessary and caused unintended confusion. We have now sensibly defined $H^k( \im \mathcal{L}_J) = \big( (\ker \dd )^k \cap (\im \mathcal{L}_J)^k \big) / (\im \dd)^k$. All corrections appear in red. The authors thank Scott Wilson for alerting them to these errors.}
\end{abstract}

\tableofcontents

\section{Introduction} \label{sec:intro}

Given an almost complex manifold $(M, J)$, we obtain two Nijenhuis-Lie derivations $\mathcal{L}_J$ and $\mathcal{L}_N$ induced from the almost complex structure $J$ and its Nijenhuis tensor $N$, regarded as vector-valued forms. The motivation for the present paper is the study of the derivations $\mathcal{L}_J$ and $\mathcal{L}_N$. We define cohomologies on $(M, J)$ using these operators, and consider applications of these cohomologies. One of these, the $J$-cohomology $H^{\bullet}_J (M)$ is already familiar in complex (integrable $J$) geometry, but we extend its definition and applicability to the non-integrable case. We emphasize being able to explicitly compute these cohomologies with several particular examples.

The study in the present paper is a natural extension of the general study of the use of derivations to understand $\GL{m, \C}$-structures and $\U{m}$-structures initiated in~\cite{dKS} by the second author together with X. de la Ossa and E.E. Svanes. Although what we call the ``$J$-cohomology'' in the present paper has been considered before in the case of complex (integrable $J$) geometry, our approach is more general and emphasizes a different point of view, namely that of Nijenhuis-Lie derivations. Our main new result is an extension to the non-integrable case of both the definition of the $J$-cohomology and the establishment of its corresponding relation (Theorem~\ref{thm:main2}) to a generalization of the $\partial \bar \partial$-lemma, which we call the $\dd \mathcal{L}_J$-lemma. We discuss the relations to previously known results in Section~\ref{sec:del-delbar}.

The paper is organized as follows. In Section~\ref{sec:first} we review the notion of derivations on the space of differential forms, and specialize to the case of an almost complex manifold $(M, J)$. We define three cohomologies on $(M, J)$ and establish some of their basic properties. We also present an explicit example to demonstrate that the ``$N$-cohomology'' can be used to distinguish non-isomorphic non-integrable almost complex structures. In Section~\ref{sec:Jcohom} we focus on the ``$J$-cohomology'' and many of its properties, including: its finite-dimensionality in certain cases; its relation to the $\partial \bar \partial$-lemma in the integrable case, and more generally to the $\dd \mathcal{L}_J$-lemma in the non-integrable case; and several explicit examples. In Section~\ref{sec:future} we discuss some natural questions for future study.

{\bf Remark.} What we call the $J$-cohomology and denote by $H^{\bu}_J$, is not related to the groups $H^{(p,q)}_J$ defined in~\cite[Section 2.1]{Angella}, which play no role in the present paper. Our notation for $J$-cohomology should therefore not cause any confusion.

{\bf Notation and conventions.} We use $H^k_{\DR} (M)$ to denote the $k^{\text{th}}$ de Rham cohomology (with real coefficients) of the manifold $M$, and write just $H^k_{\DR}$ when $M$ is understood. We make similar notational abbreviations for the other cohomologies introduced in this paper.

We use $C^{\bu}$ to denote a $\Z$-graded complex of real vector spaces. A degree $k$ map $P$ of the complex $C^{\bu}$ maps $C^i$ into $C^{i+k}$, and we write
\begin{align*}
(\ker P)^i & = \ker (P : C^i \to C^{i+k}), \\
(\im P)^i & = \im (P : C^{i-k} \to C^i).
\end{align*}

{\bf Acknowledgments.} These results were obtained in 2017 as part of the collaboration between the COSINE program organized by the Chinese University of Hong Kong and the URA program organized by the University of Waterloo. The authors thank both universities for this opportunity. Part of the writing was done while the second author held a Fields Research Fellowship at the Fields Institute. The second author thanks the Fields Institute for their hospitality.\\

\section{Derivations and cohomologies} \label{sec:first}

In this section we review the notion of \emph{derivations} on the space of forms on a smooth manifold, and define the canonical derivations associated to an almost complex manifold $(M, J)$. These derivations will be used to define cohomologies in Section~\ref{sec:cohom}.

\subsection{Derivations on almost complex manifolds} \label{sec:deriv}

For a smooth manifold $M^n$, let $\Omega^k(M) = \Gamma(\Lambda^k (T^*M))$ be the space of $k$-forms on $M$ and let $\Omega^k(M, TM) = \Gamma(\Lambda^k (T^* M) \otimes TM)$ be the space of vector-valued $k$-forms on $M$. Given an element $K \in \Omega^k (M, TM)$, it induces two derivations on the graded algebra $\Omega^{\bu}(M) = \oplus_{j=0}^n \Omega^j (M)$ of all forms. They are the \emph{algebraic derivation} $\iota_K$, which is of degree $k-1$, and the \emph{Nijenhuis-Lie derivation} $\mathcal{L}_K$, which is of degree $k$. These derivations are defined as follows. Let $\{ e_1, \ldots, e_n \}$ be a local frame for $TM$ over an open subset $U$ of $M$ with dual local coframe $\{ e^1, \ldots, e^n \}$ for $T^* M$. Then $K = K^j e_j$ where each $K^j$ is a $k$-form on $U$.

The map $\iota_K : \Omega^i (M) \to \Omega^{i+k-1} (M)$ is defined as
\begin{equation} \label{eq:alg-derivation}
\iota_K \alpha = K^j \wedge (\iota_{e_j} \alpha),
\end{equation}
where $\iota_{e_j}$ is the usual interior product with a vector field. It is easy to see $\iota_K$ is well-defined and is a derivation on $\Omega^{\bu} (M)$. In particular, $\iota_K$ vanishes on functions, so $\iota_K (h \alpha) = h (\iota_K \alpha)$ for any $h \in \Omega^0 (M)$ and $\alpha \in \Omega^{i} (M)$, which is why $\iota_K$ is called algebraic. Moreover, if $\alpha \in \Omega^1 (M)$, one can show that
\begin{equation} \label{eq:alg-derivation1}
(\iota_K \alpha) (X_1, \ldots, X_k) = \alpha( K(X_1, \ldots, X_k) ).
\end{equation}

The map $\mathcal{L}_K : \Omega^i (M) \to \Omega^{i+k} (M)$ is defined as
\begin{equation} \label{eq:Lie-derivation}
\mathcal{L}_K \alpha =  \iota_K (\dd \alpha) - (-1)^{k-1} \dd (\iota_K \alpha) = [ \iota_K, \dd ] \alpha.
\end{equation}
That is, $\mathcal{L}_K$ is the \emph{graded commutator} of $\iota_K$ and the exterior derivative $\dd$. In fact, $\dd$ itself is a Nijenhuis--Lie derivation, as $\dd = \mathcal{L}_I$, where $I = e^j e_j$ is the identity operator in $\Omega^1 (M, T M) = \Gamma(\End TM)$. Moreover, the linear map $K \mapsto \mathcal{L}_K$ is an injective map from $\Omega^k (M, TM)$ into the linear space of degree $k$ derivations of $\Omega^{\bu} (M)$. The graded Jacobi identity on the space of graded linear operators on $\Omega^{\bu} (M)$ and the fact that $\dd^2 = 0$ implies that
\begin{equation} \label{eq:commdL}
[ \dd, \mathcal{L}_K ] = \dd \mathcal{L}_K - (-1)^k \mathcal{L}_K \dd = 0.
\end{equation}

Also important for us will be the \emph{Fr\"olicher-Nijenhuis bracket}
\begin{equation*}
\{ \cdot, \cdot \} : \Omega^k (M, TM) \times \Omega^l (M, TM) \to \Omega^{k+l} (M, TM)
\end{equation*}
on vector-valued forms which can be defined by
\begin{equation} \label{eq:FNdefn}
\mathcal{L}_{\{ K, L \}} = [ \mathcal{L}_K, \mathcal{L}_L ] = \mathcal{L}_K \mathcal{L}_L - (-1)^{kl} \mathcal{L}_L \mathcal{L}_K
\end{equation}
for any $K \in \Omega^k (M, TM)$ and $L \in \Omega^l (M, TM)$. That is, the Nijenhuis-Lie derivative in the $\{ K, L \}$ direction is the graded commutator of $\mathcal{L}_K$ with $\mathcal{L}_L$. In particular, it follows from this definition that
\begin{equation} \label{eq:FNsymm}
\{ L, K \} = - (-1)^{kl} \{ K, L \},
\end{equation}
and that
\begin{equation} \label{eq:FNodd}
(\mathcal{L}_K)^2 = \tfrac{1}{2} \mathcal{L}_{\{K, K\}} \text{ when $k$ is odd}.
\end{equation}
Hence we deduce that
\begin{equation} \label{eq:FNeq}
\begin{aligned}
\{ K, K \} = 0 & \text{ always when $k$ is even}, \\
\{ K, K \} = 0 & \text{ if and only if $(\mathcal{L}_K)^2 = 0$ when $k$ is odd}.
\end{aligned}
\end{equation}
A further consequence of the definition of the Fr\"olicher-Nijenhuis bracket and the graded Jacobi identity is that $\{ \cdot, \cdot \}$ also satisfies a graded Jacobi identity:
\begin{equation*}
\{ K, \{L, P \} \} = \{ \{ K, L\}, P \} + (-1)^{kl} \{ L, \{K, P \} \}.
\end{equation*}
From the above identity and~\eqref{eq:FNsymm} we find that
\begin{equation} \label{eq:FNJacobi}
\{ K, \{ K, K \} \} = 0 \text{ always when $k$ is odd}.
\end{equation}
A good reference for the Fr\"olicher-Nijenhuis bracket, the algebraic and Nijenhuis-Lie derivations, and their various properties and relations is~\cite{KMS}. 

From now on let $n = \dim M = 2m$ and let $(M^{2m}, J)$ be an \emph{almost complex manifold}. This means that $J \in \Omega^1 (M, TM) = \Gamma(\End TM)$ such that $J^2 = - I$. Since $J$ is a vector-valued $1$-form we have induced derivations $\iota_J$ and $\mathcal{L}_J$ of degree $0$ and $1$, respectively.

We note for future reference from~\eqref{eq:Lie-derivation} that
\begin{equation} \label{eq:LJspecial}
\text{$\mathcal{L}_J = \iota_J \dd$ on $\Omega^0$ \quad and \quad $\mathcal{L}_J = - \dd \iota_J$ on $\Omega^n$.}
\end{equation}

Associated to $J$ is the \emph{Nijenhuis tensor} $N \in \Omega^2 (M, TM)$. One way to define $N$ is in terms of the Fr\"olicher-Nijenhuis bracket by $N = -\tfrac{1}{2} \{ J, J \}$. The factor of $-\tfrac{1}{2}$ is not important and is inserted here only to match the most common convention for $N$, which is
\begin{equation} \label{eq:Nclassical}
N(X, Y) = [X, Y] + J [JX, Y] + J[X, JY] - [JX, JY].
\end{equation}
Note from~\eqref{eq:FNodd} and~\eqref{eq:FNeq} that
\begin{equation} \label{eq:LJsquared}
(\mathcal{L}_J)^2 = - \mathcal{L}_N.
\end{equation}
In particular we deduce that
\begin{equation} \label{eq:Nzero}
N = 0 \text{ if and only if } (\mathcal{L}_J)^2 = 0.
\end{equation}
If the Nijenhuis tensor vanishes we say that $J$ is \emph{integrable}, and by the Newander-Nirenberg Theorem this happens if and only if $M$ admits an atlas of \emph{holomorphic charts} making $M$ into a \emph{complex manifold} such that $J_p$ corresponds to multiplication by $i$ on each $T_p M \cong \C^{m}$. From $N$ we get induced derivations $\iota_N$ and $\mathcal{L}_N$ of degree $1$ and $2$, respectively.

\subsection{Cohomologies defined using $\dd$, $\mathcal{L}_J$, and $\mathcal{L}_N$} \label{sec:cohom}

In this section we define the new cohomologies and discuss some of their basic properties.

Recall from Section~\ref{sec:deriv} that on an almost complex manifold $(M, J)$ with Nijenhuis tensor $N$, we have three Nijenhuis-Lie derivations: $\dd$, $\mathcal{L}_J$, and $\mathcal{L}_N$. Using~\eqref{eq:LJsquared} and~\eqref{eq:Nzero} we can summarize their properties so far as follows:
{\renewcommand{\arraystretch}{1.2}
\begin{center}
\begin{tabular}{|c|c|c|}
\hline
Derivation & Degree & Square \\ \hline
$\dd$ & 1 & $\dd^2 = 0$ always \\
$\mathcal{L}_J$ & 1 & $(\mathcal{L}_J)^2 = - \mathcal{L}_N$, which vanishes iff $N = 0$ \\
$\mathcal{L}_N$ & 2 & $(\mathcal{L}_N)^2 \neq 0$ in general \\
\hline
\end{tabular}
\end{center}
}
We seek to define cohomologies on $(M, J)$ using these three operators. Thus we look for a subspace of the space $\Omega^{\bu} (M)$ of forms on which one of these operators squares to zero. Of course, such a subspace would need to be invariant under the operator. To this end, we observe the following.
\begin{lemma} \label{lemma:commutators}
The following graded commutations relations hold:
\begin{equation*}
[ \dd, \mathcal{L}_J ] = 0, \qquad [ \dd, \mathcal{L}_N ] = 0, \qquad [ \mathcal{L}_J , \mathcal{L}_N ] = 0.
\end{equation*}
Consequently, if $P, Q$ are any two of the operators $\dd, \mathcal{L}_J, \mathcal{L}_N$, then $P$ maps $\ker Q$ into itself.
\end{lemma}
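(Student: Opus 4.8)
The plan is to derive all three commutation relations from the single identity~\eqref{eq:commdL}, namely $[\dd, \mathcal{L}_K] = 0$ for every vector-valued form $K$, together with the defining property~\eqref{eq:FNdefn} of the Fr\"olicher-Nijenhuis bracket and the bracket identities~\eqref{eq:FNsymm}--\eqref{eq:FNJacobi}. The first relation, $[\dd, \mathcal{L}_J] = 0$, is literally the $K = J$ case of~\eqref{eq:commdL}, and the second, $[\dd, \mathcal{L}_N] = 0$, is the $K = N$ case; so those two are immediate and require no work beyond citing~\eqref{eq:commdL}. The content is entirely in the third relation $[\mathcal{L}_J, \mathcal{L}_N] = 0$.

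For the third relation, I would use~\eqref{eq:FNdefn}, which says $[\mathcal{L}_J, \mathcal{L}_N] = \mathcal{L}_{\{J, N\}}$ (the sign $(-1)^{kl}$ with $k=1$, $l=2$ is $+1$, and since $K\mapsto\mathcal{L}_K$ is injective it suffices to show the bracket $\{J, N\}$ vanishes as a vector-valued form). Now recall $N = -\tfrac12\{J, J\}$, so $\{J, N\} = -\tfrac12\{J, \{J, J\}\}$, and this vanishes by~\eqref{eq:FNJacobi}, which states precisely that $\{K, \{K, K\}\} = 0$ when $K$ has odd degree — and $J$ has degree $1$. Hence $\mathcal{L}_{\{J,N\}} = 0$, giving $[\mathcal{L}_J, \mathcal{L}_N] = 0$. (Equivalently, one could argue more concretely: $\mathcal{L}_N = -(\mathcal{L}_J)^2$ by~\eqref{eq:LJsquared}, so $[\mathcal{L}_J, \mathcal{L}_N] = \mathcal{L}_J(-(\mathcal{L}_J)^2) - (-1)^{1\cdot 2}(-(\mathcal{L}_J)^2)\mathcal{L}_J = -(\mathcal{L}_J)^3 + (\mathcal{L}_J)^3 = 0$ — this is cleaner and avoids invoking the Jacobi identity for the bracket, though it hides the fact in an associativity cancellation rather than the Jacobi identity.)

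For the final sentence of the lemma, suppose $[P, Q] = 0$ where $P$ has degree $p$ and $Q$ has degree $q$, and let $\alpha \in (\ker Q)^i$, so $Q\alpha = 0$. Then $Q(P\alpha) = PQ\alpha \pm \text{(sign)}$; more precisely, $[P,Q]\alpha = PQ\alpha - (-1)^{pq} QP\alpha = 0$ forces $QP\alpha = (-1)^{pq}PQ\alpha = 0$, so $P\alpha \in \ker Q$. Applying this to each ordered pair drawn from $\{\dd, \mathcal{L}_J, \mathcal{L}_N\}$ gives the claim.

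The only step requiring any real thought is the vanishing of $[\mathcal{L}_J, \mathcal{L}_N]$, and even there the work has already been done for us: it is either the graded Jacobi identity~\eqref{eq:FNJacobi} applied to $J$, or a one-line associativity cancellation using $\mathcal{L}_N = -(\mathcal{L}_J)^2$. So I do not anticipate any genuine obstacle; the lemma is essentially a bookkeeping consequence of~\eqref{eq:commdL},~\eqref{eq:FNdefn}, and~\eqref{eq:LJsquared}.
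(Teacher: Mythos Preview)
Your proposal is correct and matches the paper's proof essentially verbatim: the first two relations are cited from~\eqref{eq:commdL}, and the third is obtained via $[\mathcal{L}_J, \mathcal{L}_N] = -\tfrac{1}{2}\mathcal{L}_{\{J,\{J,J\}\}} = 0$ using~\eqref{eq:FNdefn} and~\eqref{eq:FNJacobi}. Your alternative associativity argument using $\mathcal{L}_N = -(\mathcal{L}_J)^2$ and your explicit verification of the kernel-preservation claim are correct additions that the paper leaves implicit.
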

\begin{proof}
This first two relations are~\eqref{eq:commdL}. For the third relation, using~\eqref{eq:FNdefn} and~\eqref{eq:FNJacobi} we compute
\begin{equation*}
[ \mathcal{L}_J, \mathcal{L}_N ] = [ \mathcal{L}_J, - \tfrac{1}{2} \mathcal{L}_{\{J,J\}} ] = - \tfrac{1}{2} \mathcal{L}_{\{J, \{J, J\} \}} = 0
\end{equation*}
as claimed.
\end{proof}
Now we ask when is $P^2 = 0$ on the subspace $\ker Q$.

\emph{Case One:} $P= \dd$. Then $P^2 = 0$ always, so in particular it continues to be true on the subspaces $\ker \mathcal{L}_J$ and $\ker \mathcal{L}_N$. (If we do not restrict to $\ker Q$ then we just obtain de Rham cohomology.)

\emph{Case Two:} $P = \mathcal{L}_J$. Since $(\mathcal{L}_J)^2 = - \mathcal{L}_N$, we find that $P^2$ indeed always vanishes on the subspace $\ker \mathcal{L}_N$. On the other hand, to ensure that $P^2 = 0$ on the subspace $\ker \dd$, we would need to know that $(\ker \dd) \subseteq (\ker \mathcal{L}_N)$, but this inclusion does not hold in general.

\emph{Case Three:} $P = \mathcal{L}_N$. In this case, $P = - (\mathcal{L}_J)^2$, so in fact $P = 0$ on $\ker \mathcal{L}_J$. Moreover, in general we have $P^2 = (\mathcal{L}_J)^4 \neq 0$ on $\ker \dd$.

\begin{rmk} \label{LNremark}
In fact, it is quite difficult to understand the operator $(\mathcal{L}_N)^2$, because unlike the case $(\mathcal{L}_J)^2 = - \mathcal{L}_N$, we \emph{do not} have that $(\mathcal{L}_N)^2$ is again a Nijenhuis-Lie derivation. Indeed, it need not be a derivation at all. Moreover, again unlike the case of $(\mathcal{L}_J)^2 = \tfrac{1}{2} \mathcal{L}_{\{J, J\}}$, the operator $(\mathcal{L}_N)^2$ is not related to the element $\{N, N\}$, which is always zero by~\eqref{eq:FNeq}.
\end{rmk}

The above discussion motivates the definition of the following three cohomologies on $(M, J)$.
\begin{defn} \label{defn:cohom}
Let $(M, J)$ be an almost complex manifold.
\begin{itemize}
\item The \emph{$J$-cohomology} of $(M, J)$ in degree $k$ is denoted by $H^k_J (M)$, and is defined to be the cohomology of the complex
\begin{equation*}
(\ker \mathcal{L}_J)^0 \overset{\dd}{\longrightarrow} (\ker \mathcal{L}_J)^1 \overset{\dd}{\longrightarrow} \cdots (\ker \mathcal{L}_J)^{2m - 1} \overset{\dd}{\longrightarrow} (\ker \mathcal{L}_J)^{2m}.
\end{equation*}
\emph{The $J$-cohomology is in general nontrivial even if $J$ is integrable.}
\item The \emph{$N$-cohomology} of $(M, J)$ in degree $k$ is denoted by $H^k_N (M)$, and is defined to be the cohomology of the complex
\begin{equation*}
(\ker \mathcal{L}_N)^0 \overset{\dd}{\longrightarrow} (\ker \mathcal{L}_N)^1 \overset{\dd}{\longrightarrow} \cdots (\ker \mathcal{L}_N)^{2m - 1} \overset{\dd}{\longrightarrow} (\ker \mathcal{L}_N)^{2m}.
\end{equation*}
\item The \emph{$J$-twisted $N$-cohomology} of $(M, J)$ in degree $k$ is denoted by $\widetilde{H}^k_N (M)$, and is defined to be the cohomology of the complex
\begin{equation*}
(\ker \mathcal{L}_N)^0 \overset{\mathcal{L}_J}{\longrightarrow} (\ker \mathcal{L}_N)^1 \overset{\mathcal{L}_J}{\longrightarrow} \cdots (\ker \mathcal{L}_N)^{2m - 1} \overset{\mathcal{L}_J}{\longrightarrow} (\ker \mathcal{L}_N)^{2m}.
\end{equation*}
\end{itemize}
\end{defn}
\begin{rmk} \label{rmk:cohomdefns}
We make the following observations about these three cohomologies.
\begin{enumerate}[(a)] \setlength\itemsep{-0.8mm}
\item Suppose the $J$ is integrable. That is, $N = 0$. Then $(\ker \mathcal{L}_N)^k = \Omega^k (M)$. In particular, in this case the $N$-cohomology $H^k_N (M)$ is just the de Rham cohomology.
\item Consider the operator $\dc = J^{-1} \dd J$, which is called the $J$-twist of $\dd$ in~\cite{dKS}. In~\cite[Section 3.3]{dKS} it is shown that
\begin{equation} \label{eq:dKS}
\dc = - \mathcal{L}_J - \iota_{J \cdot N},
\end{equation}
where $J \cdot N$ is a vector-valued $2$-form on $M$ defined by $(J \cdot N) (X, Y) = J ( N(JX, JY) )$. Because $\mathcal{L}_J$ is closely related to the $J$-twist $\dc$ of $\dd$, that is why we choose to call $\widetilde{H}^k_N (M)$ the $J$-twisted $N$-cohomology. If $N = 0$ then by (a) above $(\ker \mathcal{L}_N)^k = \Omega^k (M)$, and by~\eqref{eq:dKS} we have $\mathcal{L}_J = - \dc$. Thus when $N = 0$, the $J$-twisted $N$-cohomology is just the $\dc$ cohomology, which is clearly isomorphic to the de Rham cohomology.
\item By (a) and (b) above, both the $N$-cohomology and the $J$-twisted $N$-cohomology are only interesting if $J$ is \emph{not} integrable, that is when $N \neq 0$. By contrast, \emph{the $J$-cohomology $H^k_J (M)$ is in general nontrivial even if $J$ is integrable.}
\end{enumerate}
\end{rmk}
Next we investigate the functoriality of these three cohomologies.
\begin{defn} \label{defn:morphism}
A morphism between almost complex manifolds $(M,J)$ and $(M',J')$ is a smooth map $f: M \to M'$ such that $(f_*)_p \circ J_p = (J')_{f(p)} \circ (f_*)_p$ for all $p \in M$.
\end{defn}

\begin{prop} \label{prop:functoriality}
Let $f : (M, J) \to (M', J')$ be a morphism between almost complex manifolds. Then we have induced homomorphisms
\begin{align*}
f^* & : H^k_J (M') \to H^k_J (M), \\
f^* & : H^k_N (M') \to H^k_N (M), \\ 
f^* & : \widetilde{H}^k_N (M') \to \widetilde{H}^k_N (M),
\end{align*}
which obey the usual functoriality laws $I^* = I$ and $(fg)^* = g^* f^*$.
\end{prop}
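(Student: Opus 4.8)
The plan is to show that the ordinary pullback of differential forms, $f^* : \Omega^\bu(M') \to \Omega^\bu(M)$, intertwines each of the operators $\dd$, $\mathcal{L}_J$, $\mathcal{L}_N$ on $M$ with its primed counterpart on $M'$, and then to invoke the standard fact that a morphism of complexes descends to cohomology.

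The key step is to check that $f^*$ commutes with the algebraic derivation $\iota_J$, in the sense that $f^* \circ \iota_{J'} = \iota_J \circ f^*$ as maps $\Omega^k(M') \to \Omega^k(M)$. Since $f$ need not be a diffeomorphism one cannot push vector fields forward, but $\iota_J$ is algebraic (pointwise and tensorial), so this is a pointwise statement: for $p \in M$ and $\alpha \in \Omega^k(M')$, evaluating both sides on tangent vectors $X_1, \ldots, X_k \in T_p M$ reduces, after expanding $\iota_{J'}$, to repeated use of the identity $(J')_{f(p)} \circ (f_*)_p = (f_*)_p \circ J_p$ from Definition~\ref{defn:morphism}. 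Combining $f^* \circ \iota_{J'} = \iota_J \circ f^*$ with the classical relation $f^* \circ \dd = \dd \circ f^*$ and the definition $\mathcal{L}_J = [\iota_J, \dd]$ in~\eqref{eq:Lie-derivation}, we obtain $f^* \circ \mathcal{L}_{J'} = \mathcal{L}_J \circ f^*$ purely formally. Squaring this and applying~\eqref{eq:LJsquared}, namely $(\mathcal{L}_J)^2 = - \mathcal{L}_N$, on each of $M$ and $M'$ then gives $f^* \circ \mathcal{L}_{N'} = \mathcal{L}_N \circ f^*$ with no further work. (Alternatively, $f^* \circ \mathcal{L}_{N'} = \mathcal{L}_N \circ f^*$ follows from the naturality of the Fr\"olicher--Nijenhuis bracket: the morphism condition says precisely that $J$ and $J'$ are $f$-related, hence so are $N = - \tfrac12 \{J, J\}$ and $N' = - \tfrac12 \{J', J'\}$; see~\cite{KMS}.)

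The remainder is bookkeeping. From $f^* \circ \mathcal{L}_{J'} = \mathcal{L}_J \circ f^*$ we get that $f^*$ maps $(\ker \mathcal{L}_{J'})^k$ into $(\ker \mathcal{L}_J)^k$, and from $f^* \circ \mathcal{L}_{N'} = \mathcal{L}_N \circ f^*$ that $f^*$ maps $(\ker \mathcal{L}_{N'})^k$ into $(\ker \mathcal{L}_N)^k$, for every $k$. Restricting $f^*$ to these subspaces therefore gives a morphism of complexes in each of the three cases: for $H^\bu_J$ and $H^\bu_N$ the differential is $\dd$, and $f^*$ commutes with $\dd$; for $\widetilde{H}^\bu_N$ the differential is $\mathcal{L}_J$ (which preserves $\ker \mathcal{L}_N$ by Lemma~\ref{lemma:commutators}), and $f^*$ commutes with $\mathcal{L}_J$ while also preserving $\ker \mathcal{L}_N$. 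A morphism of complexes induces a homomorphism on cohomology, yielding the three claimed maps $f^*$. Finally the functoriality laws $I^* = I$ and $(fg)^* = g^* f^*$ already hold for the pullback of forms and for its restrictions to the relevant kernels, and hence pass to cohomology. I expect the only genuine subtlety to be the one flagged above: establishing the compatibility of $f^*$ with $\iota_J$ (and thus with $\mathcal{L}_J$ and $\mathcal{L}_N$) without appealing to a pushforward of vector fields, which is unavailable for a general smooth morphism; once that pointwise check is done, everything else is formal.
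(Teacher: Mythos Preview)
Your proposal is correct and follows essentially the same approach as the paper: establish the intertwining relations $f^* \dd = \dd f^*$, $f^* \mathcal{L}_{J'} = \mathcal{L}_J f^*$, and $f^* \mathcal{L}_{N'} = \mathcal{L}_N f^*$, after which everything is formal. The only difference is that the paper simply cites~\cite[8.15]{KMS} for these relations, whereas you supply the pointwise verification of $f^* \iota_{J'} = \iota_J f^*$ and derive the rest from it.
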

\begin{proof}
For such a morphism $f$, we have
\begin{equation*}
f^* \dd = \dd f^*, \qquad f^*\mathcal{L}_{J'} = \mathcal{L}_J f^*, \qquad f^*\mathcal{L}_{N'} = \mathcal{L}_N f^*,
\end{equation*}
on $\Omega^{\bu} (M')$. (See~\cite[8.15]{KMS}). The result now follows.
\end{proof}

\begin{cor} \label{cor:isomorphic}
Isomorphic almost complex manifolds have isomorphic cohomologies $H^k_J$, $H^k_N$, and $\widetilde{H}^k_N$. In particular these cohomologies can be used to distinguish \emph{non-isomorphic} diffeomorphic almost complex manifolds.
\end{cor}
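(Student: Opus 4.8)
The plan is to deduce this directly from Proposition~\ref{prop:functoriality} and the functoriality laws recorded in its proof. First I would unwind what it means for $(M, J)$ and $(M', J')$ to be \emph{isomorphic} as almost complex manifolds: there is a diffeomorphism $f : M \to M'$ which is a morphism in the sense of Definition~\ref{defn:morphism}. The one small thing to verify is that $f^{-1}$ is then automatically a morphism $(M', J') \to (M, J)$ as well. This is immediate: from $(f_*)_p \circ J_p = (J')_{f(p)} \circ (f_*)_p$, composing on the left with $((f^{-1})_*)_{f(p)} = ((f_*)_p)^{-1}$ and on the right with the same inverse yields $(f^{-1})_* \circ J' = J \circ (f^{-1})_*$ at the corresponding points.

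Next I would apply Proposition~\ref{prop:functoriality} to both $f$ and $f^{-1}$, obtaining induced maps $f^* : H^k_J(M') \to H^k_J(M)$ and $(f^{-1})^* : H^k_J(M) \to H^k_J(M')$, and likewise on $H^k_N$ and $\widetilde{H}^k_N$. The functoriality laws $(gh)^* = h^* g^*$ and $I^* = I$ then give
\begin{equation*}
f^* \circ (f^{-1})^* = (f^{-1} \circ f)^* = I, \qquad (f^{-1})^* \circ f^* = (f \circ f^{-1})^* = I,
\end{equation*}
so in every degree $k$ the map $f^*$ is an isomorphism with inverse $(f^{-1})^*$ on each of the three cohomologies. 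This proves the first assertion.

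For the final sentence I would argue by contraposition. Suppose $(M, J)$ and $(M', J')$ have diffeomorphic underlying smooth manifolds, and suppose that in addition some diffeomorphism intertwines $J$ with $J'$, i.e.\ the two almost complex manifolds are isomorphic; then by the first part their cohomologies $H^k_J$ (respectively $H^k_N$, $\widetilde{H}^k_N$) must be isomorphic. Hence if one exhibits two almost complex structures on diffeomorphic manifolds whose $H^k_J$ (or $H^k_N$, or $\widetilde{H}^k_N$) are \emph{not} isomorphic, one may conclude that no diffeomorphism intertwines the two structures, so they are non-isomorphic as almost complex manifolds despite being diffeomorphic as smooth manifolds. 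I do not expect any genuine obstacle here: the content is entirely formal, the only (trivial) verification being that the inverse of an almost complex isomorphism is again a morphism, which I noted above.
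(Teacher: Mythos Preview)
Your argument is correct and is exactly the formal deduction the paper intends: the corollary is stated without proof immediately after Proposition~\ref{prop:functoriality}, and your write-up simply spells out the implicit ``apply functoriality to $f$ and $f^{-1}$'' step, together with the trivial check that $f^{-1}$ is again a morphism.
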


In Example~\ref{ex:non-isom} in the next section we give an explicit example of the use of the $N$-cohomology $H^k_N$ to distinguish \emph{non-isomorphic} non-integrable almost complex structures on particular $4$-manifolds. The $J$-twisted $N$-cohomology is left for future study. From Section~\ref{sec:Jcohom} onwards, we focus our attention exclusively on the $J$-cohomology, as this cohomology has the most interesting applications that the authors were able to find, including in the integrable case.

\subsection{An example of distinguishing almost complex structures} \label{sec:non-isom}

In this section we work through a lengthy example of computing the first $N$-cohomology group $H^1_N$ to distinguish \emph{non-isomorphic} non-integrable almost complex structures of a certain special type. The example illustrates the explicit computability of $H^{\bu}_N$.

\begin{ex} \label{ex:non-isom}
Consider the manifolds of the form $M =  X_1 \times X_2 \times X_3 \times X_4$, where each $X_i$ is either $\R$ or $S^1 = \R / \Z$. We take global ``coordinates'' $x= (x_1, x_2, x_3, x_4)$ on $M$, where $x_i$ is a global ``coordinate'' on $X_i$, understood to be periodic with period $1$ when $X_i = S^1$. At a point $x \in M$, the tangent space $T_x M$ is spanned by the tangent vectors $\{ \partial_{x_1}, \partial_{x_2}, \partial_{x_3}, \partial_{x_4} \}$, which we identify with the standard basis $\{ e_1, e_2, e_3, e_4 \}$ of $\R^4$. Consider on $M$ the family of almost complex structures given by
\begin{equation} \label{eq:non-isomJ}
J = \begin{pmatrix}
  0   &   1   &   p   &   0   \\
 -1   &   0   &   0   &    p   \\
  0   &   0   &   0   &   -1   \\
  0   &   0   &   1   &    0   
\end{pmatrix}
\end{equation}
where $p = p(x_1)$ is a function only of $x_1 \in X_1$, which for simplicity we assume to be real analytic. In particular this means that $p'$ has only isolated zeroes. It is trivial to verify that $J^2 = -I$, and a short computation using~\eqref{eq:Nclassical} yields
\begin{equation} \label{eq:non-isomNzero}
\begin{aligned}
N_{12} & = 0, &  N_{13} & = - p' e_2, & N_{14} & = p' e_1, \\
N_{23} & = - p' e_1, & N_{24} & = - p' e_2, & N_{34} & = - p p' e_2,
\end{aligned}
\end{equation}
where $N_{ij} = N(e_i, e_j)$. In particular such a $J$ is integrable if and only if $p$ is constant. Since $H^k_N = H^k_{\DR}$ in the integrable case, we will henceforth assume that $p'$ is \emph{not identically zero}. For future reference we note that if $\alpha = \sum_{j=1}^4 \alpha_j \dd x_j$ is a $1$-form on $M$, then from~\eqref{eq:non-isomNzero} and~\eqref{eq:alg-derivation1} it follows that
\begin{equation} \label{eq:non-isom1formN}
\begin{aligned}
\iota_N \alpha & = -p ' \alpha_2 \dd x_1 \wedge \dd x_3 + p' \alpha_1 \dd x_1 \wedge \dd x_4 - p' \alpha_1 \dd x_2 \wedge \dd x_3 \\
& \qquad {}- p' \alpha_2 \dd x_2 \wedge \dd x_4 - p p' \alpha_2 \dd x_3 \wedge \dd x_4.   
\end{aligned}
\end{equation}

We will compute $H^1_N$ for these non-integrable almost complex manifolds $(M, J)$, and exhibit several isomorphism classes among them. By definition of $H^k_N$, we have
\begin{equation} \label{eq:non-isomH1def}
H^1_N = \frac{(\ker \dd)^1 \cap (\ker \mathcal{L}_N)^1}{\dd (\ker \mathcal{L}_N)^0}.
\end{equation}
Let us first consider the denominator of~\eqref{eq:non-isomH1def}. Suppose $h = h(x_1, x_2, x_3, x_4)$ lies in $(\ker \mathcal{L}_N)^0$. Then we have
\begin{equation*}
0 = \mathcal{L}_N h = (\iota_N \dd - \dd \iota_N) h = \iota_N (\dd h).
\end{equation*}
By~\eqref{eq:non-isom1formN} and our hypothesis that $p' \not\equiv 0$ we immediately deduce that $h$ is independent of $x_1, x_2$. That is,
\begin{equation} \label{eq:non-isomdenom}
{\dd (\ker \mathcal{L}_N)^0} = \{ \dd h : h = h(x_3, x_4) \}.
\end{equation}

Next we consider the numerator of~\eqref{eq:non-isomH1def}. Suppose that $\alpha = \sum_{j=1}^4 \alpha_j \dd x_j \in (\ker \dd)^1 \cap (\ker \mathcal{L}_N)^1$. Then we have
\begin{equation*}
0 = \mathcal{L}_N \alpha = (\iota_N \dd - \dd \iota_N) \alpha = - \dd (\iota_N \alpha).
\end{equation*}
Thus, taking $\dd$ of equation~\eqref{eq:non-isom1formN}, we obtain a $3$-form on $M$ that must vanish. This gives four independent equations. A short calculation shows that these equations are the following:
\begin{equation} \label{eq:non-isom-ex}
\begin{aligned}
-p' \frac{\partial\alpha_2}{\partial x_2} + p' \frac{\partial\alpha_1}{\partial x_1} + p'' \alpha_1 & = 0, \\
p' \frac{\partial\alpha_1}{\partial x_2} + p' \frac{\partial\alpha_2}{\partial x_1} + p'' \alpha_2 & = 0, \\
p' \frac{\partial\alpha_2}{\partial x_4} + p' \frac{\partial\alpha_1}{\partial x_3} + p p' \frac{\partial\alpha_2}{\partial x_1} + (p p')' \alpha_2 & = 0, \\
p'\frac{\partial\alpha_1}{\partial x_4} - p' \frac{\partial\alpha_2}{\partial x_3} + p p' \frac{\partial\alpha_2}{\partial x_2} & = 0.
\end{aligned}
\end{equation}
Moreover, because $\alpha = \sum \alpha_i dx_i \in \ker \dd$, we have $\frac{\partial\alpha_i}{\partial x_j} = \frac{\partial\alpha_j}{\partial x_i}$ for all $i,j$, so the second equation in~\eqref{eq:non-isom-ex} becomes $2 p' \frac{\partial\alpha_2}{\partial x_1} + p'' \alpha_2 = 0$, which can be solved using standard ODE techniques to give
\begin{equation} \label{eq:alpha2}
\alpha_2 (x_1, x_2, x_3, x_4) = D(x_2, x_3, x_4) |p'(x_1)|^{-\frac{1}{2}}.
\end{equation}
We now consider three separate cases:

{\bf Case 1: $X_1 = S^1$.} Since $p(x_1)$ is periodic, by the mean value theorem there exists some $a \in S^1$ such that $p'(a) = 0$. Hence we must have $D \equiv 0$ for $\alpha_2$ to be defined on all of $M$. Therefore in this case, $\alpha_2 \equiv 0$. Plugging into the first equation in~\eqref{eq:non-isom-ex} and solving gives
\begin{equation*}
\alpha_1 (x_1, x_2, x_3, x_4) = C(x_2, x_3, x_4) p'(x_1)^{-1},
\end{equation*}
which similarly forces $C \equiv 0$ and hence $\alpha_1 \equiv 0$. Then all the equations in~\eqref{eq:non-isom-ex} are trivially satisfied. Moreover from $\frac{\partial\alpha_i}{\partial x_j} = \frac{\partial\alpha_j}{\partial x_i}$ we then deduce that $\alpha_3$ and $\alpha_4$ are both functions of only $x_3, x_4$. We conclude that 
\begin{equation*}
(\ker \dd)^1 \cap (\ker \mathcal{L}_N)^1 = \{ \alpha_3 (x_3, x_4) \dd x_3 + \alpha_4 (x_3, x_4) \dd x_4 : \alpha_3 \dd x_3 +\alpha_4 \dd x_4 \in (\ker \dd)^1 \}.
\end{equation*}
Thus in this case from the above equation and~\eqref{eq:non-isomdenom} we conclude that
\begin{equation} \label{eq:case12}
H^1_{N} (M) = H^1_{\DR}(X_3 \times X_4).
\end{equation}

{\bf Case 2: $X_1 = \R$ and there exists $a \in \R$ such that $p'(a) = 0$.} The exact same reasoning works in this case to deduce that $H^1_{N} (M) = H^1_{\DR} (X_3 \times X_4)$ as in Case One.

{\bf Case 3: $X_1 = \R$ and $p'(x_1) \neq 0$ for all $x_1 \in \R$.} Without loss of generality we can assume $p'(x_1) > 0$ for all $x_1 \in \R$. Recall we obtained~\eqref{eq:alpha2} from the second equation of~\eqref{eq:non-isom-ex}. Plugging this into the first equation of~\eqref{eq:non-isom-ex} gives
\begin{equation} \label{eq:alpha1}
\alpha_1 (x_1, x_2, x_3, x_4) = \frac{1}{p'(x_1)} \left( D_{x_2}(x_2, x_3, x_4) \int^{x_1} p'(s)^{\frac{1}{2}} ds + C(x_2, x_3, x_4) \right).
\end{equation}
The constraint $\frac{\partial\alpha_1}{\partial x_2}=\frac{\partial\alpha_2}{\partial x_1}$ then becomes
\begin{equation} \label{eq:pdouble}
-\frac{D}{2} \frac{p''}{(p')^\frac{1}{2}} = C_{x_2} + D_{x_2 x_2} \int p'^{\frac{1}{2}}.
\end{equation}
Differentiating this with respect to $x_1$ gives
\begin{equation*}
-\frac{D}{2} \left( \frac{p'''}{(p')^\frac{1}{2}} - \frac{(p'')^2}{2 p' (p')^\frac{1}{2}} \right) = D_{x_2x_2} (p')^{\frac{1}{2}}.
\end{equation*}
The above simplifies to
\begin{equation} \label{eq:D}
-2 D p' p''' + D (p'')^2 = 4 D_{x_2 x_2} (p')^2.
\end{equation}
We now consider the possible solutions of~\eqref{eq:D}. If $D \not \equiv 0$, then~\eqref{eq:D} has solution
\begin{equation*}
p' (x_1) = E(x_2, x_3, x_4) \cos^2 \left( \sqrt{\frac{D_{x_2 x_2}}{D}} (F(x_2, x_3, x_4) + x_1) \right).
\end{equation*}
But the above equation contradicts the assumption that $p'(x_1) \neq 0$ for all $x_1$ unless $D_{x_2 x_2} = 0$. So we must have $D_{x_2 x_2} = 0$, and then the above equation says $p'(x_1)$ is constant, so we necessarily have that $p(x_1) = A x_1 + B$ for some $A > 0$, and then~\eqref{eq:pdouble} says that $C_{x_2} \equiv 0$. To summarize, so far in Case 3 we have deduced that if $p$ is not linear, then $D \equiv 0$.

\emph{Subcase 3a: $D \equiv 0$.} Substituting this back into equations~\eqref{eq:alpha2} and~\eqref{eq:alpha1} gives $\alpha_2 = 0$ and $\alpha_1 = \frac{C(x_3, x_4)}{p'(x_1)}$. But then the last two equations in~\eqref{eq:non-isom-ex} say that $\alpha_1$ is independent of $x_3, x_4$ so in fact $C$ is a constant. Now all the equations in~\eqref{eq:non-isom-ex} are satisfied. So the condition that $\mathcal{L}_N \alpha = 0$ has forced $\alpha = \tfrac{1}{p'(x_1)} \dd x_1 + \alpha_3(x_1, x_2, x_3, x_4) \dd x_3 + \alpha_4 (x_1, x_2, x_3, x_4) \dd x_4$. The condition $\dd \alpha = 0$ then forces $\alpha_3$ and $\alpha_4$ to be independent of $x_1, x_2$ so in Case 3a we find that
\begin{equation*}
(\ker \dd)^1 \cap (\ker \mathcal{L}_N)^1 \cong \mathrm{span} \left \{ \frac{1}{p'(x_1)} \right \} \oplus \{ \alpha_3 (x_3, x_4) \dd x_3 + \alpha_4 (x_3, x_4) \dd x_4 : \alpha_3 \dd x_3 +\alpha_4 \dd x_4 \in (\ker \dd)^1 \}.
\end{equation*}
Using~\eqref{eq:non-isomdenom} we finally conclude in Case 3a that
\begin{equation} \label{eq:case3a}
H^1_{N} (M) \cong \mathrm{span} \left \{ \frac{1}{p'(x_1)} \right \} \oplus H^1_{\DR} (X_3 \times X_4).
\end{equation}
 
\emph{Subcase 3b: $p(x_1) = A x_1 + B$ for some $A > 0$.} Substituting this back into equations~\eqref{eq:alpha2} and~\eqref{eq:alpha1} gives $\alpha_2 = \tfrac{1}{A^{\frac{1}{2}}} D$ and $\alpha_1 = \tfrac{1}{A}(A^{\frac{1}{2}} x_1 D_{x_2} +C)$, where $D_{x_2 x_2} = 0$ and $C_{x_2} = 0$. In this case the last two equations of~\eqref{eq:non-isom-ex} become 
\begin{equation*}
\begin{aligned}
\frac{1}{A^{\frac{1}{2}}} D_{x_4} + \frac{1}{A}(A^{\frac{1}{2}} x_1 D_{x_2 x_3} + C_{x_3}) + \frac{A}{A^{\frac{1}{2}}} D & = 0, \\
\frac{1}{A}(A^{\frac{1}{2}} x_1 D_{x_2 x_4} + C_{x_4}) - \frac{1}{A^{\frac{1}{2}}} D_{x_3} + \frac{A x_1 + B}{A^{\frac{1}{2}}} D_{x_2} & = 0,
\end{aligned}
\end{equation*}
which the authors were not able to solve in general. However, in the particular case when $X_2 = S^1$ and $X_3 = X_4 = \R$, we claim that there is an infinite dimensional space of solutions, which descends to an infinite dimensional subspace of $H^1_{N} (M)$. To see this, we note that $D_{x_2 x_2} = $ implies $D_{x_2} = 0$, since $x_2$ is a periodic coordinate. The equations above then reduce to
\begin{equation*}
A^{\frac{1}{2}} D_{x_4} + C_{x_3} + A^{\frac{3}{2}} D = 0, \qquad C_{x_4} - A^{\frac{1}{2}} D_{x_3} = 0.
\end{equation*}
This system has a solution if and only if $\frac{\partial}{\partial x_3} (A^{\frac{1}{2}} D_{x_3}) = \frac{\partial}{\partial x_4} ( - A^{\frac{1}{2}} D_{x_4} - A^{\frac{3}{2}} D )$. This constraint simplifies to $D_{x_3 x_3} + D_{x_4 x_4} + A D_{x_4} = 0$, which has an infinite-dimensional space of solutions for $D$. For each of these, we can then solve for $C$. By~\eqref{eq:alpha2} and~\eqref{eq:non-isomdenom} we therefore conclude that $H^1_{N}(\R \times S^1 \times \R^2)$ is infinite-dimensional if $p(x_1) = A x_1 + B$ for some $A > 0$.
\end{ex}
 
 An immediate corollary of Example~\ref{ex:non-isom} is the following.
\begin{cor} \label{non-isomcor}
Consider the manifold $M = S^1 \times \R^3$. The $N$-cohomology allows us to distinguish \emph{at least five non-isomorphic almost complex structures} on $M$ of the form~\eqref{eq:non-isomJ} with $p$ real analytic.
\end{cor}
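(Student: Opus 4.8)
The plan is to write down five almost complex structures on $M = S^1 \times \R^3$ of the form~\eqref{eq:non-isomJ}, compute $H^1_N$ for each directly from Example~\ref{ex:non-isom}, and separate their isomorphism classes using Corollary~\ref{cor:isomorphic} together with one extra functorial invariant. Realize $M$ as a product $X_1 \times X_2 \times X_3 \times X_4$; we are free to choose which single factor is the circle and to choose the real-analytic function $p = p(x_1)$. Take: $J_1$ with $X_1 = S^1$ and $p(x_1) = \sin(2\pi x_1)$; $J_2$ with $X_2 = S^1$ (hence $X_1 = X_3 = X_4 = \R$) and $p(x_1) = e^{x_1}$; $J_3$ with $X_3 = S^1$ (hence $X_1 = X_2 = X_4 = \R$) and $p(x_1) = x_1^2$; $J_4$ with $X_3 = S^1$ and $p(x_1) = e^{x_1}$; and $J_5$ with $X_2 = S^1$ and $p(x_1) = x_1$. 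In each case $p$ is real analytic with $p' \not\equiv 0$, so each $J_i$ is a non-integrable almost complex structure on $M$.

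Next I would read off $H^1_N$ from Example~\ref{ex:non-isom}. The manifold $(M, J_1)$ is an instance of Case~1, so by~\eqref{eq:case12} one gets $H^1_N \cong H^1_{\DR}(\R^2) = 0$; $(M, J_3)$ is an instance of Case~2 since $p'(0) = 0$, so $H^1_N \cong H^1_{\DR}(S^1 \times \R) \cong \R$; $(M, J_2)$ and $(M, J_4)$ are instances of Case~3a since $p' = e^{x_1} > 0$ everywhere and $p$ is not linear, so by~\eqref{eq:case3a} one gets $H^1_N \cong \mathrm{span}\{\tfrac{1}{p'}\dd x_1\} \oplus H^1_{\DR}(X_3 \times X_4)$, which is $\R$ for $J_2$ (as $X_3 = X_4 = \R$) and $\R^2$ for $J_4$; and $(M, J_5)$ is an instance of Case~3b with $X_2 = S^1$ and $p$ linear, so $H^1_N$ is infinite-dimensional. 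Hence $\dim H^1_N$ equals $0, 1, 1, 2, \infty$ for $J_1, \dots, J_5$ respectively, which already separates every pair except $\{J_2, J_3\}$.

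To separate $J_2$ from $J_3$, I would use the natural homomorphism $\lambda : H^k_N(M) \to H^k_{\DR}(M)$ induced by the inclusion of complexes $\big( (\ker \mathcal{L}_N)^{\bu}, \dd \big) \hookrightarrow \big( \Omega^{\bu}(M), \dd \big)$. Because a morphism of almost complex manifolds satisfies $f^* \dd = \dd f^*$ and $f^* \mathcal{L}_N = \mathcal{L}_N f^*$ (as in the proof of Proposition~\ref{prop:functoriality}), the map $\lambda$ is natural, so $\dim(\im \lambda)$ in each degree is an isomorphism invariant of $(M, J)$. For $(M, J_2)$ the generator of $H^1_N$ is the class of $\tfrac{1}{p'(x_1)}\dd x_1 = \dd\!\big( \int^{x_1} p'(s)^{-1}\,\dd s \big)$, which is $\dd$-exact on $M$ because $p' > 0$ on all of $\R$; hence $\lambda$ is zero in degree $1$, i.e.\ $\dim(\im \lambda) = 0$. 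For $(M, J_3)$ the generator of $H^1_N$ is the class of $\dd x_3$, which maps to a generator of $H^1_{\DR}(M) \cong \R$; hence $\dim(\im \lambda) = 1$. Therefore the pair $\big( \dim H^1_N,\ \dim(\im \lambda)\big)$ takes the five distinct values $(0,0)$, $(1,0)$, $(1,1)$, $(2,1)$, $(\infty, \cdot)$ on $(M, J_1), \dots, (M, J_5)$, so no two of these are isomorphic. I expect the main subtlety to be exactly this last point: on $M = S^1 \times \R^3$ the dimension of $H^1_N$ alone can only be $0$, $1$, $2$, or $\infty$, so to obtain a fifth isomorphism class one genuinely needs to exploit the functoriality of the comparison map $\lambda$ to de Rham cohomology (or a comparable refinement).
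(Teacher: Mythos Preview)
Your argument is correct, but it takes a genuinely different route from the paper's. The paper obtains its fifth isomorphism class cheaply by including an \emph{integrable} structure (any constant $p$): the vanishing or nonvanishing of the Nijenhuis tensor $N$ is itself preserved by isomorphisms of almost complex structures, so the integrable structure is automatically separated from the four non-integrable ones, and those four are then distinguished purely by $\dim H^1_N \in \{0,1,2,\infty\}$ exactly as you do with your $J_1, J_2, J_4, J_5$. You instead produce five \emph{non-integrable} structures and, since $\dim H^1_N$ then only yields four values, you refine the invariant by the natural comparison map $\lambda : H^1_N \to H^1_{\DR}$ induced by the inclusion $(\ker \mathcal{L}_N)^\bullet \hookrightarrow \Omega^\bullet$. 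Your naturality argument for $\lambda$ (hence the invariance of $\dim \im\lambda$) is fine, and your identifications of the generators and their de Rham classes for $J_2$ and $J_3$ are correct.

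What each approach buys: the paper's proof is shorter and uses nothing beyond Example~\ref{ex:non-isom} and the trivial observation that $N=0$ is an isomorphism invariant; however, it arguably stretches the phrase ``the $N$-cohomology allows us to distinguish'', since the integrable class is separated by $N$ itself rather than by $H^\bullet_N$. Your approach stays entirely within the cohomological framework and in fact extracts a finer invariant (the pair $(\dim H^1_N,\ \dim\im\lambda)$) that the paper does not mention; the cost is the extra step of introducing and justifying $\lambda$. One small caveat: your closing remark that on $S^1\times\R^3$ the dimension of $H^1_N$ ``can only be $0,1,2$ or $\infty$'' is only established for structures of the specific form~\eqref{eq:non-isomJ} via the case analysis of Example~\ref{ex:non-isom}, not for arbitrary almost complex structures; this does not affect your proof, but the sentence should be qualified accordingly.
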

\begin{proof}
If $p' \equiv 0$ then $J$ is integrable. If $p' \not \equiv 0$, then $J$ will not be integrable, so it cannot be isomorphic to any integrable $J$, as the vanishing or nonvanishing of the Nijenhuis tensor is preserved by isomorphisms of almost complex structures. We will show that there exist at least four non-isomorphic non-integrable almost complex structures of the form~\eqref{eq:non-isomJ}. If we take $X_1 = S^1$ and $X_2 = X_3 = X_4 = \R$, and choose any non-constant periodic function $p(x_1)$ on $X_1$, then equation~\eqref{eq:case12} tells us that $H^1_N (M) = H_{\DR} (\R^2) = 0$. Alternatively, we can also construct $M$ by taking $X_2 = S^1$ and $X_1 = X_3 = X_4 = \R$. By choosing the non-constant function $p(x_1)$ on $\R$ appropriately, we can arrange either Case 3a or Case 3b of Example~\ref{ex:non-isom}. In Case 3a, equation~\eqref{eq:case3a} gives $H^1_N (M) = \R$, and in Case 3b we get that $H^1_N (M)$ is infinite-dimensional. Finally, if we take $X_3 = S^1$ and $X_1 = X_2 = X_4 = \R$, then we can choose $p$ so that Case 3a is satisfied, and thus by~\eqref{eq:case3a} we get $H^1_N (M) = \R^2$. By Corollary~\ref{cor:isomorphic}, there are thus at least four non-isomorphic non-integrable almost complex structure of the form~\eqref{eq:non-isomJ} on $M = S^1 \times \R^3$.
\end{proof}

\begin{rmk} \label{rmk:non-isom}
By computing the other $N$-cohomology groups, it is expected that many more non-isomorphic non-integrable almost complex structures on $S^1 \times \R^3$ of the form~\eqref{eq:non-isomJ} for $p$ real analytic could be distinguished in this way.
\end{rmk}

\section{The $J$-cohomology} \label{sec:Jcohom}

In this section we study the $J$-cohomology $H^k_J(M)$ in detail, and determine several important results, including its finite-dimensionality (or lack thereof) and its relation (in the integrable case) to the $\del\delbar$-lemma and (in the general case) to the $\dd \mathcal{L}_J$-lemma, which we define. We also compute several explicit examples and discuss variations of our results. From now on, when we say $(M, J)$ is \emph{complex} we mean that $J$ is integrable, that is $N = 0$. Some of our results will be valid only in the complex case, while others will hold for general $J$.

\subsection{Some properties of the $J$-cohomology} \label{sec:Jcohomproperties}

We begin with the following lemma.
\begin{lemma} \label{lemma:JcohomDR}
The following equalities hold:
\begin{equation} \label{eq:JcohomDR}
\begin{aligned}
H^0_J & = H^0_{\DR}, \\
H^1_J & = (\ker \mathcal{L}_J)^1 \cap (\ker \dd)^1.
\end{aligned}
\end{equation}
\end{lemma}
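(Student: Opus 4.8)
The plan is to unwind the definition of the $J$-cohomology in low degrees. Since $[\dd, \mathcal{L}_J] = 0$ by Lemma~\ref{lemma:commutators}, the operator $\dd$ preserves $\ker \mathcal{L}_J$, and together with $\dd^2 = 0$ this makes $\big((\ker \mathcal{L}_J)^\bu, \dd\big)$ a genuine cochain complex, so that
\[
H^k_J = \frac{(\ker \mathcal{L}_J)^k \cap (\ker \dd)^k}{\dd\big( (\ker \mathcal{L}_J)^{k-1} \big)}.
\]
I would then treat the numerator and denominator in degrees $0$ and $1$ directly, the common ingredient being the inclusion $(\ker \dd)^0 \subseteq (\ker \mathcal{L}_J)^0$, which I establish first.

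To establish that inclusion, recall from~\eqref{eq:LJspecial} that $\mathcal{L}_J = \iota_J \dd$ on $\Omega^0$; hence any $h \in \Omega^0$ with $\dd h = 0$ also has $\mathcal{L}_J h = 0$. For the first equality, the denominator of $H^0_J$ is $\dd\big((\ker \mathcal{L}_J)^{-1}\big) = 0$, so $H^0_J = (\ker \mathcal{L}_J)^0 \cap (\ker \dd)^0 = (\ker \dd)^0$ by the inclusion just noted, and $(\ker \dd)^0 = H^0_{\DR}$ since there are no exact $0$-forms.

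For the second equality, the numerator of $H^1_J$ is by definition $(\ker \mathcal{L}_J)^1 \cap (\ker \dd)^1$, so it remains only to check that the denominator $\dd\big((\ker \mathcal{L}_J)^0\big)$ vanishes; equivalently, that $\mathcal{L}_J h = 0$ forces $\dd h = 0$ for $h \in \Omega^0$. Using $\mathcal{L}_J = \iota_J \dd$ on $\Omega^0$ once more, this reduces to the injectivity of $\iota_J$ on $\Omega^1(M)$, which follows from~\eqref{eq:alg-derivation1}: for $\beta \in \Omega^1(M)$ and any vector field $X$ we have $(\iota_J \beta)(X) = \beta(JX)$, so since $J$ is invertible ($J^2 = -I$) this vanishes for all $X$ only if $\beta = 0$. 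Thus $(\ker \mathcal{L}_J)^0 \subseteq (\ker \dd)^0$, the denominator is trivial, and $H^1_J = (\ker \mathcal{L}_J)^1 \cap (\ker \dd)^1$. The argument is short and presents no real obstacle; the only point meriting a moment's care is this last use of the invertibility of $J$ to kill the degree-one coboundaries, and I would present both equalities together to exploit the shared inclusion $(\ker \dd)^0 \subseteq (\ker \mathcal{L}_J)^0$.
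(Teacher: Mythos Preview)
Your proposal is correct and follows essentially the same approach as the paper: both arguments rest on the identity $\mathcal{L}_J = \iota_J \dd$ on $\Omega^0$ together with the invertibility of $J$ (equivalently, of $\iota_J = J^T$ on $1$-forms) to conclude $(\ker \mathcal{L}_J)^0 = (\ker \dd)^0$, from which both equalities follow immediately. The only cosmetic difference is that the paper establishes this equality in one stroke, whereas you verify the two inclusions separately.
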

\begin{proof}
We first identify the space $(\ker \mathcal{L}_J)^0$. Let $f \in \Omega^0 (M)$. Since $\mathcal{L}_J = \iota_J \dd - \dd \iota_J$, we have $\mathcal{L}_J f = \iota_J \dd f = J^T \dd f$, because $\iota_J$ vanishes on functions and $\iota_J = J^T$ on $1$-forms. But $J^T$ is invertible, so
\begin{equation} \label{eq:kerLJ0}
(\ker \mathcal{L}_J)^0 = (\ker \dd)^0.
\end{equation}
Thus we have $H^0_J = (\ker \mathcal{L}_J)^0 = (\ker \dd)^0 = H^0_{\DR}$. Moreover, again using~\eqref{eq:kerLJ0} we find that $\dd ((\ker \mathcal{L}_J)^0) = 0$, and hence
\begin{equation*}
H^1_J = \frac{ (\ker \mathcal{L}_J)^1 \cap (\ker \dd)^1 }{\dd ((\ker \mathcal{L}_J)^0)} = (\ker \mathcal{L}_J)^1 \cap (\ker \dd)^1
\end{equation*}
which is what we wanted to show.
\end{proof}

Recall that if $J$ is integrable, then from Remark~\ref{rmk:cohomdefns}(b) we have $\mathcal{L}_J = - \dc = - J^{-1} \dd J$. Since $\dd = \partial + \bar \partial$ in the integrable case, it follows that
\begin{equation} \label{eq:LJ}
\mathcal{L}_J = - \dc = i (\partial - \bar \partial) \qquad \text{ when $J$ is integrable}.
\end{equation}
In particular it follows that
\begin{equation} \label{eq:LJd}
\mathcal{L}_J \dd = i (\partial - \bar \partial)(\partial + \bar \partial) = 2 i \partial \bar \partial \qquad \text{ when $J$ is integrable},
\end{equation}
and that
\begin{equation} \label{eq:HJ1integrable}
(\ker \mathcal{L}_J) \cap (\ker \dd) = (\ker \partial) \cap (\ker \bar \partial) \qquad \text{ when $J$ is integrable}.
\end{equation}

\begin{prop} \label{prop:cptcplxinj}
If $M$ is \emph{compact} and \emph{complex}, then the canonical map $H^1_J \to H^1_{\DR}$ is injective.
\end{prop}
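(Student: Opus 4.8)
The plan is to show that a class in $H^1_J$ that is exact in $H^1_{\DR}$ must already be zero in $H^1_J$; by Lemma~\ref{lemma:JcohomDR} this amounts to the following concrete statement. Suppose $\alpha \in \Omega^1(M)$ satisfies $\dd\alpha = 0$ and $\mathcal{L}_J\alpha = 0$, and suppose $\alpha = \dd f$ for some $f \in \Omega^0(M)$. We must produce $g \in (\ker \mathcal{L}_J)^0$ with $\alpha = \dd g$; since $(\ker\mathcal{L}_J)^0 = (\ker\dd)^0$ by~\eqref{eq:kerLJ0}, what we actually need is to arrange that $\dd f$ itself is the differential of a \emph{constant} perturbation away — i.e.\ that we may take $f$ with $\mathcal{L}_J f = 0$, which forces $\dd f = 0$ and hence $\alpha = 0$. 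So the real content is: if $\dd f$ is both $\dd$-closed (automatic) and $\mathcal{L}_J$-closed, then $\dd f = 0$, i.e.\ $f$ is constant.

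The key computation is to unwind $\mathcal{L}_J(\dd f) = 0$. Using~\eqref{eq:LJspecial} and~\eqref{eq:commdL}, on $\Omega^0$ we have $\mathcal{L}_J \dd f = \dd(\mathcal{L}_J f) = \dd(\iota_J \dd f)$, and in the integrable case~\eqref{eq:LJd} gives the cleaner form $\mathcal{L}_J\dd f = 2i\,\partial\bar\partial f$. Thus the hypothesis is precisely $\partial\bar\partial f = 0$, i.e.\ $f$ is a (smooth, real-valued) pluriharmonic function on the compact complex manifold $M$. The plan is then to invoke the standard fact that a pluriharmonic function on a compact complex manifold is locally the real part of a holomorphic function, hence (by the maximum principle applied to $f$ itself, which is harmonic for any Hermitian metric since $\partial\bar\partial f = 0$ implies $f$ is pluriharmonic hence harmonic) $f$ is constant. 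Concretely: fix any Hermitian metric on $M$; then $\partial\bar\partial f = 0$ implies $\Delta f = 0$ for the associated (real) Laplace–Beltrami-type operator — more carefully, $\partial\bar\partial f = 0$ says the $(1,1)$-form vanishes, and contracting with the metric shows the complex Laplacian of $f$ vanishes; on a compact manifold a function with vanishing Laplacian is constant. Therefore $\dd f = 0$ and $\alpha = 0$, proving injectivity.

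I would organize the write-up as: (1) reduce via Lemma~\ref{lemma:JcohomDR} to showing $\dd\alpha = 0,\ \mathcal{L}_J\alpha = 0,\ \alpha = \dd f \Rightarrow \alpha = 0$; (2) compute $\mathcal{L}_J\dd f = 2i\,\partial\bar\partial f$ using~\eqref{eq:LJd}; (3) conclude $f$ is pluriharmonic, hence harmonic for an auxiliary Hermitian metric, hence constant by compactness; (4) deduce $\alpha = \dd f = 0$. The main obstacle — really the only non-formal point — is step (3): justifying cleanly that $\partial\bar\partial f = 0$ on a compact complex manifold forces $f$ constant. The cleanest route is to pick a Hermitian metric with Kähler form $\omega$, note $\partial\bar\partial f = 0$ implies $\dd(\dc f) = 0$ so $\dc f$ is a closed $1$-form, but also $\dc f$ is coexact-type; the fastest honest argument is that $i\partial\bar\partial f \wedge \omega^{m-1}$ is, up to a positive constant, $(\Delta f)\,\omega^m$ where $\Delta$ is (a multiple of) the real Laplacian of the metric, so $\partial\bar\partial f = 0 \Rightarrow \Delta f = 0 \Rightarrow f$ constant on compact $M$. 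I would state this as a short self-contained paragraph rather than citing a black box, since it is elementary and keeps the proof self-contained.
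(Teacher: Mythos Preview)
Your proposal is correct and is essentially the paper's argument, just framed more directly: the paper packages the reduction via a long exact sequence to the statement $(\ker \mathcal{L}_J \dd)^0 = (\ker \mathcal{L}_J)^0$, but this is exactly your claim that $\mathcal{L}_J \dd f = 0 \Rightarrow \dd f = 0$. Both then invoke~\eqref{eq:LJd} to get $\partial\bar\partial f = 0$ and conclude $f$ is locally constant; the paper does this via the Euclidean Laplacian in local holomorphic charts and the maximum principle, rather than by contracting against a global Hermitian metric.

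One small imprecision to fix in your step~(3): for an arbitrary Hermitian metric the operator $f \mapsto \mathrm{tr}_\omega(i\partial\bar\partial f)$ is the \emph{complex} (Chern) Laplacian, which differs from the Riemannian Laplace--Beltrami operator by first-order terms unless the metric is K\"ahler. Your conclusion is unaffected, since this operator is still second-order elliptic with no zeroth-order term and hence satisfies the strong maximum principle, but you should not call it ``the real Laplacian of the metric.'' The paper sidesteps this by working in a holomorphic chart, where $\partial\bar\partial f = 0$ gives $\sum_i \partial_{z_i}\partial_{\bar z_i} f = 0$, i.e.\ $f$ is harmonic for the flat Euclidean metric on the chart, and then applies the maximum principle there.
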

\begin{proof}
We have a short exact sequence of chain complexes 
\begin{equation*}
0 \to (\ker \mathcal{L}_J)^{\bu} \to \Omega^{\bu} \to \frac{\Omega^{\bu}}{(\ker \mathcal{L}_J)^{\bu}} \to 0
\end{equation*}
which induces a long exact sequence in cohomology:
\begin{equation*}
\cdots \to H^0 \left( \frac{\Omega^{\bu}}{(\ker \mathcal{L}_J)^{\bu}} \right) \to H^1_J \to H^1_{\DR} \to \cdots
\end{equation*}
We will show that $H^0 \left( \frac{\Omega^{\bu}}{(\ker \mathcal{L}_J)^{\bu}} \right) = 0$, which will immediately imply the result. Let $[f] \in \Omega^0 / (\ker \mathcal{L}_J)^0$. Then $\dd [f] = [\dd f]$, so $\dd [f] = 0 \in \Omega^1 / (\ker \mathcal{L}_J)^1$ means that $\dd f \in (\ker \mathcal{L}_J)^1$, so $f \in (\ker \mathcal{L}_J \dd)^0$. Thus we have
\begin{equation*}
H^0 \left( \frac{\Omega^{\bu}}{(\ker \mathcal{L}_J)^{\bu}} \right) = \frac{(\ker \mathcal{L}_J \dd)^0}{(\ker \mathcal{L}_J)^0}.
\end{equation*}
We have thus reduced the problem to showing that the subspace inclusion $(\ker \mathcal{L}_J)^0 \subseteq (\ker \mathcal{L}_J \dd)^0$ is actually an equality. Let $f \in (\ker \mathcal{L}_J \dd)^0$. Since $J$ is integrable, by~\eqref{eq:LJd}, we have $\partial \bar \partial f = 0$. We claim that $f$ must be locally constant. This is well-known, but we give the argument for completeness. Let $f = u + i v$. Since $M$ is compact, the function $u$ attains a maximum at some point $x_0 \in M$, with $u(x_0) = t_0 \in \R$. Consider the set $V = u^{-1} (t_0)$. Since $u$ is continuous, $V = u^{-1} (t_0)$ is closed. Let $x \in V$. Let $U$ be the domain of a holomorphic coordinate chart centred at $x$. With respect to this chart, $U$ corresponds to an open neighbourhood of $0$ in $\C^m$ and $\frac{\partial^2 f}{\partial z_i \partial \bar{z_j}} = 0$ for all $i,j$. In particular, we have $\Sigma_{i=1}^m  \frac{\partial^2 f}{\partial z_i \partial \bar{z_i}} = 0$, which says that $f$ is harmonic with respect to the standard Euclidean metric on the domain of the holomorphic chart, hence so is its real part $u$. Since $u(y) \leq u(x)$ for all $y \in U$, by the maximum principle $u$ is constant on $U$. Thus $U \subseteq V$, so $V$ is open. Since $V$ is both open and closed, $V$ is a connected component of $M$. The same argument applies to the imaginary part $v$ of $f$. Thus $f$ is locally constant, so $\dd f = 0$. But then $\mathcal{L}_J f = 0$, and hence $(\ker \mathcal{L}_J \dd)^0 \subseteq (\ker \mathcal{L}_J)^0$, which is what we wanted to show.
\end{proof}

\begin{rmk} \label{rmk:cptessential}
The assumption of compactness in Proposition~\ref{prop:cptcplxinj} is essential, as we show in Example~\ref{ex:C} later in this section.
\end{rmk}

For the remainder of this subsection we consider the particular case of K\"ahler manifolds. Recall that $(M, g, J)$ is K\"ahler if $J$ is integrable and $\dd \omega = 0$, where $\omega(X, Y) = g(JX, Y)$ is the $(1,1)$-form associated to a Riemannian metric $g$ with respect to which $J$ is orthogonal. 
\begin{lemma} \label{lemma:Kahler}
Suppose $(M, g, J)$ is K\"ahler. Let $f \in \Omega^0 (M)$. If $\partial \bar \partial f = 0$, then $\Delta_{\dd} f = 0$, where $\Delta_{\dd}$ is the Hodge Laplacian of the metric $g$.
\end{lemma}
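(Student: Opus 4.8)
The plan is to reduce the statement to the standard fact that on a compact K\"ahler manifold the Hodge Laplacians satisfy $\Delta_{\dd} = 2 \Delta_{\delbar} = 2 \Delta_{\del}$. Since $f$ is a function, $\del f$ is a $(1,0)$-form and $\delbar f$ is a $(0,1)$-form, and $\Delta_{\delbar} f = \delbar^* \delbar f$ (the other term $\delbar \delbar^* f$ vanishes on functions for degree reasons). So it suffices to show $\delbar f = 0$, or equivalently (by taking the $L^2$ inner product $\langle \delbar^* \delbar f, f\rangle = \|\delbar f\|^2$) that $\delbar^*\delbar f = 0$, i.e. that $f$ is $\delbar$-harmonic.

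First I would invoke compactness to pass from the pointwise hypothesis $\del\delbar f = 0$ to an integral identity. The natural move is to pair $\del\delbar f$ against something and integrate by parts. Concretely, consider $\int_M \del \delbar f \wedge \star \overline{(\text{something})}$; cleaner is to use the K\"ahler identity directly. Since $\del\delbar f = 0$ and (in the integrable case) $\mathcal{L}_J \dd f = 2i\,\del\delbar f$ by~\eqref{eq:LJd}, the hypothesis is exactly $f \in (\ker \mathcal{L}_J \dd)^0$. Then the argument already given in the proof of Proposition~\ref{prop:cptcplxinj} shows that in each holomorphic coordinate chart $\sum_i \partial^2 f /\partial z_i \partial \bar z_i = 0$, so $f$ is locally (Euclidean-)harmonic; but I want the stronger conclusion $\Delta_{\dd} f = 0$ globally, which does not follow merely from being locally harmonic in coordinate charts (the coordinate Laplacian is not the Riemannian one). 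So instead I would argue as follows: on a compact manifold without boundary, $\int_M \del\delbar f \wedge \overline{\del\delbar f} \, (\text{suitably contracted})$ — better, integrate the identity $0 = \langle \del\delbar f, \del\delbar f\rangle$ is trivial; the useful one is $0 = \langle \del\delbar f, \cdot\rangle$. Let me use: since $\del\delbar f = 0$, certainly $\delbar^*\del\delbar f = 0$, and by a K\"ahler identity $[\delbar^*, \del] = 0$ wait — the relevant K\"ahler identity is $\del\delbar^* + \delbar^*\del = 0$ (i.e. $[\del,\delbar^*] = 0$), so $\delbar^*\del\delbar f = -\del\delbar^*\delbar f = -\del(\Delta_{\delbar} f)$. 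Hence $\del \Delta_{\delbar} f = 0$. Since $\Delta_{\delbar}f$ is a function and $\del$ is injective on functions modulo constants (locally), $\Delta_{\delbar} f$ is locally constant; taking its integral over compact $M$, $\int_M \Delta_{\delbar} f = \int_M \delbar^*\delbar f = 0$ since $\Delta_{\delbar} f = \delbar^*\delbar f$ is $L^2$-orthogonal... actually $\int_M \delbar^*\delbar f \, \mathrm{dV} = \langle \delbar^*\delbar f, 1\rangle = \langle \delbar f, \delbar 1 \rangle = 0$. A nonnegative-... no, $\Delta_{\delbar} f$ need not be real. But $\Delta_{\delbar} f$ locally constant plus integrating to zero on each component forces $\Delta_{\delbar} f \equiv 0$. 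Then $0 = \langle \Delta_{\delbar} f, f\rangle = \|\delbar f\|^2$, so $\delbar f = 0$, hence $\Delta_{\delbar} f = 0$ (already known) and so $\Delta_{\dd} f = 2\Delta_{\delbar} f = 0$.

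Carrying this out, the key steps in order are: (1) record the K\"ahler identities $\Delta_{\dd} = 2\Delta_{\delbar}$ and $\del\delbar^* + \delbar^*\del = 0$, citing a standard reference; (2) from $\del\delbar f = 0$ deduce $\delbar^*\del\delbar f = 0$ and rewrite it using the identity as $\del(\delbar^*\delbar f) = 0$, i.e.\ $\del(\Delta_{\delbar} f) = 0$; (3) conclude $\Delta_{\delbar}f$ is locally constant (it is killed by $\del$, hence by $\dd$ since it is also automatically killed by $\delbar$ — wait, need $\delbar$ of it to vanish too; instead argue $\del g = 0$ for a function $g$ forces $g$ locally constant directly since $\del$ of a function has components $\partial g/\partial z_i$); (4) integrate over compact $M$ against the volume form and use $\int_M \Delta_{\delbar} f \,\mathrm{dV} = \|\delbar(1)\|\cdot(\ldots) = 0$ to upgrade ``locally constant'' to ``identically zero''; (5) pair $\Delta_{\delbar}f = 0$ with $f$ to get $\|\delbar f\|_{L^2}^2 = 0$, so $\delbar f = 0$; (6) then $\Delta_{\delbar} f = 0$ and $\Delta_{\dd} f = 2\Delta_{\delbar} f = 0$.

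The main obstacle I anticipate is being careful about the complex-valued setting: $f$, and hence $\Delta_{\delbar} f$, is $\C$-valued, so the maximum-principle shortcut used in Proposition~\ref{prop:cptcplxinj} must be replaced by the clean $L^2$ argument above, and one must make sure the step ``$\del g = 0 \Rightarrow g$ locally constant'' is justified (it is immediate in holomorphic coordinates) and that the integration-by-parts identity $\int_M \Delta_{\delbar} f\,\mathrm{dV} = 0$ is applied correctly (it is just $\langle \delbar^*\delbar f, 1\rangle = \langle \delbar f, \delbar 1\rangle = 0$, with no sign or conjugation pitfalls). An alternative, perhaps shorter, route avoiding the intermediate locally-constant step: directly compute $\|\del\delbar f\|^2 \geq c\,\|\Delta_{\dd} f\|^2$ type inequality — but that is not generally true, so I would stick with the route above. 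Everything else is routine once the K\"ahler identities are in hand.
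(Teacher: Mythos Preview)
Your argument has a genuine gap: the lemma does \emph{not} assume $M$ is compact, yet every substantive step you propose---the $L^2$ pairings, the integration $\int_M \Delta_{\delbar} f\,\mathrm{dV} = 0$, the deduction $\|\delbar f\|^2 = 0$---requires compactness. The paper in fact invokes this lemma in a non-compact setting (Example~\ref{ex:C}), so you cannot simply add compactness as a standing hypothesis.

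There is also a secondary error in your step (3): for a function $g$, the condition $\del g = 0$ means $\partial g/\partial z_i = 0$ in holomorphic coordinates, i.e.\ $g$ is \emph{antiholomorphic}, not locally constant. This could be repaired by observing that $\Delta_{\delbar} f = \tfrac{1}{2}\Delta_{\dd} f$ is real (since $f$ is real and $\Delta_{\dd}$ is a real operator), whence $\delbar(\Delta_{\delbar} f) = \overline{\del(\Delta_{\delbar} f)} = 0$ as well, giving $\dd(\Delta_{\delbar} f)=0$. But you explicitly worry that ``$\Delta_{\delbar} f$ need not be real'' and never make this fix.

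The paper's proof avoids all of this with a one-line pointwise computation: apply the dual Lefschetz operator $\Lambda$ to $\del\delbar f$ and use the K\"ahler identity $[\Lambda,\del] = i\delbar^*$ to get
\[
\Lambda(\del\delbar f) = [\Lambda,\del]\delbar f + \del\Lambda\delbar f = i\delbar^*\delbar f + 0 = i\Delta_{\delbar} f = \tfrac{i}{2}\Delta_{\dd} f,
\]
so $\del\delbar f = 0$ gives $\Delta_{\dd} f = 0$ immediately, with no compactness needed. You already cite $\Delta_{\dd} = 2\Delta_{\delbar}$; the missing ingredient is $[\Lambda,\del] = i\delbar^*$ rather than $[\del,\delbar^*]=0$.
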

\begin{proof}
This follows from the \emph{K\"ahler identities} and their consequences. A good reference is~\cite{Huybrechts}. Specifically, we use the facts that $[\Lambda, \partial] = i \bar \partial^*$ and $\Delta_{\dd} = 2 \Delta_{\bar \partial}$ on K\"ahler manifolds. Here $\Lambda$ is the adjoint of the wedge product with $\omega$, so in particular it has bidegree $(-1,-1)$. Then we have
\begin{align*}
\Lambda (\partial \bar \partial f) & = ([\Lambda, \partial] + \partial \Lambda) (\bar \partial f) \\
& = i \bar \partial^* \bar \partial f + 0 = i \Delta_{\bar \partial} f = \tfrac{i}{2} \Delta_{\dd} f.
\end{align*}
Thus $\partial \bar \partial f = 0$ implies $\Delta_{\dd} f = 0$.
\end{proof}

Later, in Corollary~\ref{cor:mainthmKahler}, we will show that the $J$-cohomology of a \emph{compact K\"ahler} manifold is isomorphic to its de Rham cohomology. But this will use the deep fact that K\"ahler manifolds satisfy the $\partial \bar \partial$-lemma. The next result is a fairly direct proof of the isomorphism $H^k_J \cong H^k_{\DR}$ for compact K\"ahler manifolds when $k = 0, 1, 2m$, using Hodge theory. The authors include it here as it is instructive.

\begin{prop} \label{prop:isomKahler1}
Let $(M, g, J)$ be a \emph{compact K\"ahler} manifold. Then $H^k_J (M) \cong H^k_{\DR} (M)$ for $k = 0, 1, 2m$.
\end{prop}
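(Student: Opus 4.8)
The plan is to handle the three degrees $k=0,1,2m$ separately, using what we already know plus Hodge theory. For $k=0$ there is nothing to do: Lemma~\ref{lemma:JcohomDR} already gives $H^0_J = H^0_{\DR}$. The remaining cases will use the $\dd$-harmonic representatives of de Rham classes guaranteed by compactness, together with the K\"ahler identities, to show that harmonic forms automatically lie in $\ker \mathcal{L}_J$ and that this forces the canonical map $H^k_J \to H^k_{\DR}$ to be an isomorphism.

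For $k=1$: by Proposition~\ref{prop:cptcplxinj} the canonical map $H^1_J \to H^1_{\DR}$ is injective, so it suffices to show surjectivity. Given a class in $H^1_{\DR}$, represent it by the unique $\dd$-harmonic $1$-form $\alpha$. Since $M$ is compact K\"ahler, harmonic forms are bidegree-pure-decomposable and, more to the point, $\Delta_{\dd}$-harmonic implies $\partial$-harmonic and $\bar\partial$-harmonic (because $\Delta_{\dd} = 2\Delta_{\bar\partial} = 2\Delta_{\partial}$ on a K\"ahler manifold); in particular $\partial\alpha = 0$ and $\bar\partial\alpha = 0$ for a harmonic $1$-form, since a harmonic form is both $\bar\partial$-closed and $\bar\partial^*$-closed, and on a $1$-form bidegree considerations (the $(1,0)$ and $(0,1)$ components are separately harmonic) give $\partial\alpha=\bar\partial\alpha=0$. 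By~\eqref{eq:HJ1integrable}, $\alpha \in (\ker\mathcal{L}_J)^1 \cap (\ker\dd)^1$, which by Lemma~\ref{lemma:JcohomDR} is exactly a $J$-cocycle in degree $1$, and it obviously maps to the given de Rham class. Hence the map is onto, and combined with injectivity we get $H^1_J \cong H^1_{\DR}$.

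For $k = 2m$: here every $2m$-form is automatically $\dd$-closed, so $(\ker\dd)^{2m} = \Omega^{2m}$, and $\dd((\ker\mathcal{L}_J)^{2m-1}) \subseteq \dd(\Omega^{2m-1})$; conversely one checks $\Omega^{2m-1} \subseteq \ker\mathcal{L}_J$ is generally false, so the computation of $H^{2m}_J = (\ker\mathcal{L}_J)^{2m}/\dd((\ker\mathcal{L}_J)^{2m-1})$ needs care. I would instead use the duality~\eqref{eq:LJspecial}: on $\Omega^{2m}$ we have $\mathcal{L}_J = -\dd\iota_J$, and $\iota_J$ is (a scalar multiple of) an isomorphism of $\Omega^{2m}$ since $J$ acts on the top exterior power as multiplication by a nonzero scalar ($\iota_J$ on $\Omega^{2m}$ is multiplication by the trace-type quantity coming from $J$, which for $J^2=-I$ in real dimension $2m$ is nonzero — concretely, on a volume form $\iota_J$ scales by a nonzero constant). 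Therefore $(\ker\mathcal{L}_J)^{2m} = \Omega^{2m}$, so $H^{2m}_J = \Omega^{2m}/\dd((\ker\mathcal{L}_J)^{2m-1})$. To identify this with $H^{2m}_{\DR} = \Omega^{2m}/\dd\Omega^{2m-1}$ it remains to show $\dd((\ker\mathcal{L}_J)^{2m-1}) = \dd(\Omega^{2m-1})$; the inclusion $\subseteq$ is clear, and for $\supseteq$ I would argue that any exact $2m$-form $\dd\beta$ can be written as $\dd\beta'$ with $\beta' \in \ker\mathcal{L}_J$, using Hodge decomposition $\beta = \beta_{harm} + \dd\gamma + \dd^*\delta$ and the fact that $\mathcal{L}_J$ commutes with $\dd$: replacing $\beta$ by its $\dd^*\delta$ piece (the harmonic and $\dd$-exact pieces don't change $\dd\beta$), and then using $2i\partial\bar\partial$-type identities plus the $\partial\bar\partial$-lemma (available on compact K\"ahler manifolds) to correct $\dd^*\delta$ into $\ker\mathcal{L}_J$. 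Actually a cleaner route: by Serre/Poincar\'e duality pair $H^{2m}_J$ against $H^0$ of a dual complex, or simply observe $\dim H^{2m}_{\DR} = 1$ per connected component and exhibit the volume form as a nonzero $J$-cohomology class that is not $\dd((\ker\mathcal{L}_J)^{2m-1})$-exact (if it were, $\dd\beta' = \vol$ with $\mathcal{L}_J\beta'=0$ would integrate to $0 = \int_M \vol$, a contradiction). This shows $H^{2m}_J \ne 0$ and surjects onto $H^{2m}_{\DR} \cong \R$ per component, and injectivity then follows by dimension count once we know $\dim H^{2m}_J \le 1$ per component, which follows from the $\supseteq$ inclusion argument above.

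\textbf{Main obstacle.} The genuinely delicate case is $k = 2m$: unlike $k=1$, there is no a priori injectivity statement, and controlling $\dd((\ker\mathcal{L}_J)^{2m-1})$ requires either the $\partial\bar\partial$-lemma (which the authors want to keep "elementary" and prove independently later) or a Hodge-theoretic correction argument showing every exact top form is $\dd$ of an $\mathcal{L}_J$-closed form. I expect the cleanest self-contained argument is the integration/duality observation: $\int_M : \Omega^{2m} \to \R$ kills $\dd((\ker\mathcal{L}_J)^{2m-1})$ and is surjective, so it descends to a surjection $H^{2m}_J \twoheadrightarrow \R$ per component; the reverse inequality $\dim H^{2m}_J \le b_{2m}$ is where one must work, and I would get it from the surjectivity of $\dd : (\ker\mathcal{L}_J)^{2m-1} \cap (\ker\dd$-orthogonal complement$) \to (\text{exact }2m\text{-forms})$, established via the Hodge decomposition together with $[\mathcal{L}_J,\dd]=0$ and the $\Delta_{\dd}=2\Delta_{\bar\partial}$ K\"ahler identity applied to the codifferential piece.
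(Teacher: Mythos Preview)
Your treatment of $k=0$ and $k=1$ is fine and essentially matches the paper's: the paper also shows directly that $H^1_J$ coincides with the space of $\dd$-harmonic $1$-forms, using that a harmonic form is $\partial$- and $\bar\partial$-closed (so lies in $\ker\mathcal{L}_J$) and that the $\dd f$ piece of the Hodge decomposition of an $\mathcal{L}_J$-closed $1$-form must vanish by Lemma~\ref{lemma:Kahler}. Your packaging via Proposition~\ref{prop:cptcplxinj} for injectivity and harmonic representatives for surjectivity is a legitimate variant of the same argument.

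The case $k=2m$, however, has a genuine gap. You correctly reduce to showing $\dd\big((\ker\mathcal{L}_J)^{2m-1}\big) = \dd\big(\Omega^{2m-1}\big)$, and you correctly note that invoking the $\partial\bar\partial$-lemma here would defeat the purpose of the proposition (the full K\"ahler result is derived later as Corollary~\ref{cor:mainthmKahler} precisely \emph{from} the $\partial\bar\partial$-lemma). But you never actually establish the inclusion $\supseteq$: your ``Hodge decomposition plus K\"ahler identity applied to the codifferential piece'' is not carried out, and your integration argument only gives surjectivity $H^{2m}_J \to H^{2m}_{\DR}$, not the needed upper bound on $\dim H^{2m}_J$. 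Also, your justification that $(\ker\mathcal{L}_J)^{2m} = \Omega^{2m}$ via ``$\iota_J$ is an isomorphism on $\Omega^{2m}$'' is beside the point; the real reason is simply that $\mathcal{L}_J$ raises degree by one and $\Omega^{2m+1}=0$.

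The paper supplies the missing idea, and it is an explicit construction rather than an abstract correction. Any exact top form can be written as $(\Delta_{\dd}\phi)\,\omega^m$ for some function $\phi$. One then computes $\dd\mathcal{L}_J\phi = -2i\,\partial\bar\partial\phi$ and decomposes this $(1,1)$-form as $\sigma\omega + \gamma$ with $\gamma$ primitive. Applying $\Lambda$ and using Lemma~\ref{lemma:Kahler} gives $m\sigma = \Delta_{\dd}\phi$, so
\[
(\Delta_{\dd}\phi)\,\omega^m \;=\; m\,\sigma\,\omega^m \;=\; m\,(\dd\mathcal{L}_J\phi)\wedge\omega^{m-1} \;=\; m\,\dd\mathcal{L}_J\big(\phi\,\omega^{m-1}\big),
\]
where the last equality uses $\dd\omega = 0$ and $\mathcal{L}_J\omega = 0$. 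Since $(\mathcal{L}_J)^2 = 0$ in the integrable case, $\mathcal{L}_J(\phi\,\omega^{m-1}) \in (\ker\mathcal{L}_J)^{2m-1}$, and we are done. This is the step your proposal is missing.
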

\begin{proof}
The case $k=0$ is always true by Lemma~\ref{lemma:JcohomDR}. Consider the case $k=1$. By Lemma~\ref{lemma:JcohomDR}, we have $H^1_J = (\ker \mathcal{L}_J)^1 \cap (\ker \dd)^1$. Let $\alpha$ be a closed $1$-form. By Hodge theory, we can write $\alpha = \alpha_{\mathcal H} + \dd f$ for some $\Delta_{\dd}$-harmonic $1$-form $\alpha_{\mathcal H}$ and some smooth function $f$. Suppose that $\alpha \in \ker \mathcal{L}_J$ as well. Then $\mathcal{L}_J \alpha_{\mathcal H} + \mathcal{L}_J \dd f = 0$. Since $\alpha_{\mathcal H}$ is harmonic, it is $\dd$-closed, and hence also $\partial$-closed and $\bar\partial$-closed. In particular by~\eqref{eq:LJ} we have $\mathcal{L}_J \alpha_{\mathcal H} = 0$. Then~\eqref{eq:LJd} and Lemma~\ref{lemma:Kahler} give $\Delta_{\dd} f = 0$, so $f$ is locally constant and thus $\dd f = 0$. We have thus shown that $H^1_J$ is precisely the space of harmonic $1$-forms on $M$, which by Hodge theory is isomorphic to $H^1_{\DR}$.

Finally, consider the case $k = 2m$. By Hodge theory, the space $H^{2m}_{\DR}$ is isomorphic to the space of harmonic $2m$-forms, which are the forms of the type $f \omega^m$ for $f$ a harmonic function. We have $H^{2m}_J = \Omega^{2m} (M) / \dd (\ker \mathcal{L}_J)^{2m-1}$. Let $\alpha \in \Omega^{2m} (M)$. Then $\alpha = h \omega^m$ for some smooth function $h$ on $M$. By Hodge theory, we can write $h = f + \Delta_{\dd} \phi$ for some harmonic function $f$ and some smooth function $\phi$. Thus, to establish that $H^{2m}_J \cong H^{2m}_{\DR}$, it suffices to show that $(\Delta_{\dd} \phi) \omega^m$ always lies in the image of $\dd \mathcal{L}_J$, because by~\eqref{eq:LJ} we have $(\mathcal{L}_J)^2 = 0$ in the integrable case, so a $(2m-1)$-form in the image of $\mathcal{L}_J$ will necessarily be in the kernel of $\mathcal{L}_J$. We observe by~\eqref{eq:LJd} that $\dd \mathcal{L}_J \phi = - \mathcal{L}_J \dd \phi = - 2 i \partial \bar \partial \phi$. Now the $(1,1)$-form $\dd \mathcal{L}_J \phi = - 2 i \partial \bar \partial \phi$ can be written in the form $\sigma \omega + \gamma$, for some function $\sigma$ and some \emph{primitive} $(1,1)$-form $\gamma$. This means that $\Lambda \gamma = 0$ and $\gamma \wedge \omega^{m-1} = 0$. We also have $\Lambda \omega = m$. (See~\cite{Huybrechts}, for example.) Taking $\Lambda$ of both sides and using the proof of Lemma~\ref{lemma:Kahler}, we find that
\begin{equation*}
\Lambda (\dd \mathcal{L}_J \phi) = \Lambda( - 2 i \partial \bar \partial \phi ) = \Delta_{\dd} \phi = \Lambda (\sigma \omega + \gamma) = m \sigma.
\end{equation*}
Thus we have $\Delta_{\dd} \phi = m \sigma$, and therefore we have
\begin{equation*}
(\dd \mathcal{L}_J \phi) \wedge \omega^{m-1} = (\sigma \omega + \gamma) \wedge \omega^{m-1} = \sigma \omega^m = \tfrac{1}{m} (\Delta_{\dd} \phi) \omega^m.
\end{equation*}
But then using the fact that $\mathcal{L}_J \omega = 0$ and $\dd \omega = 0$ in the K\"ahler case, and the fact that both $\mathcal{L}_J$ and $\dd$ are derivations, we find
\begin{equation*}
\tfrac{1}{m} (\Delta_{\dd} \phi) \wedge \omega^m = (\dd \mathcal{L}_J \phi) \wedge \omega^{m-1} = \dd \mathcal{L}_J (\phi \omega^{m-1}),
\end{equation*}
which is what we wanted to show.
\end{proof}

\begin{rmk} \label{rmk:Riemannsurface}
Proposition~\ref{prop:isomKahler1} is enough to conclude that the $J$-cohomology is isomorphic to the de Rham cohomology for any compact Riemann surface. However, as we mentioned above, we will show later that this isomorphism holds for any compact K\"ahler manifold and in fact such an isomorphism holds for any almost complex manifold (not necessarily compact) satisfying the $\dd \mathcal{L}_J$-lemma, which reduces to the $\partial \bar \partial$-lemma in the integrable case. This is all discussed in Section~\ref{sec:del-delbar}.
\end{rmk}

The next example shows that both Propositions~\ref{prop:isomKahler1} and~\ref{prop:cptcplxinj} fail in the non-compact case.
\begin{ex} \label{ex:C}
Let $M$ be a K\"ahler manifold with $b^1 = \dim H^1_{\DR} = 0$. We will compute $H^1_J$. By Lemma~\ref{lemma:JcohomDR}, we have $H^1_J = (\ker \mathcal{L}_J)^1 \cap (\ker \dd)^1$. Let $\alpha \in (\ker \dd)^1$. Since $b^1 = 0$, we have $\alpha = \dd f$ for some function $f$. Then $\mathcal{L}_J \alpha = \mathcal{L}_J \dd f = 2 i \partial \bar \partial f = 0$ by~\eqref{eq:LJd} and thus since $M$ is K\"ahler, $f$ is harmonic by Lemma~\ref{lemma:Kahler}. Thus we have shown that
\begin{equation*}
H^1_J (M) = \{ \dd f : f \text{ is a harmonic function on $M$} \}.
\end{equation*}
In particular, if $M = \C$ then $H^1_J$ is \emph{infinite-dimensional}.
\end{ex}

\begin{rmk} \label{rmk:Cexample}
Example~\ref{ex:C} shows that $H^1_J (\C) \ncong H^1_J (\text{point})$, and therefore the $J$-cohomology \emph{does not} satisfy homotopy invariance. Moreover, there is no Mayer-Vietoris sequence, Poincar\'e duality, or K\"unneth formula for $H^k_J$ in general. However, $H^k_J$ does have functoriality, as we showed in Proposition~\ref{prop:functoriality}.
\end{rmk}

\subsection{The dimension of $H_J^{\bu} (M)$ for compact $M$} \label{sec:finitedim}

In this section, we study the $J$-cohomology of a \emph{compact} almost complex manifold $(M,J)$. We will prove that when $M$ is a compact \emph{complex} manifold, the $J$-cohomology groups are all \emph{finite-dimensional}. In the general (non-integrable) almost complex case, we only have a partial result, which says that the spaces $H^k_J (M)$ are finite-dimensional for $k = 0, 1, 2m$. The main ingredient for the proof of both of these results is Hodge theory.

First we consider the complex case. We will need to make use of the \emph{Bott-Chern Laplacian}. We state only the facts about this operator and its corresponding Hodge decomposition that we will need. More details can be found, for example, in~\cite[Section 2b]{Schweitzer}.

\begin{thm}[\cite{Schweitzer}] \label{thm:BC-Hodge}
Let $M$ be a compact complex manifold. Equip $M$ with a Riemannian metric $g$ compatible with the complex structure $J$. There is a degree-preserving fourth order elliptic linear differential operator $\Delta_{\BC} : \Omega^{\bullet} (M) \otimes \C \to \Omega^{\bullet} (M) \otimes \C$, called the \emph{Bott-Chern Laplacian}, on the space of complex-valued forms. For each $k = 0, 1 , \ldots, 2m$, it induces a Hodge-type decomposition on $\Omega^k(M) \otimes \C$ as follows:
\begin{equation*}
\Omega^k \otimes \C = (\ker \Delta_{\BC})^k \underset{\perp}{\oplus} (\im \partial \bar{\partial})^k \underset{\perp}{\oplus} (\im \partial^* + \im \bar{\partial}^*)^k
\end{equation*}
where $\partial^*$ and $\bar \partial^*$ denote the formal adjoints with respect to $g$. This decomposition is orthogonal with respect to $g$. Moreover, $(\ker \Delta_{\BC})^k$ is finite-dimensional and we have
\begin{equation} \label{eq:BC2}
\Delta_{\BC} \alpha = 0 \iff \partial \alpha =\bar{\partial} \alpha = \partial^* \bar{\partial}^* \alpha = 0
\end{equation}
and
\begin{equation} \label{eq:BC3}
\partial \alpha = \bar{\partial} \alpha=0 \iff \alpha \in (\ker \Delta_{\BC})^k \underset{\perp}{\oplus} (\im \partial \bar{\partial})^k.
\end{equation}
\end{thm}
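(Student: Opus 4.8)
The plan is to reproduce the argument of Schweitzer~\cite{Schweitzer}, which is Hodge theory applied to a judiciously chosen elliptic operator. First I would introduce the operator. Working on $\Omega^{\bu}(M) \otimes \C$ with the Hermitian $L^2$ inner product $\langle \cdot, \cdot \rangle$ induced by $g$, define
\begin{equation*}
\Delta_{\BC} = \partial\bar\partial\bar\partial^*\partial^* + \bar\partial^*\partial^*\partial\bar\partial + \bar\partial^*\partial\partial^*\bar\partial + \partial^*\bar\partial\bar\partial^*\partial + \partial^*\partial + \bar\partial^*\bar\partial,
\end{equation*}
where $\partial^*, \bar\partial^*$ are the formal adjoints with respect to $g$. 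This is manifestly degree-preserving, formally self-adjoint, and nonnegative, being a sum of operators of the shapes $A^*A$ and $AA^*$; it is fourth order because of the first four terms; and a direct computation gives the energy identity
\begin{equation*}
\langle \Delta_{\BC} \alpha, \alpha \rangle = \| \partial \alpha \|^2 + \| \bar\partial \alpha \|^2 + \| \partial\bar\partial \alpha \|^2 + \| \partial^*\bar\partial \alpha \|^2 + \| \bar\partial^*\partial \alpha \|^2 + \| \bar\partial^*\partial^* \alpha \|^2.
\end{equation*}
In particular $\Delta_{\BC}\alpha = 0$ forces $\partial\alpha = \bar\partial\alpha = 0$, whereupon the three middle terms vanish automatically and the only remaining condition is $\bar\partial^*\partial^*\alpha = 0$; since $\partial\bar\partial = -\bar\partial\partial$ implies $\bar\partial^*\partial^* = -\partial^*\bar\partial^*$, this is exactly~\eqref{eq:BC2}.

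The technical heart of the proof is to show that $\Delta_{\BC}$ is elliptic, i.e.\ that its order-four principal symbol $\sigma_4(\Delta_{\BC})(\xi)$ is invertible for every nonzero real covector $\xi$. Splitting $\xi = \xi^{1,0} + \xi^{0,1}$, this reduces to a pointwise statement about the operators $e = \xi^{1,0} \wedge (\cdot)$ and its conjugate $\bar e = \xi^{0,1} \wedge (\cdot)$, together with their adjoints $e^*, \bar e^*$, acting on $\Lambda^{p,q}$ of a Hermitian vector space: one finds $\sigma_4(\Delta_{\BC})(\xi) = e\bar e\bar e^*e^* + \bar e^*e^*e\bar e + \bar e^*ee^*\bar e + e^*\bar e\bar e^*e$, a nonnegative self-adjoint endomorphism, and must check that it has trivial kernel. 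I expect \emph{this} to be the main obstacle: it is elementary linear algebra but genuinely delicate, in particular on forms of extreme bidegree, where several of the four summands degenerate and one must confirm that invertibility nonetheless persists.

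Granting ellipticity, the remaining steps are formal. By the standard Hodge theory for a formally self-adjoint elliptic operator of positive order on a compact manifold, $(\ker \Delta_{\BC})^k$ is finite-dimensional, elliptic regularity makes its elements smooth, and there is an $L^2$-orthogonal decomposition with Green's operator $G$ satisfying $\mathrm{id} = H + \Delta_{\BC} G$ on $\Omega^k \otimes \C$, where $H$ is the projection onto $(\ker \Delta_{\BC})^k$. Expanding $\Delta_{\BC}\beta$ term by term shows each term lies in $\im \partial\bar\partial$, $\im \partial^*$, or $\im \bar\partial^*$, so $\im \Delta_{\BC} \subseteq (\im \partial\bar\partial) + (\im \partial^*) + (\im \bar\partial^*)$; hence $\alpha = H\alpha + \Delta_{\BC} G \alpha$ exhibits every smooth $\alpha$ as a sum from $(\ker\Delta_{\BC})^k$, $(\im\partial\bar\partial)^k$, and $(\im\partial^* + \im\bar\partial^*)^k$. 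These three subspaces are pairwise $L^2$-orthogonal: $\langle \partial\bar\partial\beta, \partial^*\gamma \rangle = \langle \partial^2\bar\partial\beta, \gamma \rangle = 0$ and $\langle \partial\bar\partial\beta, \bar\partial^*\gamma \rangle = \langle \bar\partial\partial\bar\partial\beta, \gamma \rangle = 0$ give $\im\partial\bar\partial \perp (\im\partial^* + \im\bar\partial^*)$, while for $\alpha \in \ker\Delta_{\BC}$ we have $\partial\alpha = \bar\partial\alpha = \bar\partial^*\partial^*\alpha = 0$, so $\langle \alpha, \partial^*\gamma \rangle = \langle \alpha, \bar\partial^*\gamma \rangle = 0$ and $\langle \alpha, \partial\bar\partial\beta \rangle = \langle \bar\partial^*\partial^*\alpha, \beta \rangle = 0$. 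A sum of pairwise-orthogonal subspaces is direct, so this is the asserted orthogonal Hodge decomposition.

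Finally,~\eqref{eq:BC3} drops out. If $\partial\alpha = \bar\partial\alpha = 0$, write $\alpha = \alpha_{\mathcal H} + \partial\bar\partial\beta + \eta$ with $\alpha_{\mathcal H} \in (\ker\Delta_{\BC})^k$ and $\eta = \partial^*\gamma_1 + \bar\partial^*\gamma_2$; since $\alpha_{\mathcal H}$ and $\partial\bar\partial\beta$ are both $\partial$- and $\bar\partial$-closed, so is $\eta$, and then $\| \eta \|^2 = \langle \partial^*\gamma_1, \eta \rangle + \langle \bar\partial^*\gamma_2, \eta \rangle = \langle \gamma_1, \partial\eta \rangle + \langle \gamma_2, \bar\partial\eta \rangle = 0$, forcing $\eta = 0$; hence $\alpha \in (\ker\Delta_{\BC})^k \oplus (\im\partial\bar\partial)^k$. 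Conversely every element of $(\ker\Delta_{\BC})^k$ is $\partial$- and $\bar\partial$-closed by~\eqref{eq:BC2}, and $\partial(\partial\bar\partial\beta) = 0 = \bar\partial(\partial\bar\partial\beta)$, which gives the reverse inclusion and completes the argument.
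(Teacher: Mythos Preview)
Your argument is correct and is precisely the Schweitzer--Kodaira--Spencer proof; note however that the paper does not actually prove this theorem but merely quotes it from~\cite{Schweitzer}, so there is no ``paper's own proof'' to compare against. Your write-up faithfully supplies what the paper treats as a black box: the explicit operator, the energy identity giving~\eqref{eq:BC2}, the ellipticity of the fourth-order symbol (which you rightly flag as the delicate step), and the standard Hodge-theoretic deduction of the orthogonal decomposition and of~\eqref{eq:BC3}.
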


We now use Theorem~\ref{thm:BC-Hodge} to prove the finite-dimensionality of the $J$-cohomology in the compact complex case.

\begin{thm} \label{thm:fd-complex}
Let $(M,J)$ be a compact complex manifold. Then $H^k_J (M)$ is a finite-dimensional vector space for all $k$.
\end{thm}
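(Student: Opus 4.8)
The plan is to realize $H^k_J(M)$ as a subquotient of the finite-dimensional space $(\ker \Delta_{\BC})^k$, using the Bott–Chern Hodge decomposition of Theorem~\ref{thm:BC-Hodge}. Recall that since $J$ is integrable, by~\eqref{eq:HJ1integrable} we have $(\ker \mathcal{L}_J)^k \cap (\ker \dd)^k = (\ker \partial)^k \cap (\ker \bar\partial)^k$, and by~\eqref{eq:LJd} the operator $\dd \mathcal{L}_J$ equals $2i \partial\bar\partial$ (up to sign). So after complexifying, the $J$-cohomology in degree $k$ is
\begin{equation*}
H^k_J(M) \otimes \C = \frac{(\ker \partial)^k \cap (\ker \bar\partial)^k}{\dd \big( (\ker \mathcal{L}_J)^{k-1} \big)}.
\end{equation*}
By~\eqref{eq:BC3}, the numerator is exactly $(\ker \Delta_{\BC})^k \oplus (\im \partial\bar\partial)^k$. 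The strategy is to show that the denominator $\dd\big((\ker \mathcal{L}_J)^{k-1}\big)$ contains the summand $(\im \partial\bar\partial)^k$, so that the quotient is a quotient of the finite-dimensional space $(\ker \Delta_{\BC})^k$, hence finite-dimensional.

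First I would verify the inclusion $(\im \partial\bar\partial)^k \subseteq \dd\big((\ker \mathcal{L}_J)^{k-1}\big)$. Given $\partial\bar\partial \beta$ with $\beta \in \Omega^{k-2}(M)\otimes\C$, we want to write it as $\dd\gamma$ with $\gamma \in \ker \mathcal{L}_J$. The natural candidate, suggested by~\eqref{eq:LJd}, is $\gamma = c\, \mathcal{L}_J \beta$ for an appropriate constant $c$: then $\dd \gamma = c\, \dd\mathcal{L}_J\beta = 2ic\, \partial\bar\partial\beta$, so choosing $c = \tfrac{1}{2i}$ gives $\dd\gamma = \partial\bar\partial\beta$. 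It remains to check $\mathcal{L}_J(\mathcal{L}_J\beta) = 0$, which holds because $(\mathcal{L}_J)^2 = -\mathcal{L}_N = 0$ in the integrable case by~\eqref{eq:LJsquared}. So indeed $(\im \partial\bar\partial)^k \subseteq \im(\dd\mathcal{L}_J) \subseteq \dd\big((\ker\mathcal{L}_J)^{k-1}\big)$. Consequently
\begin{equation*}
H^k_J(M)\otimes \C = \frac{(\ker\Delta_{\BC})^k \oplus (\im\partial\bar\partial)^k}{\dd\big((\ker\mathcal{L}_J)^{k-1}\big)}
\end{equation*}
is a quotient of $(\ker\Delta_{\BC})^k$, which is finite-dimensional by Theorem~\ref{thm:BC-Hodge}. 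Since $\dim_{\R} H^k_J(M) = \dim_{\C}\big(H^k_J(M)\otimes\C\big)$, the real cohomology is finite-dimensional as well.

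The main subtlety — and the step I would be most careful about — is the bookkeeping with complexification: the $J$-cohomology as defined in Definition~\ref{defn:cohom} is a cohomology of \emph{real} vector spaces, whereas the Bott–Chern decomposition lives on complex-valued forms. I would note that $\mathcal{L}_J$ and $\dd$ extend $\C$-linearly to $\Omega^\bu(M)\otimes\C$, that $(\ker\mathcal{L}_J \otimes \C)^\bu = (\ker\mathcal{L}_J)^\bu \otimes \C$ since $\mathcal{L}_J$ is real, and similarly for $\dd$; hence the complexified $J$-complex is just the original one tensored with $\C$, and taking cohomology commutes with this flat base change. Everything else is a direct substitution into Theorem~\ref{thm:BC-Hodge} via the identities~\eqref{eq:LJd} and~\eqref{eq:HJ1integrable}; no hard analysis beyond the cited elliptic theory is needed.
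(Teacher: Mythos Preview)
Your proof is correct and is essentially the same as the paper's. Both arguments complexify, invoke the Bott--Chern Hodge decomposition from Theorem~\ref{thm:BC-Hodge}, and show that the natural map $(\ker\Delta_{\BC})^k \to H^k_J\otimes\C$ is surjective by checking that any element of $(\im\partial\bar\partial)^k$ lies in $\dd\big((\ker\mathcal{L}_J)^{k-1}\big)$; your choice of primitive $\gamma = c\,\mathcal{L}_J\beta$ is in fact identical to the paper's $\bar\partial\beta - \tfrac{1}{2}\dd\beta$, since $\bar\partial - \tfrac{1}{2}\dd = \tfrac{1}{2}(\bar\partial-\partial) = \tfrac{i}{2}\mathcal{L}_J$ by~\eqref{eq:LJ}.
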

\begin{proof}
Since $\dd = \partial + \bar \partial$, by~\eqref{eq:BC2} any $\beta \in (\ker \Delta_{\BC})^k$ lies in $( (\ker \dd) \otimes \C)^k$, and thus represents an element of $H^k_J \otimes \C$. We claim that this natural map $\ker(\Delta_{\BC})^k \to H^k_J \otimes \C$ is surjective. Assuming this claim, by the finite-dimensionality of $(\ker \Delta_{\BC})^k$ it follows that $H^k_J \otimes \C$, and hence $H^k_J$, is finite-dimensional as required.

We now prove the claim. Let $\alpha \in ( (\ker \dd) \otimes \C)^k \cap ( (\ker \mathcal{L}_J) \otimes \C)^k$. Then from~\eqref{eq:HJ1integrable} we in fact have $\partial \alpha = \bar{\partial} \alpha = 0$. By~\eqref{eq:BC3}, we can write $\alpha = \beta + \partial \bar{\partial} \gamma$, for some $\beta, \gamma$ with $\Delta_{\BC} \beta = 0$.  Note that $\partial \bar{\partial} \gamma = \dd (\bar{\partial} \gamma - \tfrac{1}{2} \dd \gamma)$. But using $\partial^2 = \bar{\partial}^2 = 0$ and $\partial \bar \partial = - \bar \partial \partial$, we also have
\begin{align*}
\mathcal{L}_J (\bar{\partial} \gamma - \tfrac{1}{2} \dd \gamma) & = i (\partial - \bar{\partial}) (\bar{\partial} \gamma - \tfrac{1}{2} \dd \gamma) \\
& = i (\partial - \bar\partial)(\bar \partial \gamma - \tfrac{1}{2} \partial \gamma - \tfrac{1}{2} \bar\partial \gamma) \\
& = i ( \partial \bar \partial \gamma - \tfrac{1}{2} \partial \bar \partial \gamma + \tfrac{1}{2} \bar \partial \partial \gamma) = 0.
\end{align*}
Hence, $\alpha - \beta \in ( \dd (\ker \mathcal{L}_J) \otimes \C )^k$, and thus we have $[\alpha] = [\beta]$ as equivalence classes in $H_J^k \otimes \C$, which establishes the claim.
\end{proof}

Next we consider the general case, when $J$ need not be integrable. In this case we can only obtain partial results.

\begin{thm} \label{thm:fd-general}
Let $(M,J)$ be a compact almost complex manifold of real dimension $n=2m$. Then $H^k_J (M)$ is a finite-dimensional vector space for $k = 0,1, n$.
\end{thm}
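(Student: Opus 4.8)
The cases $k=0$ and $k=1$ are handled directly. By Lemma~\ref{lemma:JcohomDR}, $H^0_J = H^0_{\DR}$, which is finite-dimensional since $M$ is compact (so it has finitely many connected components). For $k=1$, again by Lemma~\ref{lemma:JcohomDR} we have $H^1_J = (\ker \mathcal{L}_J)^1 \cap (\ker \dd)^1$, a subspace of $(\ker \dd)^1$. The natural inclusion then descends to a linear map $H^1_J \to H^1_{\DR}$: indeed any element of $(\ker \mathcal{L}_J)^1 \cap (\ker \dd)^1$ is closed, and if it is also exact, say $\alpha = \dd f$, then $f \in (\ker \mathcal{L}_J)^0$ by~\eqref{eq:kerLJ0} so $\alpha$ is zero in $H^1_J$; hence the map is in fact injective. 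Since $H^1_{\DR}$ is finite-dimensional for compact $M$, so is $H^1_J$. (This reproves Proposition~\ref{prop:cptcplxinj} without the integrability hypothesis, since here we only need the injectivity into $H^1_{\DR}$, not surjectivity.)

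The case $k = n$ is the main one. Here $H^n_J = \Omega^n (M) / \dd \big( (\ker \mathcal{L}_J)^{n-1} \big)$, so finite-dimensionality amounts to showing that $\dd \big( (\ker \mathcal{L}_J)^{n-1} \big)$ has finite codimension in $\Omega^n(M)$. The plan is to compare this subspace with the exact $n$-forms $\dd \Omega^{n-1}(M)$, whose codimension in $\Omega^n(M)$ is $b^n = \dim H^n_{\DR} < \infty$. Thus it suffices to show that $\dd \Omega^{n-1}(M) / \dd\big( (\ker \mathcal{L}_J)^{n-1}\big)$ is finite-dimensional, i.e. that only a finite-dimensional space of exact $n$-forms fails to be $\dd$ of an $\mathcal{L}_J$-closed form. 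The key structural input specific to top degree is~\eqref{eq:LJspecial}: on $\Omega^n$ we have $\mathcal{L}_J = -\dd \iota_J$, and $\iota_J : \Omega^n \to \Omega^n$ is an isomorphism (being $J^T$ acting on the top exterior power, up to a nonzero scalar — more precisely $\iota_J$ on $\Omega^n$ is multiplication by $\tr(J^T)$-type data; in any case one checks it is invertible since $J$ is). So for $\beta \in \Omega^{n-1}$, the condition $\mathcal{L}_J \beta = 0$ has a useful reformulation, and dually we want to understand $\dd\beta$ for such $\beta$.

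Concretely, I would argue as follows. Given any $\beta \in \Omega^{n-1}(M)$, I want to modify it within its $\dd$-image by an $\mathcal{L}_J$-closed correction. Since $\mathcal{L}_J = \iota_J \dd - (-1)^0 \dd \iota_J = \iota_J\dd - \dd\iota_J$ on $\Omega^{n-1}$, and $\dd$ commutes with $\mathcal{L}_J$ by~\eqref{eq:commdL}, the form $\mathcal{L}_J\beta \in \Omega^n$ is exact: $\mathcal{L}_J \beta = \mathcal{L}_J\beta$, and in fact $\mathcal{L}_J\beta = -\dd(\iota_J\dd\beta\text{-preimage})$... — here is where I would invoke Hodge theory on the elliptic operator governing top-degree forms. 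The cleanest route: consider the second-order operator on $\Omega^{n-1}(M)$ given by $\beta \mapsto \iota_J^{-1}\mathcal{L}_J\beta$ composed appropriately, or better, work on $\Omega^n(M)$ directly. Write $\Omega^n = \mathcal{H}^n \oplus \dd\Omega^{n-1}$ (Hodge, $\mathcal{H}^n$ the harmonic $n$-forms, finite-dimensional). Every class in $H^n_J$ has a representative in $\mathcal{H}^n$ iff $\dd\Omega^{n-1} \subseteq \dd(\ker\mathcal{L}_J)^{n-1}$, which may fail; the failure is measured by $\dd\Omega^{n-1}/\dd(\ker\mathcal{L}_J)^{n-1} \cong \Omega^{n-1}/\big((\ker\mathcal{L}_J)^{n-1} + (\ker\dd)^{n-1}\big)$. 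Now $(\ker\dd)^{n-1}$ has finite codimension in $(\ker \mathcal{L}_J \dd)^{n-1}$ only if... — so the real claim to nail down is: $(\ker \mathcal{L}_J)^{n-1} + (\ker \dd)^{n-1}$ has finite codimension in $\Omega^{n-1}(M)$. Equivalently, the map $\Omega^{n-1} \to \Omega^n \oplus \Omega^n$, $\beta \mapsto (\dd\beta, \mathcal{L}_J\beta)$ followed by projection of the $\mathcal{L}_J\beta$ component modulo $\mathcal{L}_J(\ker\dd)^{n-1}$... . The hard part — and the step I expect to be the main obstacle — is establishing this finite-codimensionality: it requires identifying an elliptic (or at least finite-index / overdetermined-elliptic) operator built from $\dd$ and $\mathcal{L}_J = -\dd\iota_J$ in top degree, and applying the general elliptic estimate. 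I expect the argument to reduce, via $\mathcal{L}_J = -\dd\iota_J$ on $\Omega^n$ and the invertibility of $\iota_J$ there, to showing that $\{\dd\gamma : \gamma \in \Omega^{n-1},\ \dd\iota_J\gamma \in \dd\Omega^{n-2}\}$ — wait, $\iota_J\gamma \in \Omega^{n-1}$, so $\dd\iota_J\gamma \in \dd\Omega^{n-1}$ automatically; the condition $\mathcal{L}_J\gamma = 0$ for $\gamma \in \Omega^{n-1}$ reads $\iota_J\dd\gamma = \dd\iota_J\gamma$. Apply $\iota_J^{-1}$ (valid on $\Omega^n$): $\dd\gamma = \iota_J^{-1}\dd\iota_J\gamma = \dd^{\cc}\gamma$ in the notation $\dc = \iota_J^{-1}\dd\iota_J$ acting suitably. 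So $(\ker\mathcal{L}_J)^{n-1} = \ker(\dd - \dc\text{-type operator})$, and $(\ker\mathcal{L}_J)^{n-1} \cap (\ker\dd)^{n-1} = (\ker\dd)^{n-1} \cap (\ker\dc)^{n-1}$. I would then show $(\ker\dd)^{n-1} + (\ker\dc)^{n-1}$ — or the relevant quotient — is finite-codimensional by a spectral-sequence or elliptic argument paralleling the Bott–Chern case of Theorem~\ref{thm:fd-complex}, now using that in top degree $\dc$ is genuinely a first-order operator conjugate to $\dd$. This last comparison, making precise that "the obstruction space is the cokernel of an elliptic operator," is the crux; everything else is bookkeeping with~\eqref{eq:LJspecial} and standard Hodge theory.
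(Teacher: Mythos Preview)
Your $k=0$ case is fine. But the $k=1$ argument contains a genuine error. You claim that if $\alpha = \dd f$ lies in $(\ker\mathcal{L}_J)^1$ then $f \in (\ker\mathcal{L}_J)^0$, citing~\eqref{eq:kerLJ0}. That equation says only that $(\ker\mathcal{L}_J)^0 = (\ker\dd)^0$; it does \emph{not} say that $\mathcal{L}_J(\dd f)=0$ forces $\mathcal{L}_J f = 0$. What you actually get from $\mathcal{L}_J\dd f = 0$ is $\dd(\mathcal{L}_J f)=0$, i.e.\ the $1$-form $\iota_J\dd f$ is closed --- not that it vanishes. The implication you want is precisely the content of Proposition~\ref{prop:cptcplxinj}, whose proof uses integrability in an essential way (via $\mathcal{L}_J\dd = 2i\partial\bar\partial$ and the maximum principle). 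Your parenthetical claim to have reproved that proposition without integrability is therefore unfounded; as written, you have not shown injectivity of $H^1_J \to H^1_{\DR}$ in the non-integrable case. A repair along your lines is possible: the kernel of $H^1_J \to H^1_{\DR}$ is $\dd\bigl(\ker(\mathcal{L}_J\dd:\Omega^0\to\Omega^2)\bigr)$, and $\mathcal{L}_J\dd$ has injective principal symbol $\xi\mapsto \xi\wedge J^T\xi$, so its kernel is finite-dimensional on compact $M$. But that is an elliptic argument you did not make, and it yields only finite-dimensionality of the kernel, not injectivity.

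For $k=n$ your discussion circles the right question but never lands on an argument. The paper proceeds quite differently from anything you sketch: it introduces the first-order operator $D = \dd + i\mathcal{L}_J$ on complexified forms, observes that $D = 2\bar\partial$ plus zeroth-order terms so that $\Delta_D = DD^* + D^*D$ is elliptic (Lemma~\ref{lemma:fd1}), and separately uses a two-column double complex with horizontal map $\mathcal{L}_J$ and vertical map $\dd$ whose spectral sequence reduces finite-dimensionality of $H^k_J$ to finite-dimensionality of an explicit obstruction space $\im\delta^{k-1}$ (Lemma~\ref{lemma:fd2}). For $k=1$ one shows that representatives of $\im\delta^0$ are imaginary parts of functions in $\ker D$; for $k=n$ one uses~\eqref{eq:LJspecial} and the Hodge decomposition for $\Delta_D$ to push representatives into $\dd(\ker\Delta_D)$ modulo $\dd(\ker\mathcal{L}_J)^{n-1}$. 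The single organizing idea you are missing in both degrees is this elliptic operator $\Delta_D$; your attempts to extract an elliptic operator from $\dd$ and $\dc = \iota_J^{-1}\dd\iota_J$ in top degree do not converge because you never combine them into a Laplacian-type object with a Hodge decomposition.
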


We have already seen in~\eqref{eq:JcohomDR} that $H^0_J (M) \cong H_{\DR}^0 (M)$, so the key point of Theorem~\ref{thm:fd-general} concerns $H^1_J$ and $H^{2m}_J$. Before we can prove Theorem~\ref{thm:fd-general}, we need to establish two lemmas.

Recall from Section~\ref{sec:deriv} that $\dd = \mathcal{L}_I$, where $I$ is the identity endomorphism of $TM$. Consider the differential operator $D = \dd +i \mathcal{L}_J = \mathcal{L}_{I + i J}$ on the bundle of complexified differential forms $\Omega^{\bu} \otimes \C$. First we observe from $\mathcal{L}_J \dd = - \dd \mathcal{L}_J$ and~\eqref{eq:LJsquared} that $D^2 = (\dd + i \mathcal{L}_J)(\dd + \i \mathcal{L}_J) = - (\mathcal{L}_J)^2 = \mathcal{L}_N$ so $D^2 = 0$ if and only if $N = 0$ if and only if $J$ is integrable. If $J$ is integrable, then~\eqref{eq:LJ} implies that $D = 2\bar{\partial}$, and it is a standard fact that the $\bar \partial$-Laplacian $\Delta_{\bar{\partial}} = \bar \partial \bar \partial^* + \bar \partial^* \bar \partial$ is an elliptic operator.

In the general not necessarily integrable case, we claim that the symbol of the $D$-Laplacian $\Delta_D = D D^* + D^* D$ is again elliptic, and in fact equals the symbol of $4 \Delta_{\bar \partial}$. This can be computed directly. Alternatively, as mentioned in Remark~\ref{rmk:cohomdefns}(b) above, in~\cite{dKS} it is shown that $\mathcal{L}_J = -J^{-1} \dd J - \iota_{J \cdot N}$, and one can compute using the formulas established in~\cite[Section 3]{dKS} that
\begin{equation*}
D = \dd + i \mathcal{L}_J = 2 \bar \partial - \tfrac{1}{4} \iota_N - \tfrac{3}{4} i \iota_{J \cdot N}.
\end{equation*}
Thus $D = 2 \bar \partial$ up to lower order terms, which establishes the claim. This observation, together with the usual Hodge decomposition for elliptic operators on compact manifolds, establishes the following lemma.

\begin{lemma} \label{lemma:fd1}
The operator $\Delta_D = D D^* + D^* D$ is elliptic, so in particular, the space $\ker \Delta_D = (\ker D) \cap (\ker D^*)$ is finite-dimensional, and
\begin{equation*}
\Omega^k \otimes \C = (\ker \Delta_D)^k \oplus \big( (\im D)^k + (\im D^*)^k \big).
\end{equation*}
The sum $(\im D)^k + (\im D^*)^k$ is not direct in general because $D^2 \neq 0$.
\end{lemma}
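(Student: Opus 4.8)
The plan is to deduce Lemma~\ref{lemma:fd1} directly from the general theory of elliptic differential operators on compact manifolds, once we have established that $\Delta_D$ is elliptic. First I would pin down that $D = \dd + i \mathcal{L}_J$ is a first-order linear differential operator on the smooth complex vector bundle $\Lambda^{\bu} T^*M \otimes \C$ over the compact manifold $M$; this is clear since $\dd$ and $\mathcal{L}_J$ are both first-order (the latter because $\mathcal{L}_J = [\iota_J, \dd]$ with $\iota_J$ of order zero). Its formal adjoint $D^*$ with respect to the chosen compatible metric $g$ is then also first-order, and $\Delta_D = DD^* + D^*D$ is a second-order self-adjoint operator. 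The key computation, already sketched in the paragraph preceding the lemma, is that $D = 2\bar\partial + (\text{order zero terms})$, explicitly $D = 2\bar\partial - \tfrac14 \iota_N - \tfrac34 i\, \iota_{J\cdot N}$ using the formulas of~\cite{dKS}; since the principal symbol of a differential operator only sees the top-order part, $\sigma(D) = \sigma(2\bar\partial)$ and hence $\sigma(\Delta_D) = \sigma(4\Delta_{\bar\partial})$. Because $\Delta_{\bar\partial}$ is a well-known elliptic operator, its symbol is invertible off the zero section, and therefore so is that of $\Delta_D$; thus $\Delta_D$ is elliptic.

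Once ellipticity is in hand, the rest is a direct invocation of standard Hodge theory for elliptic self-adjoint operators on compact manifolds. I would cite the general result (e.g. Warner, or~\cite{Schweitzer} in the analogous Bott-Chern setting) that for a self-adjoint elliptic operator $\Delta_D$ acting on sections of a vector bundle over a compact manifold, one has $\ker \Delta_D$ finite-dimensional and an $L^2$-orthogonal decomposition $\Omega^k \otimes \C = (\ker \Delta_D)^k \oplus (\im \Delta_D)^k$. It then remains to unwind $(\im \Delta_D)^k = (\im D)^k + (\im D^*)^k$: the inclusion $\supseteq$ fails in general, so instead I would argue as follows. Since $\Delta_D$ is self-adjoint with closed range, $\Omega^k\otimes\C = \ker\Delta_D \oplus \im\Delta_D$, and on $\im\Delta_D$ we can write any element as $\Delta_D\eta = DD^*\eta + D^*D\eta \in \im D + \im D^*$; conversely $\im D + \im D^* \subseteq (\ker\Delta_D)^\perp$ because $\ker\Delta_D = \ker D \cap \ker D^*$ (this last equality holding precisely because $\langle \Delta_D\alpha,\alpha\rangle = \|D\alpha\|^2 + \|D^*\alpha\|^2$, so $\Delta_D\alpha = 0 \iff D\alpha = D^*\alpha = 0$). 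Combining, $(\ker\Delta_D)^\perp = \im D + \im D^*$, which gives the stated decomposition. The final sentence of the lemma, that the sum $(\im D)^k + (\im D^*)^k$ need not be direct, follows because directness would force $\im D \cap \im D^* = 0$; when $D^2 \neq 0$ one has $D(\im D) = \im D^2 \neq 0$, and a short argument (or simply pointing to the contrast with the $\bar\partial$ case, where $\bar\partial^2 = 0$ makes $\im\bar\partial \perp \im\bar\partial^*$) shows this can fail.

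The only genuine subtlety — and the step I would flag as the main point requiring care — is the symbol computation $D = 2\bar\partial + (\text{lower order})$. One must be confident that $\iota_N$ and $\iota_{J\cdot N}$ are genuinely of order zero (they are: algebraic derivations are $C^\infty(M)$-linear, by~\eqref{eq:alg-derivation}), and that the decomposition $\dd = \partial + \bar\partial$ together with $\mathcal{L}_J = i(\partial - \bar\partial)$ in the integrable case correctly identifies the top-order part of $\mathcal{L}_J$ in general as $i(\partial - \bar\partial)$ as well. I would either cite~\cite[Section 3]{dKS} for the explicit formula $\mathcal{L}_J = J^{-1}\dd J - \iota_{J\cdot N}$ and note $\sigma(J^{-1}\dd J) = \sigma(\dd^c) = \sigma(i(\partial-\bar\partial))$, or — since only the symbol is needed — remark that the principal symbol of $\mathcal{L}_K$ for any vector-valued $1$-form $K$ depends only on $K$ pointwise (via $\sigma_\xi(\mathcal{L}_K)(\beta) = (\text{algebraic expression in } K, \xi, \beta)$), so that $\sigma(D) = \sigma(\mathcal{L}_{I+iJ})$ coincides with the symbol of the corresponding operator in the integrable model, namely $2\bar\partial$. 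Everything else — finite-dimensionality of the kernel, the orthogonal decomposition, and the characterization $\ker\Delta_D = \ker D \cap \ker D^*$ — is then boilerplate elliptic theory on compact manifolds and requires no new calculation.
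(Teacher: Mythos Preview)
Your proposal is correct and follows exactly the route the paper takes: the paper's entire argument is the paragraph preceding the lemma, which establishes $D = 2\bar\partial + (\text{order-zero terms})$ via the formula from~\cite{dKS}, concludes $\sigma(\Delta_D) = \sigma(4\Delta_{\bar\partial})$ is elliptic, and then simply invokes ``the usual Hodge decomposition for elliptic operators on compact manifolds.'' You have spelled out that invocation in full (the identification $\ker\Delta_D = \ker D \cap \ker D^*$ via $\langle\Delta_D\alpha,\alpha\rangle = \|D\alpha\|^2 + \|D^*\alpha\|^2$, and the equality $(\ker\Delta_D)^\perp = \im D + \im D^*$), which the paper leaves implicit.
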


We will now employ the tool of spectral sequences. A good reference is~\cite{BT}. Consider the following double complex:
\begin{equation*} 
\newcommand{\ra}[1]{\xrightarrow{\quad#1\quad}}
\newcommand{\da}[1]{\left\downarrow{\scriptstyle#1}\vphantom{\displaystyle\int_0^1}\right.}
\begin{array}{llllllllllll}
              &                          &    \Omega^0     \\
              &                          &    \da{\dd}       \\       
  \Omega^0    &    \ra{\mathcal{L}_J}   &    \Omega^1     \\
  \da{\dd}      &                          &    \da{\dd}       \\       
  \Omega^1    &    \ra{\mathcal{L}_J}   &    \Omega^2     \\  
  \vdots      &    \ddots                &    \vdots       \\  
  \Omega^{k-2}&    \ra{\mathcal{L}_J}   &    \Omega^{k-1} \\
  \da{\dd}      &                          &    \da{\dd}       \\
  \Omega^{k-1}&    \ra{\mathcal{L}_J}   &    \Omega^k     \\
  \da{\dd}      &                          &    \da{\dd}       \\
  \Omega^k    &    \ra{\mathcal{L}_J}   &    \Omega^{k+1} \\
  \vdots      &    \ddots                &    \vdots       \\
  \Omega^{n-1}&    \ra{\mathcal{L}_J}   &    \Omega^n     \\
  \da{\dd}      &                          &                 \\
  \Omega^n    &                          &                 \\
\end{array}
\end{equation*}
If we calculate the cohomology in the downward direction, we obtain the de Rham cohomology groups, which are finite-dimensional. Hence, the total complex of this double complex has finite-dimensional cohomology. We can also calculate the cohomology to the right and then downwards. In the second page of the spectral sequence, we obtain:
\begin{equation*}
\begin{tikzcd}
 \vdots                  &  &    \vdots                      \\  
 H^{k-2}_J & &  H^{k-1}((\coker \mathcal{L}_J)^{\bu}) \arrow[lldd, "\delta^{k-1} = \dd \mathcal{L}_J^{-1} \dd"near end] \\
 H^{k-1}_J & &  H^k((\coker \mathcal{L}_J)^{\bu})     \\
 H^k_J  & &   H^{k+1}((\coker \mathcal{L}_J)^{\bu})  \\
  \vdots                &   &    \vdots                     
\end{tikzcd}
\end{equation*}
Moreover, the spectral sequence stabilizes after the second page. Consider the group $H^k_J$. Because the total complex has finite cohomology, $H^k_J$ will become finite-dimensional when we quotient it out by the image of $\delta^{k-1}$. We have thus established the following lemma.

\begin{lemma} \label{lemma:fd2}
The space $H^k_J$  is finite-dimensional if and only if
\begin{equation} \label{eq:fd}
\im \delta^{k-1} = \frac{\{ \dd v \in \Omega^k : \mathcal{L}_J v = \dd u \text{ for some } u \} + \dd (\ker  \mathcal{L}_J)^{k-1}}{\dd (\ker \mathcal{L}_J)^{k-1}}
\end{equation}
is finite-dimensional.
\end{lemma}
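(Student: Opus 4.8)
The plan is to extract the statement from the spectral sequence setup already laid out in the excerpt, so most of the work is bookkeeping rather than new mathematics. First I would recall that the double complex displayed above has columns indexed by $\{0,1\}$ with horizontal differential $\mathcal{L}_J$ and vertical differential $\dd$, and that computing cohomology ``downward first'' gives de Rham cohomology in each column (since each column is just the de Rham complex, possibly shifted), hence the total cohomology $H^{\bu}_{\mathrm{Tot}}$ is finite-dimensional. This is the fixed target that everything gets compared against.

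Next I would run the other spectral sequence, taking cohomology ``rightward first.'' The first page has entries $(\ker \mathcal{L}_J)^{\bu}$ in the left column and $(\coker \mathcal{L}_J)^{\bu}$ in the right column; taking $\dd$-cohomology then yields the second page with $E_2^{0,\bu} = H^{\bu}_J$ (the left column) and $E_2^{1,\bu} = H^{\bu}((\coker\mathcal{L}_J)^{\bu})$ (the right column). Because there are only two columns, the only possibly-nonzero higher differential is $\delta^{k-1} : E_2^{1,k-1} \to E_2^{0,k}$, i.e.\ $H^{k-1}((\coker\mathcal{L}_J)^{\bu}) \to H^k_J$, and the spectral sequence degenerates at $E_3$. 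I would identify $\delta^{k-1}$ concretely: represent a class in $H^{k-1}((\coker\mathcal{L}_J)^{\bu})$ by $v \in \Omega^{k-1}$ with $\mathcal{L}_J v = \dd u$ for some $u$; then the connecting map sends it to $[\dd v] \in H^k_J$. This is exactly the ``$\dd \mathcal{L}_J^{-1} \dd$'' label in the diagram, and it makes the numerator of~\eqref{eq:fd} precise.

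Then I would read off the $E_\infty$ page: $E_\infty^{0,k} = H^k_J / \im \delta^{k-1}$, and there is a finite filtration on $H^{k}_{\mathrm{Tot}}$ (using the appropriate total-degree conventions for the shifted columns) whose associated graded pieces are the various $E_\infty^{p,q}$, in particular $H^k_J/\im\delta^{k-1}$ appears as a subquotient of a finite-dimensional space and is therefore finite-dimensional. Consequently $H^k_J$ is finite-dimensional if and only if $\im\delta^{k-1}$ is finite-dimensional. Finally I would note that $\im\delta^{k-1}$ is, by the description of $\delta^{k-1}$ above, precisely the space displayed in~\eqref{eq:fd}: the image in $H^k_J = (\ker\dd)^k\cap(\ker\mathcal{L}_J)^k / \dd(\ker\mathcal{L}_J)^{k-1}$ of all $\dd v$ with $v$ satisfying $\mathcal{L}_J v \in \im\dd$, which is the quotient written there. (One should check that $\dd v$ indeed lies in $(\ker\mathcal{L}_J)^k$ so that it represents a genuine $J$-cohomology class: $\mathcal{L}_J\dd v = -\dd\mathcal{L}_J v = -\dd\dd u = 0$ by~\eqref{eq:commdL}, and $\dd\dd v = 0$ automatically.)

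The main obstacle, and the only place that needs genuine care rather than diagram-chasing, is getting the degree conventions of the two-column double complex exactly right — the left column starts at $\Omega^0$ while the right column is the de Rham complex shifted, so the total-degree grading and the indexing of $\delta^{k-1}$ must be aligned so that $E_2^{0,k}$ really is $H^k_J$ and $\delta^{k-1}$ really lands in it. Once the indexing is pinned down, the finiteness of total cohomology from the downward computation plus the two-column degeneration immediately yields the stated equivalence, with $\im\delta^{k-1}$ identified as in~\eqref{eq:fd}.
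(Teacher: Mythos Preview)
Your proposal is correct and follows essentially the same approach as the paper: the paper's argument is precisely the two-column spectral sequence computation you describe, concluding that $H^k_J/\im\delta^{k-1}$ is finite-dimensional because the total cohomology (computed downward first as de Rham cohomology) is finite-dimensional. Your write-up is in fact more explicit than the paper's, since you spell out the connecting map $\delta^{k-1}$ concretely and verify that $\dd v \in (\ker\mathcal{L}_J)^k$, details the paper leaves implicit.
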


We can now prove Theorem~\ref{thm:fd-general}. For brevity, we will suppress ${} \otimes \C$ throughout the proof.
\begin{proof}[Proof of Theorem~\ref{thm:fd-general}]
We need to show that~\eqref{eq:fd} is finite-dimensional for $k=1, 2m$. Consider first the case $k=1$. We showed in the proof of Lemma~\ref{lemma:JcohomDR} that $\dd (\ker \mathcal{L}_J)^0 = 0$, so by~\eqref{eq:fd} $\im \delta^0 = \{ \dd v \in \Omega^k : \mathcal{L}_J v = \dd u \text{ for some } u \}$ in this case. Let $\dd v \in \im \delta^0$ with $\mathcal{L}_J v = \dd u$. Define $f = u + i v$. Using~\eqref{eq:LJspecial} and $(\iota_J)^2 = -I$ on $1$-forms, we have
\begin{align*}
D f & = (\dd + i \mathcal{L}_J) (u + iv) = (\dd u - \iota_J \dd v) + i (\dd v + \iota_J \dd u) \\
& = (\dd u -\iota_J \dd v) + i \iota_J (\dd u - \iota_J \dd v) = 0.
\end{align*}
Since $\Delta_D = D^*D$ on $\Omega^0$, we find that $f$ lies in the finite-dimensional space $\ker \Delta_D$, and thus $v = \imag f$ lies in a finite-dimensional space.

Next we consider the case when $k=n=2m$. This time the space $\im \delta^{n-1}$ in~\eqref{eq:fd} does not simplify. But elements of the quotient space $\im \delta^{n-1}$ are represented by exact $n$-forms. Let $[\dd \tau] \in \im \delta ^{n-1}$, and define $\sigma = -\iota_J \tau$. Set $\mu = \sigma + i \tau$. Using~\eqref{eq:LJspecial} and $(\iota_J)^2 = -I$ on $(n-1)$-forms, we have
\begin{align*}
D \mu & = (\dd + i \mathcal{L}_J) (\sigma + i \tau) = (\dd \sigma + \dd \iota_J \tau) + i (\dd \tau - \dd \iota_J \sigma) \\
& = \dd (\sigma + \iota_J \tau) + i \dd \iota_J( - \iota_J \tau - \sigma)= 0,
\end{align*}
and hence $\mu \in \ker D$. From the Hodge decomposition of Lemma~\ref{lemma:fd1}, we can write $\mu = \alpha + D \beta$, where $\alpha \in \ker \Delta_D$ and $D^2 \beta = 0$. Now we observe that $0 = D^2 \beta = -\mathcal{L}_J^2 \beta$, so $\mathcal{L}_J \beta \in \ker \mathcal{L}_J$. Moreover, we have $\dd D \beta = i \dd \mathcal{L}_J \beta$, so in fact $\dd D\beta \in \dd(\ker \mathcal{L}_J)^{n-1}$. Therefore, $\dd \mu = \dd \alpha$ modulo $\dd(\ker \mathcal{L}_J)^{n-1}$, and hence $[\dd \tau]$ lies in the finite-dimensional vector space that is the imaginary part of $\dd ( \ker \Delta_D)$.
\end{proof}

\subsection{The relation between the $J$-cohomology and the $\partial \bar{\partial}$ lemma} \label{sec:del-delbar}

In this section we relate the $J$-cohomology to the $\partial \bar{\partial}$-lemma~\cite{Angella, Huybrechts} in the integrable case, and to a generalization of the $\partial \bar{\partial}$-lemma which we call the $\dd \mathcal{L}_J$-lemma, in the general case. Along the way we will discuss the connections to previously known results. We begin with the statement of the classical $\partial \bar{\partial}$-lemma.

\begin{defn}[$\partial \bar{\partial}$-lemma] \label{defn:ddbar}
Let $(M,J)$ be a complex manifold. If for every $\alpha \in \Omega^k (M) \otimes \C$, we have
\begin{equation*}
(\text{$\alpha$ is $\partial$-closed and $\bar\partial$-closed and $\dd$-exact}) \Longleftrightarrow (\text{$\alpha$ is $\partial \bar \partial$-exact})
\end{equation*}
then we say that $(M, J)$ satisfies the $\partial \bar \partial$-lemma in degree $k$.
\end{defn}

Recall that the inclusion $(\ker \mathcal{L}_J)^{\bu} \hookrightarrow \Omega^{\bu}$ induces a well-defined natural map $H_J^{\bu} \to H_{\DR}^{\bu}$.

\begin{thm} \label{thm:main}
Let $M$ be a \emph{complex} manifold. Then $M$ satisfies the $\partial \bar{\partial}$-lemma \textcolor{red}{for all degrees $k$} if and only if the natural map $H_J^k (M) \to H_{\DR}^k (M)$ is an isomorphism \textcolor{red}{for all degrees $k$}.
\end{thm}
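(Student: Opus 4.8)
The plan is to analyze the natural map $H^k_J \to H^k_{\DR}$ directly in terms of representatives, using the description $\mathcal{L}_J = i(\partial - \bar\partial)$ from~\eqref{eq:LJ} and the identity $\mathcal{L}_J \dd = 2i \partial\bar\partial$ from~\eqref{eq:LJd}, together with the characterization $(\ker \mathcal{L}_J) \cap (\ker \dd) = (\ker \partial)\cap(\ker\bar\partial)$ from~\eqref{eq:HJ1integrable}. By definition, $H^k_J$ is the cohomology of $(\ker \mathcal{L}_J)^{\bu}$ with differential $\dd$, so a class in $H^k_J$ is represented by $\alpha$ with $\dd\alpha = 0$ and $\mathcal{L}_J\alpha = 0$ (equivalently $\partial\alpha = \bar\partial\alpha = 0$), modulo $\dd\beta$ with $\mathcal{L}_J\beta = 0$. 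Since complexification does not affect injectivity/surjectivity of the map, I will work with $\C$-coefficients throughout and suppress $\otimes\C$.

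First I would prove surjectivity of $H^k_J \to H^k_{\DR}$ \emph{assuming} the $\partial\bar\partial$-lemma in degree $k$. Given a $\dd$-closed $k$-form $\eta$, I want to modify it by a $\dd$-exact form to land in $\ker\mathcal{L}_J$. Decompose $\eta = \sum_{p+q=k}\eta^{p,q}$ into bidegrees. The condition $\dd\eta = 0$ does \emph{not} immediately give $\partial\eta^{p,q} = \bar\partial\eta^{p,q} = 0$ componentwise, so I expect to need the $\partial\bar\partial$-lemma (or rather its standard consequences, e.g. that on a $\partial\bar\partial$-manifold every de Rham class has a representative that is a sum of $\partial$- and $\bar\partial$-closed pieces). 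The cleanest route: use the well-known equivalence that the $\partial\bar\partial$-lemma in all degrees is equivalent to the degeneration of the Frölicher spectral sequence plus a Hodge-decomposition-type statement; but since the theorem is stated degree by degree, I should instead argue by hand. Given $\dd\eta=0$, I'd like to write $\eta = \eta_0 + \dd\gamma$ with $\partial\eta_0 = \bar\partial\eta_0 = 0$. Granting this, $\eta_0 \in (\ker\mathcal{L}_J)^k \cap (\ker\dd)^k$ and $[\eta_0] \mapsto [\eta_0] = [\eta]$ in $H^k_{\DR}$, giving surjectivity. For injectivity assuming the lemma: suppose $\alpha \in (\ker\mathcal{L}_J)^k\cap(\ker\dd)^k$ maps to $0$ in $H^k_{\DR}$, i.e. $\alpha = \dd\gamma$. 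Then $\alpha$ is $\partial$-closed, $\bar\partial$-closed, and $\dd$-exact, so by the $\partial\bar\partial$-lemma $\alpha = \partial\bar\partial\xi = \tfrac{1}{2i}\mathcal{L}_J\dd\xi = \dd\bigl(-\tfrac{1}{2i}\mathcal{L}_J\xi\bigr)$ using $\mathcal{L}_J\dd = -\dd\mathcal{L}_J$. The crucial point is that $-\tfrac{1}{2i}\mathcal{L}_J\xi$ lies in $\ker\mathcal{L}_J$: indeed $(\mathcal{L}_J)^2 = 0$ in the integrable case by~\eqref{eq:LJsquared}. Hence $\alpha$ is a boundary in the complex $((\ker\mathcal{L}_J)^{\bu},\dd)$, so $[\alpha] = 0$ in $H^k_J$.

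Conversely, assume the natural map $H^k_J \to H^k_{\DR}$ is an isomorphism; I must deduce the $\partial\bar\partial$-lemma in degree $k$. One implication is free: if $\alpha = \partial\bar\partial\xi$ then $\alpha = \tfrac{1}{2i}\mathcal{L}_J\dd\xi = -\tfrac{1}{2i}\dd\mathcal{L}_J\xi$ is $\dd$-exact, and $\partial\alpha = \bar\partial\alpha = 0$ is immediate from $\partial^2 = \bar\partial^2 = 0$ and $\partial\bar\partial = -\bar\partial\partial$. For the reverse implication, take $\alpha$ that is $\partial$-closed, $\bar\partial$-closed, and $\dd$-exact. Then $\alpha \in (\ker\mathcal{L}_J)^k\cap(\ker\dd)^k$ by~\eqref{eq:HJ1integrable}, and $\alpha$ maps to $0$ in $H^k_{\DR}$; since the map is injective, $[\alpha] = 0$ in $H^k_J$, i.e. $\alpha = \dd\beta$ for some $\beta \in (\ker\mathcal{L}_J)^{k-1}$. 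Now $\beta$ is $\partial$-closed and $\bar\partial$-closed (again by~\eqref{eq:HJ1integrable}, noting $\mathcal{L}_J\beta=0$ but we do \emph{not} know $\dd\beta = 0$ — wait, $\beta\in\ker\mathcal{L}_J$ only gives $\partial-\bar\partial$ annihilates $\beta$, i.e. $\partial\beta = \bar\partial\beta$), so I need to be careful: $\mathcal{L}_J\beta = 0$ means $i(\partial-\bar\partial)\beta = 0$, hence $\partial\beta = \bar\partial\beta$. Then $\alpha = \dd\beta = \partial\beta + \bar\partial\beta = 2\partial\beta = 2\bar\partial\beta$. I want $\alpha = \partial\bar\partial(\text{something})$. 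Here I expect to need surjectivity of the map as well, or a further argument: the standard trick is that since $\partial\beta = \bar\partial\beta$ and $\partial(\partial\beta) = 0$, $\bar\partial(\bar\partial\beta)=0$, one shows $\beta$ can be adjusted. Actually the clean statement: $\alpha = 2\partial\beta$ with $\bar\partial\beta = \partial\beta$ so $\bar\partial\alpha = 2\bar\partial\partial\beta = -2\partial\bar\partial\beta = -\partial\alpha$, but $\partial\alpha = \bar\partial\alpha = 0$ anyway — this is consistent but not yet conclusive. The resolution is to use the full isomorphism (both directions across all relevant degrees, or a clever use of surjectivity in degree $k$) to write $\beta = \beta' + \dd\zeta$ where $\beta'$ additionally satisfies enough to conclude $\alpha = 2\partial\beta = 2\partial\beta' = $ something of the form $\partial\bar\partial(\cdot)$; concretely, because $\partial\beta = \bar\partial\beta$, set $\xi$ to be a primitive and chase bidegrees. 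I anticipate this last step — extracting the $\partial\bar\partial$-exactness of $\alpha$ from mere $\dd$-exactness within the $\ker\mathcal{L}_J$ complex — to be the \textbf{main obstacle}, and I would handle it by the bidegree-componentwise analysis of $\dd\beta$ together with $\partial\beta = \bar\partial\beta$, possibly invoking surjectivity of $H^{k-1}_J \to H^{k-1}_{\DR}$ or a direct primitive construction, mirroring the classical proof that the three standard formulations of the $\partial\bar\partial$-lemma are equivalent.

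Throughout, the routine ingredients — that $\mathcal{L}_J$ and $\dd$ are derivations, that $(\mathcal{L}_J)^2 = 0$ when $N=0$, and the translation dictionary $\mathcal{L}_J = i(\partial-\bar\partial)$, $\dd\mathcal{L}_J = -2i\partial\bar\partial$ — I would cite from~\eqref{eq:LJ}, \eqref{eq:LJd}, \eqref{eq:LJsquared}, \eqref{eq:HJ1integrable} rather than reprove. The overall structure is thus: (1) reduce to $\C$-coefficients; (2) unwind both cohomologies into bidegree language; (3) prove the "easy" halves of each implication (surjectivity given the lemma; one direction of the lemma given the isomorphism) which are essentially formal; (4) concentrate effort on the two "hard" halves, which both amount to converting $\dd$-exactness (with side conditions) into $\partial\bar\partial$-exactness — the standard content of the $\partial\bar\partial$-lemma — and here I would adapt the classical argument found in~\cite{Huybrechts, Angella}.
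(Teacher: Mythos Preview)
Your approach differs substantially from the paper's. The paper does not chase representatives at all; instead it deduces Theorem~\ref{thm:main} from the more general Theorem~\ref{thm:main2}, whose proof is a one-line long exact sequence argument. Concretely, the paper applies the short exact sequence of complexes
\[
0 \longrightarrow (\ker\mathcal{L}_J)^{\bu} \longrightarrow \Omega^{\bu} \overset{\mathcal{L}_J}{\longrightarrow} (\im\mathcal{L}_J)^{\bu} \longrightarrow 0,
\]
and observes that the connecting cohomology $H^{k-1}\big((\im\mathcal{L}_J)^{\bu}\big)$ equals $\dfrac{(\ker\dd)^k\cap(\im\mathcal{L}_J)^k}{(\im\dd\mathcal{L}_J)^k}$, whose vanishing is precisely the $\dd\mathcal{L}_J$-lemma in degree $k$. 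Exactness then gives: the lemma in degree $k$ holds iff $\phi^k$ is injective \emph{and} $\phi^{k-1}$ is surjective. In the integrable case the paper passes between the $\dd\mathcal{L}_J$-lemma and the $\partial\bar\partial$-lemma via the involution $P=(-1)^p\pi^{p,q}$ (Lemma~\ref{lemma:ddcprelim}), which intertwines $\dd$ and $\mathcal{L}_J$ up to a scalar.

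Your direct attack reproduces pieces of this but leaves genuine gaps. Your injectivity argument (lemma $\Rightarrow$ $\phi^k$ injective) is correct and is essentially the image-of-$\mathcal{L}_J$ trick. But your surjectivity step is only asserted (``I'd like to write $\eta=\eta_0+\dd\gamma$ \ldots\ Granting this''): producing a $\partial$- and $\bar\partial$-closed representative of a given de Rham class requires the lemma in degree $k+1$, not $k$, which is exactly the degree shift the long exact sequence reveals. Likewise, in the converse direction you correctly reach $\alpha=2\partial\beta=2\bar\partial\beta$ with $\partial\beta=\bar\partial\beta$, and correctly flag that extracting $\partial\bar\partial$-exactness from this is the main obstacle; your suggested fix of invoking surjectivity of $\phi^{k-1}$ is precisely right, but you never carry it out. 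The paper's long exact sequence packages both of these degree-shift subtleties automatically, and the $P$-involution replaces your unfinished bidegree chase. If you want to complete your route, the missing step is: from $\alpha=\dd\beta$ with $\mathcal{L}_J\beta=0$, note $\beta$ is $\dd$-closed mod $\im\mathcal{L}_J$ only after using surjectivity of $\phi^{k-1}$ to adjust $\beta$ by some $\dd\zeta$ so that additionally $\dd\beta'=0$ fails but $\beta'=\mathcal{L}_J\xi+\text{(closed)}$, whence $\alpha=\dd\mathcal{L}_J\xi=-2i\partial\bar\partial\xi$; this is doable but is exactly what the exact sequence encodes more cleanly.
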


Theorem~\ref{thm:main} is not new. A version (adapted to a different setting) can be found, for example, in~\cite[Theorem 4.2]{Cavalcanti-ddc}. For our purposes, Theorem~\ref{thm:main} is just a special case of the more general Theorem~\ref{thm:main2} that we prove later in this section.

\begin{rmk} \label{rmk:BCA}
Another well-known characterization of the $\partial \bar \partial$-lemma is in terms of the Bott-Chern and Aeppli cohomologies. These are cohomologies on complex manifolds defined using the second order differential operator $\partial \bar \partial$. These cohomologies are isomorphic to the de Rham cohomology if and only if the $\partial \bar \partial$-lemma holds. See~\cite[Theorem 1.14]{Angella} for a thorough discussion. As far as the authors are aware, these characterizations require the assumption of compactness. By contrast, Theorem~\ref{thm:main} does not require $M$ to be compact.
\end{rmk}

Since any \emph{compact K\"ahler} manifold satisfies the $\partial \bar{\partial}$-lemma in all degrees~\cite{Huybrechts}, we obtain the following corollary of Theorem~\ref{thm:main}, which is the promised generalization of Proposition~\ref{prop:isomKahler1}.

\begin{cor} \label{cor:mainthmKahler}
If $M$ is a \emph{compact K\"ahler} manifold, then the natural map $H_J^{k} (M) \to H_{\DR}^{k} (M)$ is an isomorphism for all $k$.
\end{cor}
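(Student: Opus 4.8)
The plan is to deduce Corollary~\ref{cor:mainthmKahler} directly from Theorem~\ref{thm:main} together with the classical fact that compact K\"ahler manifolds satisfy the $\partial\bar\partial$-lemma. First I would recall the statement of the $\partial\bar\partial$-lemma (Definition~\ref{defn:ddbar}) and invoke the standard result---see~\cite{Huybrechts}---that every compact K\"ahler manifold satisfies it in every degree $k$. This is the ``deep input'' referred to in Remark~\ref{rmk:Riemannsurface}; its proof relies on Hodge theory and the K\"ahler identities, in particular the fact that $\Delta_{\dd}$, $\Delta_{\partial}$, and $\Delta_{\bar\partial}$ are all proportional on a compact K\"ahler manifold, so that the various harmonic spaces coincide and any $\dd$-exact form that is $\partial$- and $\bar\partial$-closed is $\partial\bar\partial$-exact.

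Once the $\partial\bar\partial$-lemma is in hand for all degrees, the corollary is immediate: Theorem~\ref{thm:main} asserts that for a complex manifold $M$, the natural map $H^k_J(M) \to H^k_{\DR}(M)$ induced by the inclusion $(\ker\mathcal{L}_J)^{\bu}\hookrightarrow\Omega^{\bu}$ is an isomorphism in degree $k$ if and only if $M$ satisfies the $\partial\bar\partial$-lemma in degree $k$. Since a compact K\"ahler manifold is in particular complex (integrable $J$) and satisfies the $\partial\bar\partial$-lemma in every degree, applying Theorem~\ref{thm:main} degree by degree gives that $H^k_J(M)\to H^k_{\DR}(M)$ is an isomorphism for all $k$.

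There is essentially no obstacle here, since the corollary is a formal consequence of a theorem stated earlier and a citation. The one point worth flagging is that Theorem~\ref{thm:main} is stated for complex manifolds without a compactness hypothesis, whereas the K\"ahler input we are using does require compactness; so I would make sure to state clearly that it is the compact K\"ahler hypothesis that feeds the $\partial\bar\partial$-lemma, and then Theorem~\ref{thm:main} closes the argument with no further assumptions needed. If desired, one could also remark that this recovers and strengthens Proposition~\ref{prop:isomKahler1}, which established the cases $k=0,1,2m$ directly via Hodge theory without appealing to the full $\partial\bar\partial$-lemma.

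\begin{proof}
By a classical theorem (see for example~\cite{Huybrechts}), every compact K\"ahler manifold satisfies the $\partial\bar\partial$-lemma in all degrees $k$. Since $M$ is in particular a complex manifold, Theorem~\ref{thm:main} applies in each degree $k$ and shows that the natural map $H^k_J(M)\to H^k_{\DR}(M)$ is an isomorphism for all $k$.
\end{proof}
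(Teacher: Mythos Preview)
Your proposal is correct and matches the paper's own argument essentially verbatim: the paper simply notes that any compact K\"ahler manifold satisfies the $\partial\bar\partial$-lemma in all degrees (citing~\cite{Huybrechts}) and then invokes Theorem~\ref{thm:main}. Your additional remarks about compactness and the relation to Proposition~\ref{prop:isomKahler1} are also consistent with the paper's presentation.
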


Before we can state and prove the more general version of Theorem~\ref{thm:main}, namely Theorem~\ref{thm:main2}, we need to reformulate and then generalize the $\partial \bar\partial$-lemma to the not necessarily integrable case. Recall from~\eqref{eq:LJ} and~\eqref{eq:LJd} that in the integrable case, we have $2 i \partial \bar \partial = \mathcal{L}_J \dd = - \dc \dd = \dd  \dc$, where we have used from~\eqref{eq:commdL} that $[ \dd, \mathcal{L}_J ] = \dd \mathcal{L}_J + \mathcal{L}_J \dd = 0$. Moreover, is is clear that $(\ker \dd) \cap (\ker \dc) = (\ker \partial) \cap (\ker \bar \partial)$. It follows that the $\partial \bar \partial$-lemma is equivalent to the following $\dd \dc$-lemma~\cite{Angella}.

\begin{defn}[$\dd \dc$-lemma] \label{defn:ddc}
Let $(M,J)$ be a complex manifold. If for every $\alpha \in \Omega^k (M) \otimes \C$, we have
\begin{equation*}
(\text{$\alpha$ is $\dc$-closed and $\dd$-exact}) \Longleftrightarrow (\text{$\alpha$ is $\dd \dc$-exact})
\end{equation*}
then we say that $(M, J)$ satisfies the $\dd \dc$-lemma in degree $k$.

Equivalently, $(M, J)$ is said to satisfy the $\dd \dc$-lemma in degree $k$ if and only if
\begin{equation} \label{eq:ddc}
\frac{(\im \dd)^k \cap (\ker \mathcal{L}_J)^k}{(\im \dd \mathcal{L}_J)^k} = 0
\end{equation}
since $\dc = - \mathcal{L}_J$ in the integrable case.
\end{defn}

\begin{lemma} \label{lemma:ddcprelim}
Let $M$ be a complex manifold. After complexification, we have an isomorphism
\begin{equation} \label{eq:Pisom}
\frac{(\im \dd)^k \cap (\ker \mathcal{L}_J)^k}{(\im \dd \mathcal{L}_J)^k} \cong \frac{(\im \mathcal{L}_J)^k \cap (\ker \dd)^k}{(\im \dd \mathcal{L}_J)^k}.
\end{equation}
\end{lemma}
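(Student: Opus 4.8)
The plan is to construct an explicit isomorphism between the two quotients by exploiting the fact that, in the integrable case, $\mathcal{L}_J$ and $\dd$ are (up to scalars) the operators $i(\partial - \bar\partial)$ and $\partial + \bar\partial$, which span the same two-dimensional space of operators as $\partial$ and $\bar\partial$ — and both $\dd\mathcal{L}_J$ and $\mathcal{L}_J\dd$ equal $\pm 2i\partial\bar\partial$. Concretely, recall from~\eqref{eq:commdL} that $\dd\mathcal{L}_J = -\mathcal{L}_J\dd$, so $(\im \dd\mathcal{L}_J)^k = (\im \mathcal{L}_J\dd)^k$ as subspaces of $\Omega^k\otimes\C$; this common subspace sits inside both numerators. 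I would therefore try to show that the identity map on $\Omega^k \otimes \C$, or a simple modification of it, carries $(\im \dd)^k \cap (\ker\mathcal{L}_J)^k$ onto $(\im\mathcal{L}_J)^k \cap (\ker\dd)^k$ modulo $(\im\dd\mathcal{L}_J)^k$.

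First I would unwind both sides using~\eqref{eq:LJ}: an element of $(\im\dd)^k \cap (\ker\mathcal{L}_J)^k$ is a form $\alpha = \dd\beta$ with $(\partial - \bar\partial)\alpha = 0$, and since $\alpha$ is also $\dd$-closed we get $\partial\alpha = \bar\partial\alpha = 0$; thus $\partial\partial\beta + \partial\bar\partial\beta = 0$ and $\bar\partial\partial\beta + \bar\partial\bar\partial\beta = 0$, i.e. $\partial\bar\partial\beta = -\partial\partial\beta$ and $\bar\partial\partial\beta = -\bar\partial\bar\partial\beta$. The numerator on the right is the set of $\mathcal{L}_J$-exact, $\dd$-closed forms. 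Given $\alpha = \dd\beta \in (\im\dd)^k \cap (\ker\mathcal{L}_J)^k$, I would write $\beta = \beta^{k-1}$ and decompose $\dd\beta = \partial\beta + \bar\partial\beta$; the natural candidate for the corresponding $\mathcal{L}_J$-exact form is obtained by replacing $\dd\beta = (\partial + \bar\partial)\beta$ with $\mathcal{L}_J\beta = i(\partial - \bar\partial)\beta$ (up to a harmless constant), and one checks $\dd\beta - \tfrac{1}{i}\mathcal{L}_J\beta = (\partial+\bar\partial)\beta - (\partial - \bar\partial)\beta = 2\bar\partial\beta$, which need not obviously lie in $(\im\dd\mathcal{L}_J)^k$ — so I should instead split $\beta$ into bidegree components and handle the map type-by-type. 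The cleanest route: decompose $\alpha \in (\im\dd)^k \cap (\ker\mathcal{L}_J)^k$ into pure-type pieces $\alpha^{p,q}$; closedness under $\dd$ and $\mathcal{L}_J$ forces $\partial\alpha^{p,q} = 0$ and $\bar\partial\alpha^{p,q} = 0$ for each $(p,q)$; and the condition $\alpha = \dd\beta$ together with bidegree bookkeeping should let me produce, for each component, an explicit $\eta$ with $\alpha^{p,q} \equiv \mathcal{L}_J\eta$ modulo $\partial\bar\partial$-exact forms, and conversely.

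The key technical point I expect to be the main obstacle is proving that the map is well-defined and bijective *on the quotients* — that is, verifying that the chosen correspondence sends $(\im\dd\mathcal{L}_J)^k$ into $(\im\dd\mathcal{L}_J)^k$ (so it descends), and that its inverse does too, rather than merely matching up the numerators abstractly. Since $\dd\mathcal{L}_J\gamma = -2i\partial\bar\partial\gamma$ by~\eqref{eq:LJd}, the denominators on both sides are literally the same subspace $(\im\partial\bar\partial)^k$ of $\Omega^k\otimes\C$; so once I exhibit a linear map $T$ on $\Omega^k \otimes \C$ preserving this subspace and inducing bijections $(\im\dd)^k \cap (\ker\mathcal{L}_J)^k \leftrightarrow (\im\mathcal{L}_J)^k \cap (\ker\dd)^k$ modulo it, I am done. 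A symmetric, clean choice of $T$: on a form $\alpha$ with pure-type decomposition $\sum \alpha^{p,q}$, set $T\alpha = \sum i^{p-q}\alpha^{p,q}$ (or some similar power of $i$), which conjugates $\partial + \bar\partial$ to $i(\partial - \bar\partial)$ up to type-dependent scalars and manifestly preserves $(\im\partial\bar\partial)^k$. I would then verify directly that $T$ maps $\dd$-exact to $\mathcal{L}_J$-exact (modulo $\partial\bar\partial$-exact), maps $\ker\mathcal{L}_J$ to $\ker\dd$, and has an inverse of the same form, which establishes the isomorphism~\eqref{eq:Pisom}.
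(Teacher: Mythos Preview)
Your proposal is correct and lands on essentially the same argument as the paper. After the initial exploration, the map you settle on, $T\alpha = \sum_{p,q} i^{\,p-q}\alpha^{p,q}$, is exactly the paper's map $P\alpha = \sum_{p,q}(-1)^p\alpha^{p,q}$ up to a degree-dependent scalar (on $k$-forms $i^{\,p-q} = i^{-k}(-1)^p$), and the checks you list --- that $T$ intertwines $\dd$ with $\mathcal{L}_J$ (one finds $T\dd = \mathcal{L}_J T$ and $\dd T = -T\mathcal{L}_J$), hence swaps the two numerators, preserves $(\im\dd\mathcal{L}_J)^k$, and is invertible --- are precisely what the paper verifies.
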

\begin{proof}
Define a linear map $P : \Omega^{\bu} \otimes \C \to \Omega^{\bu} \otimes \C$ by $P = (-1)^p \pi^{p,q}$, where $\pi^{p,q}$ is the projection of $\Omega^{\bu} \otimes \C$ onto $\Omega^{p,q}$. Let $\alpha \in \Omega^{p,q}$. Then using~\eqref{eq:LJ} we compute
\begin{align*}
- i P \dd \alpha & = - i P (\partial \alpha + \bar \partial \alpha) = - i \big( (-1)^{p+1} \partial \alpha + (-1)^p \bar \partial \alpha \big) \\
& = (-1)^p ( i \partial \alpha - i \bar \partial \alpha ) = \mathcal{L}_J P \alpha.
\end{align*}
By linearity, we deduce that $\mathcal{L}_J P = - i P \dd$ on all forms. We also have $P^2 = I$. Thus $P = P^{-1}$ is an isomorphism that 
interchanges $(\im \dd)^k$ with $(\im \mathcal{L}_J)^k$ and interchanges $(\ker \mathcal{L}_J)^k$ with $(\ker \dd)^k$. The result follows.
\end{proof}

\begin{defn}[$\dd \mathcal{L}_J$-lemma] \label{defn:dLJ}
Let $(M, J)$ be an \emph{almost complex} manifold. We say that $(M, J)$ satisfies the $\dd \mathcal{L}_J$-lemma in degree $k$ if
\begin{equation} \label{eq:dLJ}
\frac{(\im \mathcal{L}_J)^k \cap (\ker \dd)^k}{(\im \dd \mathcal{L}_J)^k} = 0.
\end{equation}

By Lemma~\ref{lemma:ddcprelim} and equation~\eqref{eq:ddc}, in the case when $J$ is \emph{integrable}, the $\dd \mathcal{L}_J$-lemma is equivalent to the $\dd \dc$-lemma.
\end{defn}

We now state and prove a more general version of Theorem~\ref{thm:main}.

\begin{thm} \label{thm:main2}
Let $(M,J)$ be an almost complex manifold, and denote by $\phi^{k} : H_J^{k} \to H_{\DR}^{k}$ the natural induced map. Then $M$ satisfies the $\dd \mathcal{L}_J$-lemma in degree $k$ if and only if $\phi^k$ is injective and $\phi^{k-1}$ is surjective.
\end{thm}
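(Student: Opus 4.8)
The plan is to analyze the long exact sequence in cohomology coming from the short exact sequence of complexes
\[
0 \to (\ker \mathcal{L}_J)^{\bu} \hookrightarrow \Omega^{\bu} \otimes \C \to Q^{\bu} \to 0,
\]
where $Q^{\bu} = (\Omega^{\bu}\otimes\C)/(\ker \mathcal{L}_J)^{\bu} \cong (\im \mathcal{L}_J)^{\bu}$ with differential induced by $\dd$. The natural map $\phi^k$ is precisely the map $H^k_J \to H^k_{\DR}$ appearing in this sequence, so $\phi^k$ injective and $\phi^{k-1}$ surjective is equivalent, by exactness, to the connecting map $\delta^{k-1} : H^{k-1}(Q^{\bu}) \to H^k_J$ being zero. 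The strategy is therefore to identify $H^{\bu}(Q^{\bu})$ and the connecting homomorphism $\delta$ explicitly in terms of $\dd$ and $\mathcal{L}_J$, and to show $\delta^{k-1} = 0$ is equivalent to the vanishing~\eqref{eq:dLJ}.

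First I would unwind the connecting map. An element of $H^{k-1}(Q^{\bu})$ is represented by $[\mathcal{L}_J \eta]$ for some $\eta$ with $\dd \mathcal{L}_J \eta \in (\ker \mathcal{L}_J)^k$; since $\mathcal{L}_J$ and $\dd$ anticommute, $\mathcal{L}_J(\dd \mathcal{L}_J \eta) = -\dd(\mathcal{L}_J)^2\eta = \dd \mathcal{L}_N \eta$, which vanishes automatically only in the integrable case — so in general the cocycle condition in $Q^{\bu}$ needs care, but the point is that $\delta[\mathcal{L}_J\eta] = [\dd \mathcal{L}_J \eta] \in H^k_J$. Hence $\im \delta^{k-1}$ consists exactly of classes in $H^k_J$ represented by elements of $(\im \dd \mathcal{L}_J)^k$. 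Then I would argue: $\delta^{k-1} = 0$ means every such $\dd\mathcal{L}_J\eta$ is $\dd$-exact within $(\ker\mathcal{L}_J)^k$, i.e.\ lies in $\dd(\ker\mathcal{L}_J)^{k-1}$; conversely $\delta^{k}=0$ governs injectivity in the next degree. Matching this against the quotient in~\eqref{eq:dLJ} requires identifying $(\im \mathcal{L}_J)^k \cap (\ker \dd)^k$ with the relevant subspace. Here I would use a variant of the operator $P$ from Lemma~\ref{lemma:ddcprelim}, or work directly: given $\beta = \mathcal{L}_J \xi$ with $\dd\beta = 0$, one wants to produce $\gamma$ with $\beta = \dd\mathcal{L}_J\gamma$, and the obstruction to doing so is precisely a class detected by $\delta$.

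Concretely, the cleanest route is probably to prove the two implications separately. For ($\dd\mathcal{L}_J$-lemma $\Rightarrow$ $\phi^k$ injective): take $[\alpha]\in H^k_J$ with $\phi^k[\alpha]=0$, so $\alpha = \dd\beta$ with $\alpha\in(\ker\mathcal{L}_J)^k$; then $\mathcal{L}_J\beta$ is $\dd$-closed (as $\dd\mathcal{L}_J\beta = -\mathcal{L}_J\dd\beta = -\mathcal{L}_J\alpha = 0$) and lies in $(\im\mathcal{L}_J)^k$... wait, degree: $\mathcal{L}_J\beta \in \Omega^{k}$ if $\beta\in\Omega^{k-1}$, good. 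Actually we want $\alpha$ itself; the right move is to apply the lemma in degree $k$ to a suitable form and then trace $\alpha$ back to $\dd\mathcal{L}_J(\text{something})$, which shows $[\alpha]=0$ in $H^k_J$ since $(\mathcal{L}_J)^2\neq 0$ forces us to check the intermediate form lies in $\ker\mathcal{L}_J$ — and that is where $D = \dd + i\mathcal{L}_J$ and the identities from the proof of Theorem~\ref{thm:fd-complex} come in handy. For surjectivity of $\phi^{k-1}$: given a $\dd$-closed $(k-1)$-form $\rho$, one must correct it by an exact form into $\ker\mathcal{L}_J$; the obstruction $\mathcal{L}_J\rho$ is $\dd$-closed and in $\im\mathcal{L}_J\cap\ker\dd$ in degree $k-1$... again a degree bookkeeping point, so more likely the argument pairs degree $k$ surjectivity-type data with degree $k$ injectivity-type data, exactly as the statement's indexing ($\phi^k$ injective, $\phi^{k-1}$ surjective) suggests.

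I expect the main obstacle to be handling the non-integrable case carefully, since $(\mathcal{L}_J)^2 = -\mathcal{L}_N \neq 0$ means $(\im \mathcal{L}_J)^{\bu}$ is not a subcomplex under $\mathcal{L}_J$ and $Q^{\bu}$'s differential is only the induced $\dd$ — one must be vigilant that representatives chosen actually satisfy $\mathcal{L}_J(\cdot) = 0$ where required, rather than merely being $\dd$-closed. The identities $\dd\mathcal{L}_J = -\mathcal{L}_J\dd$ (from~\eqref{eq:commdL}) and $D^2 = \mathcal{L}_N$, together with the trick $\partial\bar\partial\gamma = \dd(\bar\partial\gamma - \tfrac12\dd\gamma)$ generalized appropriately, should be the technical heart. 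Once the connecting map is correctly identified with "being $\dd\mathcal{L}_J$-exact inside $H^k_J$," the equivalence with~\eqref{eq:dLJ} via (the analogue of) Lemma~\ref{lemma:ddcprelim}'s operator $P$ — or a direct argument avoiding it, since $P$ used the bigrading which is unavailable when $J$ is non-integrable — is the last step, and finding the bigrading-free substitute for $P$ is the second place I would expect to spend effort.
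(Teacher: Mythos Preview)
Your starting point is exactly right and matches the paper: take the short exact sequence $0 \to (\ker\mathcal{L}_J)^{\bu} \to \Omega^{\bu} \xrightarrow{\mathcal{L}_J} (\im\mathcal{L}_J)^{\bu} \to 0$ and look at the induced long exact sequence. But you then take a wrong turn that sends you into unnecessary complications.

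The error is your claim that ``$\phi^k$ injective and $\phi^{k-1}$ surjective is equivalent, by exactness, to the connecting map $\delta^{k-1}$ being zero.'' That is false: $\delta^{k-1}=0$ is equivalent only to $\phi^k$ injective. Surjectivity of $\phi^{k-1}$ is equivalent to the \emph{other} map $H^{k-1}_{\DR}\to H^{k-1}(Q^{\bu})$ vanishing. By exactness at the middle term, the conjunction of both is equivalent to the single condition $H^{k-1}(Q^{\bu})=0$. This is what the paper uses, and it is the observation you are missing.

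Once you see that the target is the \emph{vanishing of the cohomology group itself} rather than of the connecting map, the proof is essentially over: since $\dd\mathcal{L}_J=-\mathcal{L}_J\dd$, the map $\mathcal{L}_J$ identifies $Q^{j}$ with $(\im\mathcal{L}_J)^{j+1}$ as complexes under $\dd$, and one reads off directly
\[
H^{k-1}(Q^{\bu}) \;=\; \frac{(\ker\dd)^k\cap(\im\mathcal{L}_J)^k}{\dd\big((\im\mathcal{L}_J)^{k-1}\big)} \;=\; \frac{(\ker\dd)^k\cap(\im\mathcal{L}_J)^k}{(\im\dd\mathcal{L}_J)^k},
\]
which is precisely the quotient in Definition~\ref{defn:dLJ}. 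That is the entire proof in the paper.

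Consequently, all the difficulties you anticipate are phantoms. You do not need to unwind $\delta$, do not need the operator $P$ or any bigrading-free substitute for it (Lemma~\ref{lemma:ddcprelim} plays no role in the proof of Theorem~\ref{thm:main2}; it only served earlier to reconcile Definitions~\ref{defn:ddc} and~\ref{defn:dLJ} in the integrable case), and do not need to worry about $(\mathcal{L}_J)^2\neq 0$: the only fact required is that $(\im\mathcal{L}_J)^{\bu}$ is a subcomplex for $\dd$, which follows from $\dd\mathcal{L}_J=-\mathcal{L}_J\dd$ regardless of integrability.
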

\begin{proof}
The short exact sequence of chain complexes
\begin{equation*}
0 \to (\ker \mathcal{L}_J)^{\bu} \xrightarrow{\phi^{\bu}} \Omega^{\bu} \xrightarrow{\mathcal{L}_J} (\im \mathcal{L}_J)^{\bu \textcolor{red}{+1}} \to 0
\end{equation*}
induces a long exact sequence 
\begin{equation*}
\cdots \to H^{k-1}_J \xrightarrow {\phi^{k-1}} H^{k-1}_{\DR} \to H^{\textcolor{red}{k}} \big( (\im \mathcal{L}_J)^{\bu} \big) \to H^k_J \xrightarrow {\phi^k} H^k_{\DR} \to \cdots
\end{equation*}
in cohomology. By exactness we deduce that
\begin{equation*}
(\phi^{k-1} \text{ is surjective and } \phi^k \text{ is injective}) \iff H^{\textcolor{red}{k}} \big( (\im \mathcal{L}_J)^{\bu} \big) = 0.
\end{equation*}
Thus by Definition~\ref{defn:dLJ}, the proof would be complete if we can show that
\begin{equation} \label{eq:main2temp}
H^{\textcolor{red}{k}} \big( (\im \mathcal{L}_J)^{\bu} \big) = \frac{(\ker \dd)^k \cap (\im \mathcal{L}_J)^k}{(\im \dd \mathcal{L}_J)^k}.
\end{equation}
But this follows directly from the definition, because
\begin{equation*}
H^{\textcolor{red}{k}} \big( (\im \mathcal{L}_J)^{\bu} \big) = \frac{ (\ker \dd)^k \cap (\im \mathcal{L}_J )^k }{\dd ( (\im \mathcal{L}_J)^{k-1})} = \frac{ (\ker \dd)^k \cap (\im \mathcal{L}_J )^k }{(\im \dd \mathcal{L}_J)^k}.
\end{equation*}
\end{proof}

\subsection{Examples of the applications of Theorems~\ref{thm:main} and~\ref{thm:main2}} \label{sec:mainex}

In this section we discuss in detail three explicit examples where we apply Theorem~\ref{thm:main} and the more general Theorem~\ref{thm:main2}. The examples serve both to illustrate that the $J$-cohomology can in some cases be explicitly computed, and that it can be used to establish the non-validity of the $\partial \bar \partial$-lemma in the integrable case or more generally the non-validity of the $\dd \mathcal{L}_J$-lemma in the non-integrable case.

The first two examples we consider are both integrable: the Hopf manifolds and the Iwasawa manifold. It is well-known~\cite{Angella, Angella2, Huybrechts} that these complex manifolds do not satisfy the $\partial \bar \partial$-lemma. The existing proofs of these facts that the authors were aware of in the literature either involve
\begin{itemize}
\item the computation of the Bott-Chern or Aeppli cohomologies~\cite{Angella} (see Remark~\ref{rmk:BCA}), or
\item the explicit demonstration that the Massey products are nontrivial~\cite{Huybrechts} (these products must vanish for any compact complex manifold for which the $\partial \bar \partial$-lemma holds.)
\end{itemize}
By contrast, our demonstration of the failure of the $\partial \bar \partial$-lemma to hold for the Hopf manifolds and the Iwasawa manifold in both cases is by establishing that $H^1_J \not \equiv H^1_{\DR}$ and is very direct and straightforward.

At the end of this section we also consider an explicit example of a \emph{non-integrable} almost complex manifold $(T^4, J)$ that does not satisfy the $\dd \mathcal{L}_J$-lemma, which is again established by the explicit computation of $H^1_J$.

\begin{ex} \label{ex:Hopf}
Consider the quotient $P$ of $\C^2 \setminus \{ 0 \}$ by the $\Z$ action generated by $z \mapsto 2 z$. The space $P$ is a \emph{compact complex manifold}, called the Hopf surface. We will show that $H^1_J (P) = 0$.

Suppose $\alpha = f_1 \dd z_1 + f_2 \dd z_2 + \bar{f}_1 \dd \bar{z}_1 + \bar{f}_2 \dd \bar{z}_2 \in H^1_J (P)$. By~\eqref{eq:JcohomDR} and~\eqref{eq:HJ1integrable} we have $\partial \alpha = \bar \partial \alpha = 0$. We can also consider $\alpha$ as a $1$-form on $\C^2 \setminus \{ 0 \}$ that is invariant under the action $\lambda(z) = 2 z$. That is, $\lambda^* \alpha = \alpha$, which is equivalent to the equations
\begin{equation} \label{eq:Hopfinv}
2 f_1(2z) = f_1(z), \qquad 2 f_2(2z) = f_2(z).
\end{equation}
The fact that $\bar{\partial} \alpha = 0$ says that $f_1$ and $f_2$ are holomorphic functions on $\C^2 \setminus \{ 0 \}$. Suppose that $f_i (z) \neq 0$ for some $z$. Then by~\eqref{eq:Hopfinv} we have
\begin{equation*}
|f(0)| = \lim_{n \to \infty} \left| f \left( \frac{z}{2^n} \right) \right| = \lim_{n \to \infty} 2^n |f(z)| = \infty.
\end{equation*}
But this contradicts Hartog's extension theorem, which says that we can extend each $f_i$ to a holomorphic function on the whole $\C^2$. Thus we must have $f_i \equiv 0$. Therefore $\alpha = 0$ and hence $H^1_J (P) = 0$.
\end{ex}

\begin{rmk} \label{rmk:Hopf}
A similar argument performed along eigenvectors shows that $H^1_J (P) = 0$ holds for a general Hopf manifold $P$ obtained by quotienting $\C^n \setminus \{ 0 \}$ by the $\Z$-action generated by a linear dilation $\lambda(z) = Az$. Since $H^1_{J} (P) = 0$ for all these, and $P \cong S^1 \times S^{2n-1}$ has $H^1_{\DR} (P) = \R$, we deduce from  Theorem~\ref{thm:main} that none of the Hopf manifolds satisfy the $\partial \bar{\partial}$-lemma.
\end{rmk}

\begin{ex} \label{ex:Iwasawa}
The Iwasawa manifold $W$ is defined to be the quotient of the set
\begin{equation*}
\left \{ \begin{pmatrix} 1 & z_1 & z_2 \\ 0 & 1 & z_3 \\ 0 & 0 & 1 \end{pmatrix} : z_1, z_2, z_3 \in \C \right \} \cong \C^3
\end{equation*}
by the left action of the group
\begin{equation*}
\left \{ \begin{pmatrix} 1 & a & b \\ 0 & 1 & c \\ 0 & 0 & 1 \end{pmatrix} : a, b, c \in \Z [i] \right \}.
\end{equation*}
We will compute $H^1_J (W)$. Suppose $\alpha = f_1 \dd z_1 + f_2 \dd z_2 + f_3 \dd z_3 + \bar{f}_1 \dd \bar{z}_1 + \bar{f}_2 \dd \bar{z}_2 + \bar{f}_3 \dd \bar{z}_3 \in H^1_J (W)$. As in Example~\ref{ex:Hopf}, consider $\alpha$ as a $1$-form on $\C^3$ that is invariant under the covering transformations. From the equation
\begin{equation*}
\begin{pmatrix} 1 & a & b \\ 0 & 1 & c \\ 0 & 0 & 1 \end{pmatrix} \begin{pmatrix} 1 & z_1 & z_2 \\ 0 & 1 & z_3 \\ 0 & 0 & 1 \end{pmatrix} = \begin{pmatrix} 1 & z_1 + a & z_2 + b + a z_3 \\ 0 & 1 & z_3 + c \\ 0 & 0 & 1 \end{pmatrix}
\end{equation*}
we find that the invariance of $\alpha$ under the covering transformations is equivalent to the system of equations
\begin{equation} \label{eq:Iwasawa}
\begin{aligned}
f_1(z_1 + a, z_2 + a z_3, z_3) & = f_1(z_1, z_2, z_3), \\
f_2(z_1 + a, z_2 + a z_3, z_3) & = f_2(z_1, z_2, z_3), \\
f_2(z_1 + a, z_2 + a z_3, z_3) + f_3(z_1 + a, z_2 + a z_3, z_3) & = f_3(z_1, z_2, z_3), \\
f_i(z_1, z_2 + b, z_3) & = f_i(z_1, z_2, z_3), \quad \text{for } i=1, 2, 3, \\
f_i(z_1, z_2, z_3 + c) & = f_i(z_1, z_2, z_3), \quad \text{for } i=1, 2, 3,
\end{aligned}
\end{equation}
for all $a,b,c \in \mathbb{Z}[i]$. Also as in Example~\ref{ex:Hopf}, by~\eqref{eq:JcohomDR} and~\eqref{eq:HJ1integrable} we deduce that the $f_i$ are holomorphic functions on $\C^3$. The last two equations in~\eqref{eq:Iwasawa} state that the $f_i$ are doubly periodic, with periods $1$ and $i$, in the variables $z_2$, $z_3$, so by Liouville's Theorem each $f_i$ must be constant in those two variables. By an abuse of notation, we write $f_i (z_1, z_2, z_3) = f_i(z_1)$, and the system~\eqref{eq:Iwasawa} becomes
\begin{equation} \label{eq:Iwasawab}
\begin{aligned}
f_1(z_1 + a) & = f_1(z_1), \\
f_2(z_1 + a) & = f_2(z_1), \\
f_2(z_1 + a) + f_3(z_1 + a) & = f_3(z_1).
\end{aligned}
\end{equation}
The first two equations in~\eqref{eq:Iwasawab} say that $f_1$, $f_2$ are doubly periodic in $z_1$ as well, so they are in fact constant functions. If we write $f_2 = c$, then the last equation in~\eqref{eq:Iwasawab} becomes $c + f_3(z_1 + a) = f_3(z_1)$ for all $a \in \Z[i]$, which implies that $c = f_2 =0$ and thus that $f_3$ is also constant.

We have therefore shown that $H^1_J (W) = \mathrm{span} \{ \dd x_1, \dd x_3 \}$, because recall that we only deal with real forms. But $\dim H^1_{\DR}(W) = 4$, as can be seen, for example, in~\cite[Appendix A]{Angella2}). Hence $H^1_J (W) \not \equiv H^1_{\DR} (W)$ and the Iwasawa manifold does not satisfy the $\partial \bar{\partial}$-lemma by Theorem~\ref{thm:main}.
\end{ex}

\begin{ex} \label{ex:T4}
Consider the $4$-dimensional torus $T^4 = \R^4 / \Z^4$ with the following \emph{nonstandard} almost complex structure
\begin{equation*}
J = \begin{pmatrix} 0 & 1 & f & -g \\ -1 & 0 & g & f \\ 0 & 0 & 0 & -1 \\ 0 & 0 & 1 & 0 \end{pmatrix}
\end{equation*}
where $f(x_1, x_2, x_3,x_4)$ and $g(x_1, x_2, x_3, x_4)$ are functions which are periodic in each coordinate with period 1. It is trivial to verify that $J^2 = -I$. Define
\begin{equation} \label{eq:AB}
A = f_{x_2} + g_{x_1}, \qquad B = f_{x_1} - g_{x_2}. 
\end{equation}
A computation using~\eqref{eq:Nclassical} yields
\begin{equation} \label{eq:T4Nzero}
\begin{aligned}
& N_{12} = 0, \qquad N_{13} = N_{24} = A \partial_{x_1} - B \partial_{x_2}, \qquad N_{14} = - N_{23} = B \partial_{x_1} + A \partial_{x_2}, \\
& N_{34} = (fA - gB) \partial_{x_1} - (fB + gA) \partial_{x_2},
\end{aligned}
\end{equation}
where $N_{ij} = N(e_i, e_j)$. In particular, we deduce that this $J$ is integrable if and only if $A = B = 0$. But these conditions clearly imply that $f_{x_1 x_1} + f_{x_2 x_2} = g_{x_1 x_1} + g_{x_2 x_2} = 0$, so $f$ and $g$ must be harmonic in $x_1$, $x_2$, and by compactness $f$, $g$ must be constant in $x_1$, $x_2$. That is, we conclude that
\begin{equation} \label{eq:T4integrable}
\text{this $J$ is integrable if and only if $f$, $g$ are constant in $x_1$, $x_2$.}
\end{equation}

Now let $\alpha \in H^1_J (T^4) = (\ker \mathcal{L}_J)^1 \cap (\ker \dd)^1$. Then $\dd \alpha = 0$, so since $H^1_{\DR} (\R^4) = 0$, there exists a $u \in C^{\infty} (\R^4)$ such that $\alpha = \dd u$. Note that $u$ need not descend to a function on $T^4$. That is, it need not be $\Z$-periodic in each coordinate, and indeed this will be the case if and only if $[\alpha] = 0$ in $H^1_{\DR} (T^4)$. Since $\alpha \in \ker \mathcal{L}_J$, we have $\mathcal{L_J} \dd u = - \dd \iota_J \dd u = 0$. Thus $\iota_J \dd u$ is a closed $1$-form, so there exists some $v \in C^{\infty} (\R^4)$ such that $\iota_J \dd u = -\dd v$, and again $v$ need not descent to $T^4$. Since $\iota_J = J^T$ on $1$-forms, and $(J^T)^2 = - I$, we can rewrite this equation as $\dd u = J^T \dd v$. Explicitly, we have
\begin{equation} \label{eq:T4}
\begin{pmatrix} u_{x_1} \\ u_{x_2} \\ u_{x_3} \\ u_{x_4} \end{pmatrix} = \begin{pmatrix} 0 & -1 & 0 & 0 \\ 1 & 0 & 0 & 0 \\ f & g & 0 & 1 \\ -g & f & -1 & 0 \end{pmatrix} \begin{pmatrix} v_{x_1} \\ v_{x_2} \\ v_{x_3} \\ v_{x_4} \end{pmatrix} = \begin{pmatrix} -v_{x_2} \\ v_{x_1} \\ f v_{x_1} + g v_{x_2} + v_{x_4} \\ -g v_{x_1} + f v_{x_2} - v_{x_3} \end{pmatrix}.
\end{equation}

We claim that the natural map $H^1_J (T^4) \to H^1_{\DR} (T^4) = \R^4$ is injective. Let $[\alpha] \in H^1_J (T^4)$. If $[\alpha] = 0$ in $H^1_{\DR} (T^4)$, then $\alpha = \dd u$ for some $u \in C^{\infty} (T^4)$. That is, the function $u$ into~\eqref{eq:T4} is $\Z$-periodic in all four variables. The first two equations in~\eqref{eq:T4} show that $u_{x_1 x_1} + u_{x_2 x_2} = 0$, so $u$ is harmonic in the first two coordinates. Therefore, if $u$ must be constant in $x_1$ and $x_2$, and in particular $u_{x_1} = u_{x_2} = v_{x_1} = v_{x_2} = 0$. After substituting this into the last two equations, we find that $u$ is also harmonic in $x_3$ and $x_4$, which proves that $u$ is constant and $\dd u = 0$. Therefore, $\alpha = 0$, and the map $H^1_J (T^4) \to H^1_{\DR} (T^4) = \R^4$ is indeed injective, and hence
\begin{equation} \label{eq:T4inj}
\dim H^1_J (T^4) \leq \dim H^1_{\DR} (T^4) = 4.
\end{equation}
(Note that we cannot just apply Proposition~\ref{prop:cptcplxinj} to derive~\eqref{eq:T4inj} because $J$ need not be integrable.)

Now we solve~\eqref{eq:T4}, without the assumption that $u$ descends to $T^4$. Note that the first two equations in~\eqref{eq:T4} imply that $u_{x_1}$ is harmonic in the first two coordinates, so $u_{x_1} = a(x_3,x_4)$ is constant in $x_1$ and $x_2$. We will show that $a$ is in fact a constant. Note that for any $x_2, x_3, x_4$, we have
\begin{equation*}
a(x_3, x_4) = \int_0^1 a(x_3, a_4) \dd t = \int_0^1 u_{x_1}(t, x_2, x_3, x_4) \dd t = u(1, x_2, x_3, x_4) - u(0, x_2, x_3, x_4).
\end{equation*}
Differentiating the above expression with respect to $x_3$ gives
\begin{equation*}
a_{x_3} (x_3, x_4) = u_{x_3}(1, x_2, x_3, x_4) - u_{x_3}(0, x_2, x_3, x_4).
\end{equation*}
Since $u_{x_3}$ is a coefficient of the $1$-form $\alpha = \dd u$ on $T^4$, it is $\Z$-periodic, and thus the right hand side above vanishes. Hence $a$ is independent of $x_3$ and similarly independent of $x_4$. So indeed we conclude that $u_{x_1} = a$ is a constant function.

Similarly, $u_{x_2}$, $v_{x_1}$, and $v_{x_2}$ are all constant functions. By equality of mixed partials we now deduce that $u_{x_3}$, $u_{x_4}$, $v_{x_3}$, $v_{x_4}$ are all constant in $x_1$, $x_2$. Thus by~\eqref{eq:T4} we find that both $f v_{x_1} + g v_{x_2}$ and $-g v_{x_1} + f v_{x_2}$ must be constant in $x_1$, $x_2$. That is,
\begin{align*}
(f v_{x_1} + g v_{x_2})_{x_1} & = f_{x_1} v_{x_1} + g_{x_1} v_{x_2} = 0, \\
(f v_{x_1} + g v_{x_2})_{x_2} & =  f_{x_2} v_{x_1} + g_{x_2} v_{x_2} = 0, \\
(-g v_{x_1} + f v_{x_2})_{x_1} & = - g_{x_1}  v_{x_1} + f_{x_1} v_{x_2} = 0, \\
(-g v_{x_1} + f v_{x_2})_{x_2} & = - g_{x_2} v_{x_1} + f_{x_2} v_{x_2} = 0, \\
\end{align*}
which can also be written in the following matrix form:
\begin{equation*}
\begin{pmatrix} f_{x_1} & f_{x_2} \\ g_{x_1} & g_{x_2} \\ \end{pmatrix} \begin{pmatrix} v_{x_2} & -v_{x_1} \\ v_{x_1} & v_{x_2} \end{pmatrix} = \begin{pmatrix} 0 & 0 \\ 0 & 0 \end{pmatrix}.
\end{equation*}
Since the determinant of the second factor above is $(v_{x_2})^2 + (v_{x_1})^2$, we conclude that either $v_{x_1} = v_{x_2} = 0$, or else the second factor is invertible and thus the first matrix must be zero, which means that $f$, $g$ are constant in $x_1$, $x_2$.

\emph{Case One:} If $f$, $g$ are not constant in $x_1$, $x_2$, then $v_{x_1} = v_{x_2} = 0$, and in this case the solution set of~\eqref{eq:T4} is
\begin{equation*}
\{ (u_{x_1}, u_{x_2}, u_{x_3}, u_{x_4}) = (0, 0, s, t) : s, t \in \R \}.
\end{equation*}

\emph{Case Two:} If $f,g$ are constant in $x_1$, $x_2$, we claim that~\eqref{eq:T4} is solvable for any values of the two constants $u_{x_1}$, $u_{x_2}$. In fact, for any $u_{x_1}$, $u_{x_2}$, we put $v_{x_1} = u_{x_2}$, $v_{x_2} = -u_{x_1}$ and then solve
\begin{equation} \label{eq:T4b}
u_{x_3 x_3} + u_{x_4 x_4} = F,
\end{equation}
where
\begin{equation} \label{eq:T4F}
F = v_{x_1} f_{x_3} + v_{x_2} f_{x_4} + v_{x_2} g_{x_3} - v_{x_1} g_{x_4}.
\end{equation}
This is because we can substitute any solution $(u_{x_3}, u_{x_4})$ of~\eqref{eq:T4b} into~\eqref{eq:T4} and then solve for $v_{x_3}$, $v_{x_4}$, as we do not require $v$ to descend to $T^4$. From~\eqref{eq:T4F} and the $\Z$-periodicity of $f$, $g$, we see that $\int_{T^4} F \, \dd x_1 \wedge \dd x_2 \wedge \dd x_3 \wedge \dd x_4 = 0$, so by Hodge theory we can write $F = \Delta \phi$, but then again from~\eqref{eq:T4F} and the constantcy of $f$, $g$, in $x_1$, $x_2$, we find that
\begin{equation*}
\Delta \phi_{x_1} = F_{x_1} = 0, \qquad \Delta \phi_{x_2} = F_{x_2} = 0,
\end{equation*} 
so in fact both $\phi_{x_1}$ and $\phi_{x_2}$ are constant functions, and $(u_{x_3}, u_{x_4}) = (\phi_{x_3}, \phi_{x_4})$ is a solution of~\eqref{eq:T4b}. Together with the solutions $(u_{x_1}, u_{x_2}, u_{x_3}, u_{x_4}) = (0, 0, s, t)$, we find that the solution space is at least $4$-dimensional in this case, hence exactly $4$-dimensional by~\eqref{eq:T4inj}.

In summary, we have concluded that
\begin{equation*}
\dim H^1_J (T^4) = \begin{cases} 4 & \text{if $f$, $g$ are constant in $x_1$, $x_2$ }, \\ 2 & \text{otherwise}. \end{cases}
\end{equation*}
By~\eqref{eq:T4integrable} the first case occurs if and only if $J$ is integrable, which agrees with Theorem~\ref{thm:main}. Note also by Theorem~\ref{thm:main2} that $(T^4, J)$ \emph{does not} satisfy the $\dd \mathcal{L}_J$-lemma in the nonintegrable case, because $H^1_J \not \equiv H^1_{\DR}$.
\end{ex}

\subsection{Variations of Theorem~\ref{thm:main}}

In this section, we will consider some variations of Theorem~\ref{thm:main}.

Let $(M, J)$ be a complex manifold. We claim there is a natural linear map $\psi^k : H^k_J \to H^k_{\bar \partial}$, where $H^k_{\bar \partial} = (\ker \bar \partial)^k / (\im \bar \partial)^k$. For $[\alpha] \in H^k_J$, any representative $\alpha$ is both $\dd$-closed and $\mathcal{L}_J$-closed, and thus by~\eqref{eq:HJ1integrable} is $\bar \partial$-closed. We define $\psi^k [\alpha]$ to be the class of $\alpha$ in $H^k_{\bar \partial}$. We need to check that this is well-defined. If $[\alpha] = [\alpha'] \in H^k_J$, then $\alpha = \alpha' + \dd \beta$ for some $\beta \in \ker \mathcal{L}_J$. By~\eqref{eq:LJ} we have $\partial \beta = \bar \partial \beta$, so $\dd \beta = 2 \bar \partial \beta$, and hence $[\alpha] = [\alpha'] \in H^k_{\bar \partial}$.

\begin{thm} \label{thm:maincplx}
Let $M$ be a complex manifold, and let $\psi^{\bu}: H_J^{\bu} \to H_{\bar \partial}^{\bu}$ be the natural map. Then $M$ satisfies the $\partial \bar{\partial}$-lemma in degree $k$ if and only if $\psi^k$ is injective and $\psi^{k-1}$ is surjective.
\end{thm}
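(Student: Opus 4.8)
The plan is to adapt the proof of Theorem~\ref{thm:main2}, with $H^{\bu}_{\DR}$ replaced by $H^{\bu}_{\bar{\partial}}$ and with the differential $\bar{\partial}$ in place of $\dd$ on the ambient complex. We may work with complex coefficients throughout, since $\psi^j$ is injective (resp.\ surjective) if and only if its $\C$-linear extension is. The one observation that makes everything go is this: by~\eqref{eq:LJ} we have $\mathcal{L}_J = i(\partial-\bar{\partial})$, so every $\alpha \in (\ker\mathcal{L}_J)^{\bu}$ satisfies $\partial\alpha = \bar{\partial}\alpha$ and hence $\dd\alpha = 2\bar{\partial}\alpha$. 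Since $\dd$ preserves $(\ker\mathcal{L}_J)^{\bu}$ (Lemma~\ref{lemma:commutators}), so does $\bar{\partial}$, and on this subcomplex $\bar{\partial}$ has the same kernel and image as $\dd$; thus $\big((\ker\mathcal{L}_J)^{\bu},\bar{\partial}\big)$ again computes $H^{\bu}_J$, and the inclusion of subcomplexes $\big((\ker\mathcal{L}_J)^{\bu},\bar{\partial}\big) \hookrightarrow \big(\Omega^{\bu}\otimes\C,\bar{\partial}\big)$ induces exactly the map $\psi^{\bu}$.

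From the short exact sequence of complexes $0 \to (\ker\mathcal{L}_J)^{\bu} \to \big(\Omega^{\bu}\otimes\C,\bar{\partial}\big) \to Q^{\bu} \to 0$, where $Q^{\bu} := \big(\Omega^{\bu}\otimes\C\big)/(\ker\mathcal{L}_J)^{\bu}$ carries the induced differential $\bar{\partial}$, I would extract the long exact sequence
\begin{equation*}
\cdots \to H^{k-1}_J \xrightarrow{\;\psi^{k-1}\;} H^{k-1}_{\bar{\partial}} \to H^{k-1}(Q^{\bu}) \to H^k_J \xrightarrow{\;\psi^k\;} H^k_{\bar{\partial}} \to \cdots
\end{equation*}
and conclude, exactly as in Theorem~\ref{thm:main2}, that $\psi^k$ is injective and $\psi^{k-1}$ is surjective if and only if $H^{k-1}(Q^{\bu}) = 0$.

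The heart of the argument is then to identify $H^{k-1}(Q^{\bu})$. A class $[\beta] \in Q^{k-1}$ is a $\bar{\partial}$-cocycle precisely when $\bar{\partial}\beta \in (\ker\mathcal{L}_J)^k$, i.e.\ (since $\bar{\partial}\beta$ is automatically $\bar{\partial}$-closed) precisely when $\partial\bar{\partial}\beta = 0$; dividing out coboundaries gives $H^{k-1}(Q^{\bu}) = \{\beta \in \Omega^{k-1}\otimes\C : \partial\bar{\partial}\beta = 0\}\big/\big((\im\bar{\partial})^{k-1} + (\ker\mathcal{L}_J)^{k-1}\big)$. I claim $[\beta] \mapsto [\mathcal{L}_J\beta]$ defines an isomorphism of this space onto $(\im\mathcal{L}_J)^k \cap (\ker\dd)^k \big/ (\im\dd\mathcal{L}_J)^k$. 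Well-definedness and surjectivity follow from the identities $\dd\mathcal{L}_J\beta = -\mathcal{L}_J\dd\beta = -2i\,\partial\bar{\partial}\beta$ (so that $\partial\bar{\partial}\beta = 0 \Leftrightarrow \dd\mathcal{L}_J\beta = 0$), $\mathcal{L}_J\bar{\partial}\gamma = -\tfrac12\dd\mathcal{L}_J\gamma$, and $\mathcal{L}_J\big((\ker\mathcal{L}_J)^{k-1}\big) = 0$. Injectivity is the only subtle step: if $\mathcal{L}_J\beta = \dd\mathcal{L}_J\eta$, then $\mathcal{L}_J(\beta+\dd\eta)=0$, and one rewrites $\dd\eta = 2\bar{\partial}\eta - i\,\mathcal{L}_J\eta$, whose first summand lies in $(\im\bar{\partial})^{k-1}$ and whose second lies in $(\ker\mathcal{L}_J)^{k-1}$ because $(\mathcal{L}_J)^2 = -\mathcal{L}_N = 0$ in the integrable case; hence $\beta \in (\im\bar{\partial})^{k-1} + (\ker\mathcal{L}_J)^{k-1}$, i.e.\ $[\beta]=0$.

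Finally, Lemma~\ref{lemma:ddcprelim} gives an isomorphism $(\im\mathcal{L}_J)^k \cap (\ker\dd)^k \big/(\im\dd\mathcal{L}_J)^k \cong (\im\dd)^k \cap (\ker\mathcal{L}_J)^k\big/(\im\dd\mathcal{L}_J)^k$, and by Definition~\ref{defn:ddc} the latter vanishes if and only if the $\dd\dc$-lemma holds in degree $k$, which (as recalled just before Definition~\ref{defn:ddc}) is equivalent to the $\partial\bar{\partial}$-lemma in degree $k$. Combining this with the long exact sequence above finishes the proof. I expect the main obstacle to be the identification in the third paragraph — and within it specifically the injectivity of $[\beta]\mapsto[\mathcal{L}_J\beta]$, which requires decomposing $\dd$ into its $\bar{\partial}$- and $\mathcal{L}_J$-parts and using $(\mathcal{L}_J)^2 = 0$; everything else is formal homological algebra plus the $\partial,\bar{\partial} \leftrightarrow \dd,\mathcal{L}_J$ dictionary already in Section~\ref{sec:del-delbar}.
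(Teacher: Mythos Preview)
Your proposal is correct and follows essentially the same approach as the paper. The paper's proof is a one-liner: since $\dd = 2\bar\partial$ on $(\ker\mathcal{L}_J)^{\bu}$, the argument of Theorem~\ref{thm:main2} carries over verbatim with $\bar\partial$ in place of $\dd$; the only cosmetic difference is that Theorem~\ref{thm:main2} uses $\mathcal{L}_J$ itself as the quotient map in the short exact sequence (so the third complex is $(\im\mathcal{L}_J)^{\bu}$ rather than your $Q^{\bu}$), which makes your third-paragraph identification $H^{k-1}(Q^{\bu}) \cong (\im\mathcal{L}_J)^k\cap(\ker\dd)^k\big/(\im\dd\mathcal{L}_J)^k$ automatic and spares you the injectivity check.
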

\begin{proof}
It follows from~\eqref{eq:LJ} that when restricted to $(\ker \mathcal{L}_J)^{\bu}$, the maps $\dd$ and $\bar \partial$ are equal modulo a factor, so they give rise to the same cohomology. Hence the proof of Theorem~\ref{thm:maincplx} is identical to that of Theorem~\ref{thm:main2}.
\end{proof}

Similarly, we can prove the following corollary in the same way that we prove Theorem~\ref{thm:main}.

\begin{cor} \label{cor:mainpsi}
The complex manifold $M$ satisfies the $\partial \bar{\partial}$-lemma (in all degrees) if and only if $\psi^k$ is an isomorphism for all $k$.
\end{cor}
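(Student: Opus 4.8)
The plan is to reduce this corollary to the same long exact sequence argument that proves Theorem~\ref{thm:main2}, but with the $\bar\partial$-complex replacing the de Rham complex. Recall that Theorem~\ref{thm:main} asserts that the $\partial\bar\partial$-lemma holds in all degrees if and only if $\phi^k : H^k_J \to H^k_{\DR}$ is an isomorphism for all $k$; the deduction there is that the ``if and only if'' in each degree (injectivity of $\phi^k$ and surjectivity of $\phi^{k-1}$) can be bootstrapped into a statement about all degrees simultaneously, since if every $\phi^k$ is injective and every $\phi^{k-1}$ is surjective then every $\phi^k$ is in fact an isomorphism. I would replay exactly that bootstrapping, but using Theorem~\ref{thm:maincplx} in place of Theorem~\ref{thm:main2}.

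First I would invoke Theorem~\ref{thm:maincplx}: $M$ satisfies the $\partial\bar\partial$-lemma in degree $k$ if and only if $\psi^k$ is injective and $\psi^{k-1}$ is surjective. Hence $M$ satisfies the $\partial\bar\partial$-lemma in all degrees if and only if $\psi^k$ is injective for all $k$ \emph{and} $\psi^{k-1}$ is surjective for all $k$, i.e.\ if and only if every $\psi^k$ is injective and every $\psi^k$ is surjective (the index shift in ``$\psi^{k-1}$ surjective for all $k$'' is cosmetic since $k$ ranges over all relevant degrees, with the boundary cases $\psi^{-1}$ and $\psi^{2m+1}$ being trivially maps out of or into the zero space). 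This is precisely the statement that $\psi^k$ is an isomorphism for all $k$.

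The only point requiring a little care is the behaviour at the two ends of the range, $k=0$ and $k=2m$, to make sure the ``for all $k$'' quantifiers line up. For $k=0$ the map $\psi^{-1}$ is the zero map $0 \to H^0_{\bar\partial}$, whose surjectivity is not automatic; but the $\partial\bar\partial$-lemma in degree $0$ is vacuous (there are no $\partial\bar\partial$-exact $0$-forms other than $0$, and a $\dd$-exact $0$-form is $0$), and correspondingly $\psi^0 : H^0_J \to H^0_{\bar\partial}$ is an isomorphism by Lemma~\ref{lemma:JcohomDR} together with $H^0_{\bar\partial} = \ker(\bar\partial : \Omega^0\otimes\C \to \Omega^{0,1})$, so the claimed equivalence holds trivially in that degree. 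The analogous remark handles the top degree. I do not anticipate a genuine obstacle here: the entire content is already carried by Theorem~\ref{thm:maincplx}, and the corollary is a purely formal consequence of quantifying that theorem over all $k$, exactly parallel to how Theorem~\ref{thm:main} follows from Theorem~\ref{thm:main2}. The ``hard part,'' such as it is, is merely the bookkeeping of the degree shift and the boundary degrees, which is routine.
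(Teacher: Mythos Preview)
Your core argument is correct and is exactly the paper's intended approach: the corollary is a purely formal consequence of quantifying Theorem~\ref{thm:maincplx} over all $k$, parallel to how Theorem~\ref{thm:main} follows from Theorem~\ref{thm:main2}.

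Two small slips in your boundary discussion, neither of which damages the main argument. First, $\psi^{-1}$ is a map $H^{-1}_J \to H^{-1}_{\bar\partial}$, and both of these are zero since $\Omega^{-1} = 0$; so its surjectivity \emph{is} automatic, and your worry is misplaced. Second, your claim that $\psi^0 : H^0_J \to H^0_{\bar\partial}$ is always an isomorphism is false: $H^0_J = (\ker \dd)^0$ consists of locally constant functions, while $H^0_{\bar\partial} = (\ker \bar\partial)^0$ consists of holomorphic functions, and these differ on non-compact $M$ (e.g.\ $M = \C$). Fortunately you do not need this claim: in the forward direction $\psi^0$ being an isomorphism follows from the hypothesis via Theorem~\ref{thm:maincplx}, and in the reverse direction it is assumed. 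Your correct observation that the $\partial\bar\partial$-lemma in degree $0$ is vacuous already handles the boundary, so you can simply delete the erroneous sentence about $\psi^0$.
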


Combining Corollary~\ref{cor:mainpsi} and Theorem~\ref{thm:main}, we obtain a proof of the following result.

\begin{cor} \label{cor:maincong}
If the complex manifold $M$ satisfies the $\partial \bar{\partial}$-lemma in all degrees, then $H^k_{\DR} (M) \cong H^k_{\bar \partial} (M)$ for all $k$.
\end{cor}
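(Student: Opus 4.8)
The plan is to simply chain together the two characterizations of the $\partial\bar\partial$-lemma that have already been established, namely Theorem~\ref{thm:main} and Corollary~\ref{cor:mainpsi}, using the $J$-cohomology $H^{\bu}_J$ as an intermediary. The point is that both $H^{\bu}_{\DR}$ and $H^{\bu}_{\bar\partial}$ are ``seen'' by $H^{\bu}_J$ through the natural maps $\phi^k$ and $\psi^k$, and the $\partial\bar\partial$-lemma forces both of these maps to be isomorphisms.

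First I would observe that if $M$ satisfies the $\partial\bar\partial$-lemma in all degrees, then in particular it satisfies it in degree $k$ for each fixed $k$; hence Theorem~\ref{thm:main} applies and tells us that the natural map $\phi^k : H^k_J(M) \to H^k_{\DR}(M)$ is an isomorphism for every $k$. Next, again using the hypothesis that the $\partial\bar\partial$-lemma holds in all degrees, Corollary~\ref{cor:mainpsi} gives that the natural map $\psi^k : H^k_J(M) \to H^k_{\bar\partial}(M)$ is an isomorphism for every $k$.

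Finally I would form the composite $\psi^k \circ (\phi^k)^{-1} : H^k_{\DR}(M) \to H^k_{\bar\partial}(M)$, which is an isomorphism for all $k$ as a composite of isomorphisms. This exhibits the desired isomorphism $H^k_{\DR}(M) \cong H^k_{\bar\partial}(M)$ and completes the proof.

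I do not expect any genuine obstacle here: all of the analytic and homological content has already been absorbed into Theorem~\ref{thm:main} and Corollary~\ref{cor:mainpsi}, and what remains is only the bookkeeping of composing the two families of isomorphisms through $H^{\bu}_J$. The only point worth stating carefully is that ``the $\partial\bar\partial$-lemma in all degrees'' is exactly the hypothesis needed to invoke both results simultaneously, so that the two isomorphisms are available in every degree $k$ at once.
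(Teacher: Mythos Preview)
Your proposal is correct and is precisely the paper's argument: the paper simply states that combining Corollary~\ref{cor:mainpsi} and Theorem~\ref{thm:main} yields the result, which is exactly your composition $\psi^k \circ (\phi^k)^{-1}$ through $H^k_J$.
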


In fact, from our proof of Theorem~\ref{thm:main}, we in fact get the following more general result.

\begin{thm} \label{thm:main3}
Let $C^{\bu}$ be a chain complex with differential $\dd$, and $L : C^{\bu} \to C^{\bu}[s]$ be a chain map for some integer $s$, where $C^{\bu}[s]$ denotes the $s$-shifted chain complex of $C^{\bu}$. Then the following two statements are equivalent:
\begin{enumerate}[i)] \setlength\itemsep{-1mm}
\item For every $\alpha \in C^{\bu}$, we have $\alpha \in (\ker \dd) \cap  (\im L) \iff \alpha \in (\im \dd L)$,
\item The natural map $H ((\ker L)^{\bu}) \to H(C^{\bu})$ is an isomorphism.
\end{enumerate}
\end{thm}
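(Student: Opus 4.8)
The plan is to prove Theorem~\ref{thm:main3} as the abstract skeleton of the proof of Theorem~\ref{thm:main2}: first reduce condition (i) to the vanishing of the cohomology of the subcomplex $(\im L)^\bu$, then exhibit a short exact sequence of complexes whose long exact sequence in cohomology relates that vanishing directly to statement (ii).

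First I would dispose of the trivial half of (i). Since $\dd^2=0$ and $L$ is a chain map (so $\dd L = \pm L\dd$), every element of $\im \dd L$ is both $\dd$-closed and in $\im L$; hence the inclusion $(\im \dd L) \subseteq (\ker \dd)\cap(\im L)$ holds in every degree, and condition (i) is equivalent to the reverse inclusion holding in every degree. Next I would observe that, $L$ being a chain map, $\ker L$ and $\im L$ are $\dd$-stable subspaces of $C^\bu$, hence subcomplexes, and that in the index conventions of the introduction one has $\dd\big((\im L)^{k-1}\big) = (\im \dd L)^k$. Therefore
\[
H^k\big((\im L)^\bu\big) = \frac{(\ker \dd)^k \cap (\im L)^k}{(\im \dd L)^k},
\]
so condition (i) says precisely that $H^\bu\big((\im L)^\bu\big) = 0$ in all degrees.

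Then I would set up the short exact sequence of complexes
\[
0 \to (\ker L)^\bu \hookrightarrow C^\bu \xrightarrow{L} (\im L)^\bu \to 0,
\]
exactly as in the proof of Theorem~\ref{thm:main2} (the surjection $L$ carries the degree shift $s$, which does no harm). Passing to the associated long exact sequence in cohomology, the groups $H^\bu\big((\ker L)^\bu\big)$, $H^\bu(C^\bu)$, and $H^\bu\big((\im L)^\bu\big)$ are interleaved by connecting homomorphisms, and the map $H^\bu\big((\ker L)^\bu\big) \to H^\bu(C^\bu)$ occurring there is the natural map of statement (ii). It then suffices to invoke the standard homological fact that for a short exact sequence $0 \to A^\bu \to B^\bu \to C^\bu \to 0$ of complexes the induced map $H(A)\to H(B)$ is an isomorphism in all degrees if and only if $H(C)=0$ in all degrees: the ``if'' direction is immediate from the long exact sequence; for ``only if'', an isomorphism $H(A)\to H(B)$ forces, by exactness, the map $H(B)\to H(C)$ to vanish and the connecting map $H(C)\to H(A)$ to be injective, while injectivity of $H(A)\to H(B)$ forces that connecting map to vanish, so $H(C)=0$. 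Applying this with $A^\bu = (\ker L)^\bu$, $B^\bu = C^\bu$, $C^\bu = (\im L)^\bu$, and combining with the first step, yields the equivalence of (i) and (ii).

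I do not expect any genuine obstacle: Theorem~\ref{thm:main3} is just the categorical core of Theorem~\ref{thm:main2}, with $\Omega^\bu$ replaced by an abstract complex and $\mathcal{L}_J$ by $L$. The only points needing a little care are checking that ``chain map'' indeed yields $\dd L = \pm L\dd$ (so that $\ker L$ and $\im L$ are honest subcomplexes and the identity $\dd((\im L)^{k-1}) = (\im \dd L)^k$ holds), and keeping track of the degree shift $s$ when writing out the long exact sequence --- though, since we only want isomorphisms in all degrees simultaneously, the precise value of $s$ is irrelevant to the conclusion.
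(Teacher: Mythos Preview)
Your proposal is correct and follows exactly the approach the paper intends: the paper does not give a separate proof of Theorem~\ref{thm:main3} but simply remarks that it follows from the proof of Theorem~\ref{thm:main2}, and you have faithfully extracted that argument (the short exact sequence $0 \to (\ker L)^\bu \to C^\bu \to (\im L)^\bu \to 0$, the identification of $H^\bu((\im L)^\bu)$ with the quotients appearing in condition~(i), and the long exact sequence). If anything, you supply more detail than the paper does, in particular the clean statement and verification of the homological fact that $H(A^\bu)\to H(B^\bu)$ is an isomorphism in all degrees if and only if $H(C^\bu)=0$ in all degrees.
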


For example, we can take $C^{\bu}$ to be $\Omega^{\bu}_{\mathrm{cs}}$, the forms with compact support, or $\Omega^{\bu}_{\mathrm{r}}$, the forms which vanish to order $r$ at infinity, and $L$ to be any Nijenhuis-Lie derivation on differential forms.

\section{Future directions} \label{sec:future}

A number of natural directions for future study are immediate. These include the following:
\begin{itemize}
\item Can one establish finite-dimensionality of $H^k_J$ for all $k$ when $(M, J)$ is compact but \emph{non-integrable}? In Theorem~\ref{thm:fd-general} we proved this for $k=0,1,2m$. It is reasonable to believe that it holds more generally. Similarly, for compact $M$, the $N$-cohomology $H^{\bu}_N$ and the twisted $N$-cohomology $\widetilde{H}^{\bu}_N$ may well be finite-dimensional.
\item Are there any more applications of $H^{\bu}_N$, and what applications of $\widetilde{H}^{
\bu}_N$ can be found? More generally, is there a geometric meaning to the $N$-cohomology and the twisted $N$-cohomology? Recall that these cohomologies \emph{only} make sense for non-integrable almost complex structures.
\item A very natural and important class of \emph{non-integrable} almost complex manifolds are the \emph{nearly K\"ahler} manifolds. See~\cite{CaS}  for a survey. These manifolds are equipped with a compatible Hermitian metric, but the associated $(1,1)$-form $\omega$ is \emph{not} closed. It is reasonable that the applications of and geometric meanings of $H^{\bu}_J$, $H^{\bu}_N$, and $\widetilde{H}^{\bu}_N$ may be more rich for compact nearly K\"ahler manifolds.
\item Similar cohomology theories can be studied in the context of manifolds with $\G$-structure. See~\cite{CKT} and the references therein for more details.
\end{itemize}

\end{document}